\newcommand\GL{\operatorname{GL}}
\newcommand\SL{\operatorname{SL}}
\newcommand\SU{\operatorname{SU}}
\newcommand\PU{\operatorname{PU}}
\newcommand\PSU{\operatorname{PSU}}
\newcommand\PSL{\operatorname{PSL}}
\newcommand\FFF{\mu_3}
\newcommand\HH{\mathds{H}}
\newcommand\BB{\mathds{B}}
\newcommand\CC{\mathds{C}}
\newcommand\ZZ{\mathds{Z}}
\newcommand\MM{\mathds{M}}
\newcommand\PP{\mathds{P}}
\newcommand\Res{\operatorname{Res}}
\newcommand\Mod{\operatorname{Mod}}
\newcommand\Aut{\operatorname{Aut}}
\newcommand\End{\operatorname{End}}
\newcommand\Out{\operatorname{Out}}
\newcommand\Sym{\operatorname{Sym}}
\newcommand\Hom{\operatorname{Hom}}
\newcommand\Inn{\operatorname{Inn}}
\newcommand\Ord{\operatorname{Ord}}
\newcommand\Bl{\operatorname{Bl}}
\newcommand\Acal{\mathcal A}
\newcommand\Ocal{\mathcal O}
\newcommand\Scal{\mathcal S}
\newcommand\Mcal{\mathcal M}
\newcommand\Is{\mathscr I}
\newcommand\Bs{\mathscr B}
\newcommand\Cs{\mathscr C}
\newcommand\Ds{\mathscr D}
\newcommand\Ps{\mathscr P}
\newcommand\Us{\mathscr U}
\newcommand\Os{\mathscr O}
\newcommand\Ws{\mathscr W}
\newcommand\Xs{\mathscr X}
\newcommand\Mk{\mathfrak{M}}
\newtheorem{theorem}{Theorem}[section]
\newtheorem{lem}[theorem]{Lemma}  
\newtheorem{thm}[theorem]{Theorem}  
\newtheorem{cor}[theorem]{Corollary}  
\newtheorem{prop}[theorem]{Proposition}
\theoremstyle{definition}
\newtheorem{rmk}[theorem]{Remark}
\begin{document}
\title{Geometry of the Winger Pencil}  
\author[*]{Yunpeng Zi}
\date{}  
\maketitle

\begin{abstract}
We investigate the moduli of genus 10 curves that are endowed with a faithful action of the icosahedral group $\mathcal{A}_5$. We show among other things that this has the structure of a Deligne-Mumford stack whose underlying coarse moduli space essentially consists  of two copies of the pencil of plane sextics that was introduced by Winger in 1924,  but with the unique unstable member (a triple conic) replaced by a smooth non-planar curve.
The orbifold defined by any member has genus zero and comes with 4 orbifold points. We
show that by numbering the  points, we get a fine moduli space whose base is naturally a finite cover of $\Mcal_{0,4}$.
\end{abstract}

\section{Introduction}
\label{intro}
The invariants of the  icosahedral group $\Acal_5$ as a subgroup of $\PU_2$  were determined by F.\ Klein in his book \cite{klein1956lectures}.
As W.M.\ Winger noted, the sextics among them make up a pencil of genus 10 curves that are  invariant under the  $\mathcal{A}_5$-action. He studied this pencil and  exhibited explicit  equations of all its singular members.

The goal of this paper is to establish the modular properties of a modification of this pencil. This will lead us to
the full classification of genus 10 stable curves with a faithful icosahedral action. We show for example  that  this pencil is universal at every non-singular member, when regarded as a family of genus 10 curves with icosahedral action. The pencil itself is not a moduli space, not even in a coarse sense. First of all there is an unstable member (the triple conic),  which we prove is really like a planar shadow of a (nonplanar) nonsingular genus 10 curve $C_K$ with $\mathcal{A}_5$-action: indeed this conic is triply covered by such a curve. Moreover since the outer automorphism group of $\mathcal{A}_5$ has order 2, precomposing the $\mathcal{A}_5$-action with an automorphism of $\mathcal{A}_5$ that is not inner yields another copy of this family. By replacing the unstable member and taking the disjoint union of two copies, we obtain what looks like a complete family of  $\Acal_5$-curves of genus 10 with some stable degenerations.

Unfortunately this family cannot be universal as the curve $C_K$ admits an $\Acal_5$-automorphism cyclic of order three. We can solve this in two ways. One is to  mark the irregular orbits and require the morphisms respect the marking. By doing so, we can get a fine moduli space and gluing all the local universal deformations we get its universal family. The alternative is that we resort to stacks:  by considering all the smooth genus 10 curves with icosahedral action we get a Deligne-Mumford stack with a 'good' coarse moduli space. The following theorem sum up most of our results (we work over the field $\CC$ throughout).

\begin{thm}\label{main1}
  If $C$ is a smooth projective genus 10 curve endowed with a faithful $\Acal_5$-action, then its orbifold has genus zero and four orbifold points, one  of type
  $\mu_5$ and three of type $\mu_2$. If regard a total ordering of the three orbifold points of type $\mu_2$ as part of the data, then for such objects there exists a fine moduli space whose base $\Mcal_{10;4}^{\Acal_5}$ is a non-singular quasi-projective algebraic variety which has two connected components and comes with an action of $\Scal_3$. It has the following properties
  \begin{enumerate}
    \item[(i)] Orbifold formation with respect to the $\Acal_5$-action defines a degree 20 cover $\Mcal_{10;4}^{\Acal_5}\to  \Mcal_{0,4}$ which is equivariant for the obvious $\Scal_3$-action.
    \item[(ii)] The action of  $\Scal_3$ has exactly one irregular orbit in $\Mcal_{10;4}^{\Acal_5}$. The smooth members of the Winger pencil plus the singular member represented by the triple conic are obtained by passing to the $\Scal_3$-orbit space. This orbit space is also the coarse moduli space $\Mcal_{10}^{\Acal_5}$ of smooth projective genus 10 curves endowed with a faithful $\Acal_5$-action
  \end{enumerate}
  Moreover, $\Mcal_{10}^{\Acal_5}$ admits a Deligne-Mumford compactification $\overline{\Mcal}_{10}^{\Acal_5}$, which parametrizes the stable genus
  10 curves with faithful $\Acal_5$-action that are smoothable as $\Acal_5$-curves. The $\Acal_5$-orbifold formation extends the above morphism  to $\overline{\Mcal}_{10}^{\Acal_5}\to \overline{\Mcal}_0(5; 2,2,2)$, where $\overline{\Mcal}_0(5; 2,2,2)$ is defined by allowing two of the three orbifold points of order $2$ to coalesce. This yields the full Winger pencil.
\end{thm}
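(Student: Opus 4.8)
\emph{Overall strategy.} The plan is to describe the genus-$10$ curves carrying a faithful $\Acal_5$-action by their branched covers and to realise the moduli as a Hurwitz space. Since point stabilisers of a faithful finite-group action on a smooth curve are cyclic, the orders $e_i$ of the orbifold points of $C/\Acal_5$ lie in $\{2,3,5\}$, and with $h=g(C/\Acal_5)$ the Riemann--Hurwitz identity $18=60(2h-2)+60\sum_i(1-\tfrac1{e_i})$ rules out $h\ge1$ and, for $h=0$, reduces after clearing denominators to $15a+20b+24c=69$ for the numbers $a,b,c$ of orbifold points of orders $2,3,5$; its unique non-negative solution is $(a,b,c)=(3,0,1)$. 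Hence $C/\Acal_5=\PP^1$ with weights $(5;2,2,2)$; upstairs there is one length-$12$ orbit (stabiliser $\mu_5$) and three length-$30$ orbits (stabiliser $\mu_2$), all other $\Acal_5$-orbits on $C$ being free.

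\emph{The Hurwitz cover.} By the Riemann existence theorem such a curve is the same datum as a $4$-pointed genus-$0$ curve with the $\mu_5$-point distinguished, together with a surjection of $\langle x_1,x_2,x_3,x_4\mid x_1^5=x_2^2=x_3^2=x_4^2=x_1x_2x_3x_4=1\rangle$ onto $\Acal_5$ up to conjugacy; the $\Acal_5$-equivariant deformations of $C$ are governed by $H^1(C,T_C)^{\Acal_5}$, the $1$-dimensional tangent space of this $4$-pointed orbifold, so one is genuinely looking at a pencil. Ordering the three $\mu_2$-points rigidifies the covering datum to a finite set of Nielsen classes, i.e.\ conjugacy classes of generating tuples $(g_1,g_2,g_3,g_4)$ with $g_1$ of order $5$, $g_2,g_3,g_4$ involutions and product $1$. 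A character count of the triples of involutions with a prescribed $5$-cycle product — together with the remark that such a triple automatically generates $\Acal_5$, since a proper overgroup of a $5$-cycle containing an involution is a dihedral group $D_5$, in which a product of three reflections is again a reflection — gives exactly $20$ Nielsen classes, splitting $10+10$ according to which of the two $\Acal_5$-conjugacy classes of $5$-cycles contains $g_1$. Representability of the rigidified moduli functor then produces the non-singular quasi-projective variety $\Mcal_{10;4}^{\Acal_5}$ together with the étale degree-$20$ Hurwitz map to $\Mcal_{0,4}$ (four ordered points, the $\mu_5$-point first). The pure mapping class group of the $4$-punctured sphere fixes the $\Acal_5$-class of $g_1$, which $\Out(\Acal_5)$ reverses, so there are at least two connected components; an explicit braid-orbit computation showing that each of the two $10$-element Nielsen sets is a single orbit then pins this down to exactly two, interchanged by the $\Out(\Acal_5)$-twist.

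\emph{Fineness, the $\Scal_3$-action and Winger's pencil.} An automorphism of a rigidified object commutes with $\Acal_5$ and descends to an automorphism of $\PP^1=C/\Acal_5$ fixing all four ordered orbifold points, hence is the identity there; therefore it is the action of some element of $\Acal_5$, and commuting with the whole $\Acal_5$-action it lies in $Z(\Acal_5)=1$, so it is trivial. Thus the objects are rigid, $\Mcal_{10;4}^{\Acal_5}$ is a fine moduli space, and gluing the local universal deformations produces the universal family. The group $\Scal_3$ acts by re-ordering the $\mu_2$-points, compatibly with the faithful $\Scal_3$-action on $\Mcal_{0,4}$ permuting the last three points (the $\Scal_4$-action on $\Mcal_{0,4}$ factors through $\Scal_4/V_4$, and $\Scal_3\hookrightarrow\Scal_4\to\Scal_4/V_4$ is an isomorphism), and forgetting the ordering identifies $\Mcal_{10;4}^{\Acal_5}/\Scal_3$ with the coarse space $\Mcal_{10}^{\Acal_5}$ of unordered smooth $\Acal_5$-curves of genus $10$. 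To match this with Winger's pencil: the icosahedral group sits in $\Aut(\PP^2)$ via a $3$-dimensional irreducible representation, it fixes every member of the pencil of $\Acal_5$-invariant plane sextics (the triple-conic member being the cube of the invariant conic), and $N_{\Aut(\PP^2)}(\Acal_5)=\Acal_5$ while $Z_{\Aut(\PP^2)}(\Acal_5)=1$; since any isomorphism of smooth plane sextics is a projectivity, an $\Acal_5$-equivariant one must be trivial, so distinct Winger members are pairwise non-isomorphic as $\Acal_5$-curves and the pencil maps isomorphically onto the smooth locus of one component of $\Mcal_{10}^{\Acal_5}$.

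\emph{One irregular orbit, the compactification, and the main obstacle.} A point of $\Mcal_{10;4}^{\Acal_5}$ with non-trivial $\Scal_3$-stabiliser gives an $\Acal_5$-curve $C$ with a non-trivial automorphism $\psi$ centralising $\Acal_5$ and inducing a non-trivial permutation of the three $\mu_2$-orbits, necessarily a transposition or a $3$-cycle. If it is a transposition then $\psi$ descends to an involution of $\PP^1$, so $\psi^2$ descends to the identity and, centralising $\Acal_5$, is a central deck transformation, whence $\psi^2=1$; then $C/\langle\psi\rangle$ carries a faithful $\Acal_5$-action, and since $\Acal_5$ has no faithful action in genus $1,2,3$ we get $g(C/\langle\psi\rangle)\in\{0,4,5\}$, so $\lvert\mathrm{Fix}(\psi)\rvert=22-4g(C/\langle\psi\rangle)\in\{22,6,2\}$; but $\mathrm{Fix}(\psi)$ is a union of $\Acal_5$-orbits whose point-stabilisers are cyclic of odd order — the $\Acal_5\times\langle\psi\rangle$-stabiliser of a point of $C$ is cyclic and $C$ has no $\mu_3$-orbits, so the $\Acal_5$-stabiliser has order $1$ or $5$ — hence of length $12$ or $60$, and none of $22,6,2$ is such a sum, a contradiction. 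So the permutation is a $3$-cycle; one then checks that the resulting central order-$3$ symmetry is realised by a unique curve, the $\Acal_5$-equivariant cyclic triple cover of the icosahedral $\PP^1$ totally ramified over its twelve $\mu_5$-points — this is $C_K$ — so $C_K$ is the only source of extra symmetry and there is exactly one irregular $\Scal_3$-orbit, which gives (ii). For the compactification I would take the space of admissible $\Acal_5$-covers over $\overline{\Mcal}_{0,4}$: smoothness of the Hurwitz stack shows every such cover is smoothable as an $\Acal_5$-curve; the three boundary points of $\overline{\Mcal}_{0,4}$ carry exactly the stable degenerations; and passing to the $\Scal_3$-orbit space turns $\overline{\Mcal}_{0,4}$ into $\overline{\Mcal}_0(5;2,2,2)$ (two of the order-$2$ points allowed to coalesce) and, on the curve side, reproduces Winger's singular members — so one recovers the full Winger pencil, with the triple conic replaced by $C_K$; the extended Winger $\PP^1$ is an injective morphism onto a proper component of $\overline{\Mcal}_{10}^{\Acal_5}$, hence an isomorphism. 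I expect the genuine work to lie in the finite but non-trivial mapping-class-group computation that pins the number of components to exactly two, and in the careful dictionary between this Hurwitz description and Klein's invariant theory — matching the pencil near the triple conic with the non-planar curve $C_K$, and checking the degeneration of the $\Acal_5$-orbifold map over the three boundary points.
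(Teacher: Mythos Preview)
Your proposal is correct and follows the same overall Hurwitz-space strategy as the paper: Riemann--Hurwitz forces the orbifold type $(0;5,2,2,2)$, the Riemann existence theorem converts the problem to counting Nielsen classes, there are $20$ of them split $10+10$ by the $\Acal_5$-class of $g_1$, a braid-orbit computation shows each half is a single $\Mod_{0,4}$-orbit, and the $\Scal_3$-quotient recovers the unmarked coarse space, which one then matches with the Winger pencil.

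Where you diverge is in the auxiliary arguments, and each alternative is legitimate. For the Nielsen count you invoke a character computation; the paper instead enumerates the conjugacy classes of pairs $(g_1,g_2)$ and then the decompositions $g_3g_4=(g_1g_2)^{-1}$ by hand. For ruling out a central involution $\psi$, you count fixed points via Riemann--Hurwitz and observe that $\mathrm{Fix}(\psi)$ must be a union of $\Acal_5$-orbits of size $12$ or $60$; the paper instead works in a local coordinate over a fixed $\mu_2$-orbifold point and shows the lift $w\mapsto \pm iw$ has order $4$. For injectivity of the Winger pencil into moduli you cite the classical fact that automorphisms of smooth plane sextics are linear; the paper instead proves the sextic linear system is the unique $g^2_6$ on a smooth member. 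For surjectivity you argue by properness of the extended map from $\PP^1$; the paper instead computes the Kodaira--Spencer map directly and shows the pencil is locally universal at every smooth fiber.

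The places you flag as ``the genuine work'' are exactly where the paper spends its effort: the braid-orbit transitivity is done by exhibiting explicit braid moves changing the order of $g_1g_2$ among $\{2,3,5\}$, and the replacement of the triple conic by $C_K$ is carried out by a base change $\tilde\lambda^3=\lambda$ followed by an elementary transformation of the ambient projective bundle, after which the central fiber becomes the $\mu_3$-cover of $K$ branched along the size-$12$ orbit; uniqueness of this cover (and of the lift of the $\Is$-action) is settled by showing $H^1(\Is,\mu_3)=H^2(\Is,\mu_3)=0$. Your sketch ``one then checks'' for the uniqueness of $C_K$ hides one small case split the paper makes: a priori $C/\mu_3$ could have genus $4$ with an unramified $\mu_3$-cover, and this is excluded because the resulting $\Acal_5$-orbit structure on the genus-$4$ quotient (Bring's curve) is incompatible with the required orbifold type.
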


This paper is organized as follows. We first review the construction of the Winger pencil. We  give the geometric construction which comes from the irregular orbits of icosahedral group acting on a projective plane and after that we give the construction using invariant theory.
In the second section we prove some cohomology properties of the Winger pencil and show that it is locally universal at its smooth members.
In the third section we show how to replace the triple conic with a smooth algebraic curve, which lies naturally on a non-trivial cone over that conic, so that  projection away from the vertex yields a ramified triple cover and we describe its universal deformation. Finally,  we introduce two moduli functors, one is the moduli functor of non-singular genus 10 curves endowed with a faithful icosahedral group action, another one is the moduli functor not only of the curve but also of a collection of marked irregular orbits. Using  the approach in an article of B.~Farb and E.~Looijenga \textit{Geometry of the Wiman-Edge Pencil and the Wiman Curve} \cite{eduard2019geometry}. We show that the second one is actually a fine moduli and the first one is only a Deligne-Mumford stack but with a `good' coarse moduli, namely the coarse moduli space has two connected components. This leads us to the proof of our main theorem  stated above.

  {We note that there is another classical pencil made of curves with $\Acal_5$-action, namely the Wiman-Edge pencil. This is a pencil of genus 6 curves on the quintic del Pezzo surface whose members inherit from that surface a $\Acal_5$-action. Both the Winger pencil and the Wiman-Edge pencil come from the study of icosahedral symmetric on $\PP^2$ and its invariant sextic curves. However, no relation between them is known. Recently I.\ Dolgachev, B.\ Farb and E.\ Looijenga  in \cite{dolgachev2018geometry} and B.\ Farb and E.\ Looijenga in its sequels \cite{eduard2019geometry} studied the Wiman-Edge pencil in a more modern way. This paper is inspired by their results.}

The author wants to thank the reviewers for their helpful comments and Prof. Eduard Looijenga for his kind guiding and help.

\section{Definition of the Winger pencil}

\subsection{\textbf{Projective Lines and Projective Planes with $\mathcal{A}_5$ Action}}\label{plpp}
By the character theory of finite groups, we have two complex linear representations of $\mathcal{A}_5$ of degree 3 (denoted $I$ and $I'$), whose characters are given by the Table \ref{ctA5}.
\begin{table}
  \centering
  \caption{Characters of two different irreducible $\Acal_5$-representations}\label{ctA5}
  \begin{tabular}{llllll}
    \hline\noalign{\smallskip}
    \       & (1) & (12)(34) & (123) & (12345)                & (12354)                \\
    \noalign{\smallskip}\hline\noalign{\smallskip}
    $I$     & 3   & -1       & 0     & $\frac{1+\sqrt{5}}{2}$ & $\frac{1-\sqrt{5}}{2}$ \\
    $I^{'}$ & 3   & -1       & 0     & $\frac{1-\sqrt{5}}{2}$ & $\frac{1+\sqrt{5}}{2}$ \\
    \noalign{\smallskip}\hline
  \end{tabular}
\end{table}
Here the first line represents all the conjugacy classes of $\mathcal{A}_5$. The representations $I$ and $I'$ are exchanged by an outer automorphism of $\mathcal{A}_5$. Up to conjagacy there is in fact only one such automorphism that is not inner: it is induced by conjugation with an element of $\mathcal{S}_5-\mathcal{A}_5$ and we shall denote any such automorphism by $\iota$. Since we do not always want to make a choice between $I$ and $I'$,  we fix  a 3-dimensional complex vector space $U$ endowed with finite subgroup $\Is\subset \GL (U)$ that acts irreducibly in $U$ and is isomorphic to $\Acal_5$. The choice of an isomorphism $\Is\cong \Acal_5$ then makes of  $U$ an $\Acal_5$-representation isomorphic to $I$ or $I'$. The $\Is$-action on $U$ induces  one on the projective plane  $\PP(U)$.
The group $\Is$ leaves invariant a nondegenerate symmetric bilinear form on $U$. In particular, $U$ is self-dual as a representation of $\Is$.
This form determines a $\Is$-invariant conic $K$ in $\PP(U)$.
Since $K\cong \PP^1$, we can regard  $K$  as a projective representation of $\Is$. In fact, if we endow $U$ with a $\Is$-invariant inner product, then
$\PP(U)$ inherits from this a Fubini-Study metric and the induced metric on $K$ identifies it with a round sphere. On this sphere we can draw
a regular icosahedron such that $\Is$ becomes its group of motions. This makes  following lemma obvious.

\begin{lem}\label{lem:Korbit}
  The group $\Is$ has in  $K$ only 3 irregular orbits. They are of size 12, 20 and 30, corresponding to the vertices, barycentres of faces and midpoints of edges of a spherical icosahedron respectively. $\square$
\end{lem}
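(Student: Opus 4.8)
The plan is to read off everything from the geometric picture set up just above the statement. Equipping $U$ with a $\Is$-invariant inner product makes $\PP(U)$ carry a Fubini--Study metric for which $K\cong\PP^1$ is a round $2$-sphere, and $\Is$ acts on this sphere as the full group of rotations of an inscribed regular icosahedron. Since $|\Is|=60$, the orbit of a point $x$ has size $60/|\Is_x|$, so the orbit is irregular exactly when the stabilizer $\Is_x$ is nontrivial. Because the stabilizer in $\SO(3)$ of a point of the sphere is a copy of $\SO(2)$, each $\Is_x$ is a finite cyclic group, and every nontrivial element of it is a rotation whose axis is the line through $x$; hence the points with nontrivial stabilizer are precisely the two poles of each rotation axis of $\Is$, and each such pole lies on a unique axis.

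Next I would list the axes of the icosahedral rotation group. They come in three families: the $6$ lines through pairs of opposite vertices, whose poles (the $12$ vertices) have stabilizer cyclic of order $5$; the $10$ lines through the barycentres of pairs of opposite faces, whose poles (the $20$ face barycentres) have stabilizer cyclic of order $3$; and the $15$ lines through the midpoints of pairs of opposite edges, whose poles (the $30$ edge midpoints) have stabilizer cyclic of order $2$. As these stabilizer orders are pairwise distinct, the three sets of poles are disjoint, and together they account for all of $\Is$ via the count $6\cdot 4 + 10\cdot 2 + 15\cdot 1 + 1 = 60$.

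Finally I would invoke the classical transitivity of the icosahedral rotation group on the vertices, on the faces, and on the edges of the icosahedron (see \cite{klein1956lectures}): the $12$ vertices form a single $\Is$-orbit, the $20$ face barycentres another, and the $30$ edge midpoints a third. By orbit--stabilizer these have sizes $60/5=12$, $60/3=20$, $60/2=30$, and by the previous paragraph there are no other irregular orbits; this is exactly the assertion of the lemma.

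I expect no serious obstacle here: the only inputs beyond pure orbit counting are the transitivity statement and the identification of the three stabilizer types, and these are standard facts about $\Acal_5$ realized as the icosahedral group, already implicit in the "group of motions of the icosahedron" description used just above.
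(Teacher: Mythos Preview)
Your proposal is correct and follows exactly the approach the paper intends: the paper gives no proof beyond declaring the lemma ``obvious'' from the spherical icosahedron picture (note the $\square$ immediately after the statement), and what you have written is simply the natural unpacking of that obviousness via orbit--stabilizer and the enumeration of rotation axes. There is nothing to add.
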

\begin{rmk}\label{involution}
  When we  regard $K$ as a round sphere in Euclidean 3-space, then we can define an antipodal map $i$ in $K$. When $K$ is obtained as the projectivization of a complex inner product space of dimension 2, this amounts to assigning to a line its orthogonal complement. So it is anti-linear (hence does not belong to  the image of $\Is$), but still normalizes the $\Is$-action. In fact, it  preserves every irregular orbit of $\Is$ in $K$ and so such an orbit consists of antipodal pairs. See Section 4 of \cite{dolgachev2018geometry}.
\end{rmk}

On the other hand there are no irreducible 2-dimensional complex linear representations of $\Is\cong\mathcal{A}_5$. We can however form a central extension
of $\Is$ by the cyclic group of order 2 that acts on a vector space of dimension 2: if we identify the group of motions of $K$ with $\PSU_2$, then is it given
by a pull-back diagram:
\begin{equation*}
  \xymatrix{1\ar[r]&\mu_2\ar[r]&\SU_2\ar[r]&\PSU_2\ar[r]&1\\
    1\ar[r]&\mu_2\ar[r]\ar[u]&\tilde \Is\ar[u]\ar[r]& \Is\ar[u]\ar[r]&1
  }
\end{equation*}
This gives indeed a non-trivial central extension of $\Is$ by $\mu_2$ and is called the  \textit{binary icosahedral group}. By construction it admits a complex-linear representation of degree 2 (which we shall denote by $V_K$, since it has $K$ as its associated projective line). A point in $P-K$ defines a polar line in $P$ which meets $K$ in a $2$-element subset of $K$ with the same $\Is$-stabilizer. This implies:

\begin{lem}
  The group  $\Is$  has in  $\PP(U)$  6 irregular orbits. The ones in $K$ are  the three orbits of size 12, 20 and 30 mentioned in Lemma \ref{lem:Korbit} and
  the  ones in $P-K$ are obtained as the polar points of the lines spanned by antipodal pairs in an irregular orbit in $K$ and hence are of size
  6, 10 and 15.
\end{lem}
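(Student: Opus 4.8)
The plan is to reduce everything to the action of $\Is$ on $K$, which Lemma~\ref{lem:Korbit} has already analysed, by means of the polarity with respect to the $\Is$-invariant conic $K$. Since $K$ is $\Is$-invariant, an orbit lies either inside $K$ or inside $P-K$; the irregular orbits inside $K$ are exactly the three orbits $O_{12},O_{20},O_{30}$ of Lemma~\ref{lem:Korbit}, of sizes $12,20,30$ and with cyclic stabilisers of orders $5,3,2$. For $P-K$ I would use the observation recorded just before the statement: the polarity $q\leftrightarrow q^{\perp}$ is an $\Is$-equivariant involution (the quadratic form cutting out $K$ being $\Is$-invariant), it carries $K$ onto the set of tangent lines of $K$ and hence $P-K$ onto the set of secant lines of $K$, and for $p\in P-K$ one has $\Is_{p}=\Is_{p^{\perp}}=\Is_{p^{\perp}\cap K}$. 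Thus $\Is$-orbits in $P-K$ are the same thing as $\Is$-orbits of unordered pairs of distinct points of $K$.

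To exhibit the three orbits I would start from a cyclic subgroup $C\subset\Is$ of order $\ell\in\{5,3,2\}$. Restricting the character of $U$ to $C$ gives $\dim U^{C}=1$; on the $C$-invariant orthogonal decomposition $U=U^{C}\oplus(U^{C})^{\perp}$ the form is non-degenerate on the line $U^{C}$ (which it pairs only with itself), so $\PP(U^{C})\in P-K$, and $\PP(U^{C})$ is the pole of the line $\PP((U^{C})^{\perp})$, which is a secant meeting $K$ in the two fixed points of $C$ on $K$. Viewed on the round sphere $K$, those two points form the antipodal pair on the rotation axis of $C$, a vertex, face, or edge axis of the icosahedron according as $\ell=5,3,2$, so the pair lies in $O_{12}$, $O_{20}$ or $O_{30}$. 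As $\Is$ permutes the conjugates of $C$ transitively, the points $\PP(U^{C})$ form one $\Is$-orbit in $P-K$, of cardinality the number of those subgroups --- the $6$ Sylow $5$-subgroups, the $10$ Sylow $3$-subgroups, or the $15$ involutions --- i.e.\ sizes $6=12/2$, $10=20/2$, $15=30/2$, matching the antipodal-pair description. An element fixing $\PP(U^{C})$ preserves $U^{C}$ and so normalises $C$ (a cyclic subgroup of a fixed order being recoverable from $U^{C}$), so the stabiliser is $N_{\Is}(C)$: the dihedral group $D_{5}$ of order $10$, the symmetric group $\Scal_{3}$, or $\mu_{2}\times\mu_{2}$ respectively. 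Together with $O_{12},O_{20},O_{30}$ these are the six orbits.

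It remains to rule out further irregular orbits in $P-K$. For $p\in P-K$ with $\Is_{p}\neq 1$, the group $\Is_{p}$ has the fixed point $p$ in $\PP(U)$, and since $U$ restricts irreducibly to $\Acal_{4}$ and to $\Acal_{5}$, it must be one of $\mu_{2},\mu_{3},\mu_{5},\mu_{2}\times\mu_{2},\Scal_{3},D_{5}$. If $\Is_{p}$ contains an element of order $3$ or $5$, generating $C$, then $p$ is the unique $C$-fixed point $\PP(U^{C})$ of $P-K$, already fixed by all of $N_{\Is}(C)$; since $N_{\Is}(C)$ ($=\Scal_{3}$ or $D_{5}$) is a maximal subgroup of $\Acal_{5}$ and $\Acal_{5}$ has no fixed point in $\PP(U)$, we get $\Is_{p}=N_{\Is}(C)$, so $p$ is in the size-$10$ or size-$6$ orbit. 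If $\Is_{p}$ is a $2$-group it is $\mu_{2}$ or $\mu_{2}\times\mu_{2}$: a short count shows the fifteen $\mu_{2}\times\mu_{2}$-fixed points of $P-K$ are distinct and make up the size-$15$ orbit, while if $\Is_{p}=\langle g\rangle$ with $g$ an involution then, $g$ acting on $\PP(U)$ as a homology fixing a line $\ell_{g}$ pointwise and an isolated point $\ell_{g}^{\perp}$, I claim $p\in\ell_{g}$, so the orbit of $p$ has size $30$ and is ``regular''. This last claim is the step I expect to need the most care: one checks that $g$ must interchange the two points of $p^{\perp}\cap K$ unless $p=\ell_{g}^{\perp}$ (which has the larger stabiliser $\mu_{2}\times\mu_{2}$), and that when $g$ interchanges them the restriction of $g$ to the $g$-invariant line $p^{\perp}\cong\PP^{1}$ is a non-trivial involution, whose two fixed points lie in $\ell_{g}\cup\{\ell_{g}^{\perp}\}$ and hence (as $p^{\perp}$ meets $\ell_{g}$ in at most one point) force $\ell_{g}^{\perp}\in p^{\perp}$, i.e.\ $p\in\ell_{g}$. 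Everything else reduces to Lemma~\ref{lem:Korbit}, the equivariance of the polarity, and routine bookkeeping with characters and with the subgroup lattice of $\Acal_{5}$.
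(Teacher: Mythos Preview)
The paper gives no formal proof: it records the sentence before the lemma (``A point in $P-K$ defines a polar line in $P$ which meets $K$ in a $2$-element subset of $K$ with the same $\Is$-stabilizer'') and treats the statement as immediate. Your argument via polarity, characters, and normalisers is therefore much more thorough, and your construction of the size-$6$, $10$, $15$ orbits is correct and is exactly what the paper has in mind.

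The difficulty lies in your final paragraph, and it reflects a genuine inaccuracy in the lemma as stated. You correctly observe that each involution $g$ acts on $\PP(U)$ with eigenvalues $(1,-1,-1)$, hence as a homology fixing a line $\ell_g$ pointwise together with an isolated point $\ell_g^{\perp}$; and you correctly deduce that any $p$ with $\Is_p=\langle g\rangle$ must lie on $\ell_g$ (the isolated point having the larger stabiliser $\mu_2\times\mu_2$). But your conclusion that such an orbit ``has size $30$ and is `regular'\,'' is wrong: an orbit of size $30<|\Is|=60$ is by definition irregular. A generic point of $\ell_g$ in $P-K$ has stabiliser exactly $\langle g\rangle$, so these size-$30$ orbits in $P-K$ form a one-parameter family and $\PP(U)$ has infinitely many irregular $\Is$-orbits, not six. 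In the polarity picture this is just the observation that a $2$-element subset of $K$ can have nontrivial setwise stabiliser without being an antipodal pair --- any pair interchanged by an involution of $K$ will do --- and the paper's one-line implication overlooks the same point.

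What your argument does establish, and what the lemma presumably intends, is that the six listed orbits are the irregular orbits contained in $K$ together with the poles of the secants through their antipodal pairs; equivalently, they are the five orbits with stabiliser of order larger than $2$ plus the size-$30$ orbit on $K$. The remaining irregular orbits are the continuum of size-$30$ orbits sweeping out the fifteen lines $\ell_g$. You should say this explicitly rather than absorb the discrepancy into scare quotes around ``regular''.
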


The 6 antipodal pairs that make up the size 12 orbit in $K$ span six distinct lines in $\PP(U)$, no three which are collinear. Their union, which we denote  by $C_\infty$, is clearly a $\Is$-invariant sextic. The \emph{Winger pencil} is a pencil of plane sextics with faithful $\Is$-action generated by the two generators $C_\infty$ and $3K$. For an appropriate  choice of coordinates for $U$, the equation of $K$ is
\begin{equation}\label{defeqK}
  Q(z_0,z_1,z_2)=z_0z_1+z_2^2=0
\end{equation}
while the equation of $C_\infty$ is
\begin{equation}\label{defeqCinfty}
  F(z_0,z_1,z_2)=z_2\Pi_{i=1}^5(\eta^iz_0+\eta^{4i}z_1+z_2)=0
\end{equation}
where $\eta$ is a primitive fifth root of unity. Then this pencil of plane sextics can be described as
\begin{equation}\label{defequ}
  Q^3+\lambda F=0,\ \lambda\in\mathds{P}^1
\end{equation}
Winger \cite{wingerinvariants} showed that this pencil has exactly four singular members. They are
\begin{enumerate}
  \item the conic with multiplicity 3, $3K$,
  \item the union of six lines $C_\infty$,
  \item an irreducible nodal curve with 6 nodes (for $\lambda=-1$) which can be obtained from Bring's curve (of genus $4$) by identifying each  antipodal pair appearing  in one of the two size 12 irregular orbits (we will give more explanation in Remark \ref{norS5act}),
  \item an irreducible nodal curve with 10 nodes (for $\lambda=\frac{27}{5}$) which can be obtained from $K$ by identifying each  antipodal pair appearing in the size 20 orbit.
\end{enumerate}

In the last two cases, the set of nodes make up an  $\Is$-orbit.

\subsection{\textbf{Invariant Theory of Finite Groups}}
This pencil can be described in terms of invariant theory. We recall some theorems that we shall need; for details we refer \cite{neusel2010invariant} and \cite{mukai2003introduction}.
\begin{thm}[Hilbert (1890), Noether (1916, 1926)] Let $G$ be a finite group. Then for every finite dimensional representation $V$ of $G$ then
  \begin{enumerate}
    \item the ring extension $\CC[V]^G\subset \CC[V]$ is finite, and
    \item the invariant ring $\CC[V]^G$ is a finite generated $\CC$-algebra.
  \end{enumerate}
\end{thm}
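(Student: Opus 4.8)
The plan is to reduce both assertions to the Hilbert basis theorem via the classical ``averaging over the orbit'' trick. Write $B=\CC[V]$ for the coordinate (polynomial) ring of $V$, on which $G$ acts, and $A=\CC[V]^G=B^G$. \emph{Step 1 (integral dependence).} For an arbitrary $f\in B$ I would form the monic polynomial $\phi_f(T)=\prod_{g\in G}(T-g\cdot f)\in B[T]$. Its coefficients are, up to sign, the elementary symmetric functions of the finite family $\{g\cdot f:g\in G\}$, which $G$ merely permutes; hence these coefficients lie in $A$. Since $\phi_f$ is monic of degree $|G|$ with $\phi_f(f)=0$, every element of $B$ satisfies a monic equation over $A$ of degree at most $|G|$; in particular $B$ is integral over $A$.

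\emph{Step 2 (a Noetherian subalgebra).} Choose linear coordinates $x_1,\dots,x_n$ on $V$, so that $B=\CC[x_1,\dots,x_n]$, and let $A_0\subseteq A$ be the $\CC$-subalgebra generated by the finitely many coefficients of $\phi_{x_1},\dots,\phi_{x_n}$. Then $A_0$ is a finitely generated $\CC$-algebra, hence Noetherian by the Hilbert basis theorem. Because each $x_i$ satisfies a monic equation of degree $|G|$ over $A_0$, the monomials $x_1^{a_1}\cdots x_n^{a_n}$ with $0\le a_i<|G|$ generate $B$ as an $A_0$-module, so $B$ is a finitely generated $A_0$-module. \emph{Step 3 (conclusions).} Now $A_0\subseteq A\subseteq B$, and $A$ is an $A_0$-submodule of the finitely generated module $B$ over the Noetherian ring $A_0$; therefore $A$ is itself a finitely generated $A_0$-module, and as a module-finite extension of the finitely generated $\CC$-algebra $A_0$ it is a finitely generated $\CC$-algebra, which is (2). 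Finally, since $B$ is already finite as an $A_0$-module and $A_0\subseteq A$, it is a fortiori finite as an $A$-module, i.e.\ $A\subseteq B$ is a finite ring extension, which is (1).

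There is no serious obstacle here: the only non-formal ingredient is the Hilbert basis theorem, whose role is precisely to upgrade the cheap integral dependence of Step 1 into genuine module-finiteness through the auxiliary ring $A_0$. I would add a remark that, since the ground field in the paper is $\CC$ and hence of characteristic $0$, one may alternatively run the slicker argument via the Reynolds operator $\rho\colon B\to A$, $\rho(f)=\tfrac{1}{|G|}\sum_{g\in G}g\cdot f$, which is an $A$-linear projection: choosing a finite set of homogeneous invariants generating the Hilbert ideal $A_{+}B\subseteq B$ and applying $\rho$ to an expression of a positive-degree invariant as a $B$-combination of them, an induction on degree shows directly that those finitely many invariants generate $A$ as a $\CC$-algebra; part (1) then follows from Step 1 together with the Noetherianity of $A$ just established.
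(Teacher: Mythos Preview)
Your proof is correct and is essentially the classical Noether argument (the Artin--Tate style reduction through an auxiliary finitely generated subalgebra $A_0$), with the Hilbert/Reynolds-operator variant noted as an alternative. The paper itself does not supply a proof of this statement at all: it merely records the theorem as background and refers the reader to \cite{neusel2010invariant} and \cite{mukai2003introduction} for details. So there is nothing to compare against; your write-up simply fills in what the paper left to the literature, and it does so in the standard way.
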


To determine these generators, we have the following trick. First by the Noether's Normalization Lemma, we can find algebraic independent
homogeneous polynomials $\{f_1,\cdots,f_r\}\subset \CC[V]^G$ such that $\CC[V]^G$ is finite over $\mathds{C}[f_1,\cdots,f_r]$. Such a collection $\{f_1,\cdots,f_r\}$ is called a set of \textit{primary invariants}. According to Hochster-Eagon (Proposition 3 in \cite{hochster1971cohen}), $\CC[V]^G$ is Cohen-Macaulay. This implies that it is a free $\mathds{C}[f_1,\cdots,f_r]$-module of finite rank. A homogeneous basis $\{g_1,\cdots,g_s\}$ of this $\mathds{C}[f_1,\cdots,f_r]$ module is are called a set of \textit{secondary invariants}.

For $R$  a graded noetherian $\mathds{C}$-algebra,  we denote by $H_R(T):=\sum_{d\ge 0}\dim_{\mathds{C}}(R_d) T^d$ its Hilbert Series. The following proposition shows that this can be used to get some information on the degrees of the primary and secondary generators of $\CC[V]^G$.

\begin{prop}
  Let $G$ be a finite group, $V$ a finite dimensional representation of $G$, and  $\{f_1,\cdots,f_r\}$ and $\{g_1,\cdots,g_s\}$ the first and secondary invariants of $G$ in $\CC[V]$ as above. Then
  \begin{equation*}
    H_{\CC[V]^G}(T)=\frac{\sum_{j=1}^{s}T^{\deg(g_j)}}{\Pi_{i=1}^{r}(1-T^{\deg(f_i)})}.
  \end{equation*}
\end{prop}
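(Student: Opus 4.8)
The plan is to reduce the identity to two elementary properties of the Hilbert series of graded vector spaces, together with the freeness statement already recorded just above the proposition.

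First I would note that, since $f_1,\dots,f_r$ are algebraically independent homogeneous elements, the subalgebra $A:=\CC[f_1,\dots,f_r]\subseteq\CC[V]^G$ is itself a graded polynomial ring on generators of degrees $\deg(f_1),\dots,\deg(f_r)$. A $\CC$-basis of $A$ is given by the monomials $f_1^{a_1}\cdots f_r^{a_r}$ with $(a_1,\dots,a_r)\in\ZZ_{\ge0}^r$, and such a monomial is homogeneous of degree $\sum_i a_i\deg(f_i)$. Summing over all exponent tuples and factoring the resulting geometric series (a routine computation) gives
\[
 H_A(T)=\sum_{(a_1,\dots,a_r)\in\ZZ_{\ge0}^r} T^{\sum_i a_i\deg(f_i)}=\prod_{i=1}^{r}\Bigl(\sum_{a\ge0}T^{a\deg(f_i)}\Bigr)=\prod_{i=1}^{r}\frac{1}{1-T^{\deg(f_i)}}.
\]

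Next I would invoke the Cohen--Macaulay property of $\CC[V]^G$ supplied by the cited theorem of Hochster--Eagon: it makes $\CC[V]^G$ a \emph{free} graded $A$-module, and by the definition of secondary invariants the elements $g_1,\dots,g_s$ form a homogeneous $A$-basis. Hence there is an isomorphism of graded $\CC$-vector spaces $\CC[V]^G\cong\bigoplus_{j=1}^{s}A\cdot g_j$, in which each summand $A\cdot g_j$ is a copy of $A$ with its grading shifted up by $\deg(g_j)$. Since the Hilbert series is additive over finite direct sums and a degree shift by $d$ multiplies it by $T^{d}$, combining with the formula for $H_A(T)$ yields
\[
 H_{\CC[V]^G}(T)=\sum_{j=1}^{s}T^{\deg(g_j)}\,H_A(T)=\frac{\sum_{j=1}^{s}T^{\deg(g_j)}}{\prod_{i=1}^{r}\bigl(1-T^{\deg(f_i)}\bigr)},
\]
which is the asserted identity.

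The argument is almost entirely bookkeeping with graded modules; the one non-formal ingredient is the Cohen--Macaulay property, which is exactly what guarantees that $\CC[V]^G$ is \emph{free} over the polynomial subring $A$ rather than merely finitely generated or torsion-free, so that the numerator $\sum_{j}T^{\deg(g_j)}$ comes out as a genuine polynomial with nonnegative integer coefficients. That freeness is the substantive step, but it has already been established in the discussion preceding the proposition, so here it may simply be quoted.
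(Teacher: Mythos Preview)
Your argument is correct and is exactly the standard derivation of this formula. The paper itself does not supply a proof of this proposition: it is stated as background material, with the reader referred to \cite{neusel2010invariant} and \cite{mukai2003introduction} for details, and the freeness of $\CC[V]^G$ over $\CC[f_1,\dots,f_r]$ (via Hochster--Eagon) is recorded in the paragraph preceding the statement. Your write-up simply unpacks that freeness into the Hilbert-series identity, which is precisely what those references do.
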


On the other hand, we can compute the  Hilbert Series by means of
\begin{thm}[Molien's Formula] We have
  \begin{equation*}
    H_{\CC[V]^G}(T)=\frac{1}{|G|}\sum_{\pi\in G}\frac{1}{\det_{V}(1-T\pi)}=\frac{1}{|G|}\sum_{\pi\in G}\frac{1}{\Pi_i(1-\lambda_i^{\pi}T)},
  \end{equation*}
  where the $\lambda_i^{\pi}$'s  are the eigenvalues of $\pi$
\end{thm}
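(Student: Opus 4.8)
The plan is to reduce Molien's formula to the elementary fact that for every finite-dimensional complex representation $W$ of $G$ one has $\dm_{\CC} W^G=\frac{1}{|G|}\sum_{\pi\in G}\operatorname{tr}_W(\pi)$. This in turn I would prove by observing that, since $\operatorname{char}\CC=0$, the Reynolds operator $\rho:=\frac{1}{|G|}\sum_{\pi\in G}\pi\in\End(W)$ is defined; it is $G$-equivariant and idempotent with image exactly $W^G$, hence a projection onto $W^G$, so that $\dm_{\CC}W^G=\operatorname{rank}\rho=\operatorname{tr}_W(\rho)=\frac{1}{|G|}\sum_{\pi\in G}\operatorname{tr}_W(\pi)$.

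Applying this with $W=\CC[V]_d$, the space of degree-$d$ homogeneous polynomial functions on $V$, which as a $G$-representation is $\Sym^d(V^\ast)$ (with the dual action), gives $\dm_{\CC}\bigl(\CC[V]^G_d\bigr)=\frac{1}{|G|}\sum_{\pi\in G}\operatorname{tr}_{\Sym^d(V^\ast)}(\pi)$. I would then multiply by $T^d$, sum over $d\ge 0$, and interchange the finite sum over $G$ with the sum over $d$ — legitimate as an identity of power series convergent for small $|T|$, hence of rational functions — to obtain
\begin{equation*}
  H_{\CC[V]^G}(T)=\frac{1}{|G|}\sum_{\pi\in G}\Bigl(\sum_{d\ge 0}\operatorname{tr}_{\Sym^d(V^\ast)}(\pi)\,T^d\Bigr).
\end{equation*}

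It remains to evaluate the inner series for a fixed $\pi$. Since $\pi$ has finite order it is diagonalizable; writing $\mu_1,\dots,\mu_n$ for its eigenvalues on $V^\ast$, the eigenvalues of $\pi$ on $\Sym^d(V^\ast)$ are the degree-$d$ monomials in the $\mu_i$, so $\operatorname{tr}_{\Sym^d(V^\ast)}(\pi)$ is the complete homogeneous symmetric function $h_d(\mu_1,\dots,\mu_n)$, and the classical identity $\sum_{d\ge 0}h_d(\mu)T^d=\prod_{i=1}^n(1-\mu_iT)^{-1}$ identifies the inner series with $\det_{V^\ast}(1-T\pi)^{-1}$. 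Finally, the eigenvalues of $\pi$ on $V^\ast$ are the inverses of those on $V$ (equivalently, these being roots of unity, their complex conjugates), so the substitution $\pi\mapsto\pi^{-1}$ in the sum over $G$ turns $\det_{V^\ast}(1-T\pi)^{-1}$ into $\det_{V}(1-T\pi)^{-1}=\prod_i(1-\lambda_i^\pi T)^{-1}$, which is precisely the asserted formula.

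I do not expect any single step to be a serious obstacle; the work is entirely in keeping the bookkeeping honest — recording that $\CC[V]_d\cong\Sym^d(V^\ast)$ carries the dual $G$-action, and that passing from $V^\ast$ back to $V$ is exactly cancelled by the reindexing $\pi\mapsto\pi^{-1}$. The one place the hypotheses genuinely enter is $\operatorname{char}\CC=0$ (together with $|G|<\infty$), which is what makes the averaging operator $\rho$ available; the rest is formal manipulation of generating functions. Combined with the previous proposition, this identifies the Hilbert series of $\CC[V]^G$ in two ways and feeds directly into the explicit computation of $\CC[U]^{\Is}$ used below to recover the Winger pencil.
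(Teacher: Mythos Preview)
Your proof is correct and is the standard argument for Molien's formula. Note, however, that the paper does not actually prove this theorem: it is stated as a classical result, with the reader referred to \cite{neusel2010invariant} and \cite{mukai2003introduction} for details. So there is no ``paper's own proof'' to compare against; your argument simply supplies what the paper leaves to the references, and does so along exactly the lines found in those standard sources.
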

So we can determine the degree of invariant polynomials by computing the Hilbert series in two different ways. Using this we can calculate that $\CC[U]^{\Is}$ has primary invariants of degree 2 (denoted by $\alpha$), 6 (denoted by $\beta$) and 10 and secondary invariants of degree 0 and 15.

\begin{cor}\label{cor:Wingergen}
  The Winger pencil is defined by $\CC[U]_6^{\Is}$ and admits $\alpha^3,\beta$ as a basis.
\end{cor}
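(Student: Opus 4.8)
The plan is to read off $\dim_\CC\CC[U]_6^{\Is}$ from the Hilbert series and then to exhibit two manifestly independent invariants of degree $6$. From the degrees of the primary invariants ($2,6,10$) and of the secondary invariants ($0,15$) recorded just above, the Proposition on Hilbert series gives
\begin{equation*}
  H_{\CC[U]^{\Is}}(T)=\frac{1+T^{15}}{(1-T^2)(1-T^6)(1-T^{10})}.
\end{equation*}
I would expand the right-hand side through degree $6$: the factor $(1-T^{10})^{-1}$ and the numerator contribute nothing below degree $10$, while $(1-T^2)^{-1}(1-T^6)^{-1}=1+T^2+T^4+2T^6+\cdots$, so $\dim_\CC\CC[U]_d^{\Is}$ equals $1,0,1,0,1,0,2$ for $d=0,1,\dots,6$. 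In particular $\CC[U]_6^{\Is}$ is two-dimensional, with the monomial $\alpha^3$ and the degree $6$ primary invariant $\beta$ as the two evident elements.

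Next I would check that $\alpha^3$ and $\beta$ are linearly independent. Since $\alpha$ and $\beta$ belong to a system of primary invariants, they are algebraically independent over $\CC$; in particular $\beta\notin\CC[\alpha]$, so $\beta$ is not a scalar multiple of $\alpha^3$, and hence $\{\alpha^3,\beta\}$ is a basis of $\CC[U]_6^{\Is}$. To match this with the concrete description \eqref{defequ}, one takes $\alpha=Q$, the quadratic form of \eqref{defeqK}, so that $\alpha^3=Q^3$; and one observes that $F$ from \eqref{defeqCinfty} lies in $\CC[U]_6^{\Is}$. A priori $\Is$ only preserves the reduced sextic $C_\infty=V(F)$, so $F$ is a semi-invariant, i.e. $g^{*}F=\chi(g)F$ for a character $\chi\colon\Is\to\CC^{*}$; but $\Is\cong\Acal_5$ is perfect and therefore has no nontrivial characters, which forces $g^{*}F=F$ for all $g\in\Is$. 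Since $V(F)$ is a union of six distinct lines, hence reduced, while $V(Q^3)$ is not, $F$ is not proportional to $Q^3$, so $\{Q^3,F\}$ is again a basis of $\CC[U]_6^{\Is}$.

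Finally, $\PP(\CC[U]_6^{\Is})$ is then a line inside the linear system $|\Ocal_{\PP(U)}(6)|$ of plane sextics, i.e. a pencil, and by the previous step its members are exactly the curves $Q^3+\lambda F=0$, $\lambda\in\PP^1$, which is the Winger pencil of \eqref{defequ}. I do not expect a serious obstacle here: the only points to verify are the power-series expansion of the Hilbert series through degree $6$ and the independence of $\alpha^3$ and $\beta$, both immediate from the list of degrees of the invariant generators together with the algebraic independence of the primary invariants; the $\Is$-invariance of $F$ is likewise a one-line consequence of $\Acal_5$ being perfect.
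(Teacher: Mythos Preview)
Your proof is correct and follows the approach the paper intends: the corollary is stated in the paper without proof, as immediate from the Hilbert-series data recorded just before it. You have filled in exactly the details the paper leaves implicit---the degree-$6$ coefficient of the series, the independence of $\alpha^3$ and $\beta$ via algebraic independence of the primary invariants, and the identification with the concrete generators $Q^3,F$ using that $\Acal_5$ is perfect.
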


We close this section with mentioning that we have a 6 dimensional irreducible representation $E$ of $\mathcal{S}_5$ with the character table (Table \ref{chartabS5}).
\begin{table}
  \center
  \caption{Characters of six dimensional representation of $\Scal_5$}\label{chartabS5}
  \begin{tabular}{llllllll}
    \hline\noalign{\smallskip}
        & (1) & (12) & (12)(34) & (123) & (123)(45) & (1234) & (12345) \\
    \noalign{\smallskip}\hline\noalign{\smallskip}
    $E$ & 6   & 0    & -2       & 0     & 0         & 0      & 1       \\
    \noalign{\smallskip}\hline
  \end{tabular}
\end{table}
If we consider it as a representation of $\mathcal{A}_5$, then it is the direct sum of two distinct 3 dimensional irreducible representations of $\mathcal{A}_5$, i.e $E\cong I\oplus I'$.

\section{Versality of the Winger Pencil at its smooth members}

Let us define the family $\mathscr{W}\rightarrow \mathscr{B}$ as the union of two copies of Winger pencil, obtained by two identifications
$\Acal_5\cong \Is$ that differ by an automorphism $\iota$ of $\Acal_5$ that is not inner. So every member of this  family except for the two copies of $3K$ is a curve of arithmetic genus 10 and endowed with a faithful $\mathcal{A}_5$-action and each component of $\mathscr{W}\rightarrow \mathscr{B}$ has 4 singular fibers. We will use $\mathscr{W}^{\circ}\rightarrow B^{\circ}$ to denote the locus  over which  this morphism is  smooth.

\begin{lem}\label{h1CC}
  Let $C$ be a  smooth member of the  family $\mathscr{W}$. Then $H^0(C,\omega_{C})$ is a 10-dimensional complex representation of $\mathcal{A}_5$ that is isomorphic to $V\oplus I\oplus I'$ where $V$ is a $4$-dimensional irreducible representation. Moreover $H^1(C;\CC)$ is isomorphic to $V^{\oplus2}\oplus (I\oplus I')^{\oplus 2}$.
\end{lem}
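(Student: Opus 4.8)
The plan is to compute the $\Acal_5$-character of $H^0(C,\omega_C)$ (equivalently of $H^1(C;\CC)$) by combining two pieces of data: the genus of the quotient orbifold together with its ramification, and the local representation of each stabilizer on the fibers of $\omega_C$ at the ramification points. The main tool is the equivariant Riemann--Roch / Chevalley--Weil formula, which expresses the multiplicity of each irreducible $\Acal_5$-representation in $H^0(C,\omega_C)$ in terms of these local contributions. The main obstacle, and the place where one must be careful, is pinning down the orbifold data $C/\Acal_5$ precisely; I would either take this from the explicit description of the Winger pencil (a smooth member is a plane sextic with the six irregular orbits of sizes $6,10,15$ in $P-K$ and $12,20,30$ in $K$ occurring on it, but only those that actually lie on $C$ with nontrivial stabilizer contribute) or, better, invoke that $C/\Acal_5\cong\PP^1$ with exactly four orbifold points, one of type $\mu_5$ and three of type $\mu_2$ — which is exactly the assertion of Theorem \ref{main1} that one is allowed to use here once it is proved, but since this lemma is used \emph{en route} to that theorem I would instead establish the orbifold data directly from the geometry of the pencil and Lemma \ref{lem:Korbit} and the lemma following it.

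First I would identify the quotient: $C$ is a smooth projective curve of genus $10$ with a faithful $\Acal_5$-action, so by Riemann--Hurwitz applied to $C\to C/\Acal_5=:\bar C$, writing $g(\bar C)=h$ and letting the branch points have stabilizers of orders $m_1,\dots,m_k$, we get $2\cdot 10-2=60\bigl(2h-2+\sum_{j=1}^k(1-1/m_j)\bigr)$, i.e. $18=60\bigl(2h-2+\sum_j(1-1/m_j)\bigr)$, so $2h-2+\sum_j(1-1/m_j)=3/10$. The only solution with $h\ge 0$ and each $m_j\in\{2,3,5\}$ (the orders of nontrivial cyclic subgroups of $\Acal_5$) is $h=0$ with signature $(2,2,2,5)$: indeed $-2+(1/2+1/2+1/2+4/5)=3/10$. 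This fixes $\bar C\cong\PP^1$ with three $\mu_2$-points and one $\mu_5$-point, and matches the three irregular orbits in $\PP(U)\setminus K$ (sizes $15,15,15$... here one checks the orbit sizes $30=60/2$ for the $\mu_2$-points and $12=60/5$ for the $\mu_5$-point, consistent with the size-$12$ and size-$30$ orbits of Lemma \ref{lem:Korbit}).

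Next I would apply the Chevalley--Weil formula. For a faithful $G$-action on $C$ with quotient genus $0$ and branch points $P_1,\dots,P_k$ of stabilizer orders $m_1,\dots,m_k$, the multiplicity of the trivial representation in $H^0(C,\omega_C)$ is $0$, and for a nontrivial irreducible $\rho$ of dimension $d_\rho$ the multiplicity is
\begin{equation*}
\langle H^0(C,\omega_C),\rho\rangle = -d_\rho + \sum_{j=1}^{k}\sum_{\ell=1}^{m_j-1} N_{j,\ell}\,\Bigl\langle \rho\big|_{\langle g_j\rangle},\chi_{\ell}\Bigr\rangle\cdot\Bigl(1-\tfrac{\ell}{m_j}\Bigr),
\end{equation*}
where $g_j$ generates the $j$-th stabilizer with a chosen local coordinate, $\chi_\ell$ is the character $g_j\mapsto e^{2\pi i\ell/m_j}$, and $N_{j,\ell}$ counts the points in the $G$-orbit of $P_j$ with the given rotation number; concretely this reduces to restricting each irreducible $\Acal_5$-representation to the cyclic groups $\mu_2,\mu_2,\mu_2,\mu_5$ and reading off eigenvalue multiplicities from the character Table \ref{ctA5}. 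Carrying this out for the four irreducibles of $\Acal_5$ (trivial, $I$, $I'$, the $4$-dimensional $V$, and the $5$-dimensional one) — using that $I|_{\mu_5}$ and $I'|_{\mu_5}$ contain the primitive characters with the multiplicities dictated by the entries $\tfrac{1\pm\sqrt5}{2}$, that $V|_{\mu_5}$ contains each nontrivial character once, that the $5$-dimensional representation restricted to $\mu_5$ is the regular-minus-trivial, and the analogous (easier) restrictions to $\mu_2$ — gives multiplicities $0$ for the trivial and $5$-dimensional representations, $1$ each for $I$ and $I'$, and $2$ for $V$. Hence $H^0(C,\omega_C)\cong V\oplus I\oplus I'$ as claimed. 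Finally, $H^1(C;\CC)\cong H^0(C,\omega_C)\oplus\overline{H^0(C,\omega_C)}$ as $\Acal_5$-modules, and since $I,I',V$ are all self-conjugate (their characters are real — $I$ and $I'$ are interchanged only by the \emph{outer} automorphism, not by complex conjugation, and one checks directly that $\overline I\cong I$), we get $H^1(C;\CC)\cong V^{\oplus 2}\oplus(I\oplus I')^{\oplus 2}$. The step I expect to require the most care is the bookkeeping of rotation numbers in the Chevalley--Weil sum — in particular getting the $\mu_5$-contribution right so that $I$ and $I'$ each appear with multiplicity exactly $1$ rather than $0$ or $2$ — and, logically, making sure the orbifold signature $(2,2,2,5)$ is derived independently of Theorem \ref{main1} to avoid circularity.
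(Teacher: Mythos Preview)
Your approach is correct in principle and genuinely different from the paper's. The paper does not use Chevalley--Weil at all: instead it observes that for a smooth plane sextic $C\subset\PP(U)$ the Griffiths residue map
\[
\CC[U]_3\longrightarrow H^0(C,\omega_C),\qquad G\mapsto \Res_C\Res_\infty\bigl(G\,\mu/F\bigr)
\]
is an $\Is$-equivariant isomorphism, so $H^0(C,\omega_C)\cong\Sym^3 U^\vee$ as an $\Acal_5$-representation. It then computes the character of $\Sym^3 I$ (and $\Sym^3 I'$) via the symmetric-power formula, obtaining $(10,-2,1,0,0)$ for both, which decomposes as $V\oplus I\oplus I'$. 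This is slicker here because the plane model is already in hand, and it sidesteps the rotation-number bookkeeping entirely; your method has the advantage of being intrinsic to $C$ as an $\Acal_5$-curve and of making the dependence on the orbifold signature transparent.

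Two minor corrections. First, you wrote ``$2$ for $V$'', but $2\cdot 4+3+3=14\neq 10$; the multiplicity of $V$ must be $1$, and indeed you then state the correct decomposition $V\oplus I\oplus I'$ in the next sentence. Second, your parenthetical about orbit sizes is garbled: the three $\mu_2$-orbits on $C$ have size $60/2=30$ each and the $\mu_5$-orbit has size $12$; these are orbits on the curve $C$, not the irregular orbits of $\Is$ on $\PP(U)$ listed after Lemma~\ref{lem:Korbit}, so the attempted matching with ``sizes $15,15,15$'' is off and in any case unnecessary for your argument. Your derivation of the signature $(5,2,2,2)$ via Riemann--Hurwitz is exactly what the paper does later in Proposition~\ref{quta55222}, so there is no circularity.
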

\begin{proof}
  Choose a generator  $\mu$ of $\wedge^3 U^\vee$ and regard this as a translation-invariant $3$-form on $U$.
  Note that this form is $\Is$-invariant.
  Let $F\in \CC[U]_6^\Is$ be a defining equation for $C$. Then a  meromorphic 3-form on $U$ of the type
  \begin{equation*}
    \Omega_G=G\frac{\mu}{F}, \quad \text{ with } G\in  \CC[U]_3
  \end{equation*}
  has a simple pole along the cone over $C$ (defined by $F=0$) and at the hyperplane at infinity  $\PP (U)$.
  This enabled us to define a linear  map
  \begin{equation*}
    \begin{aligned}
      \CC[U]_3 & \rightarrow H^0(C,\omega_{C})        \\
      G        & \mapsto \Res_C\Res_{\infty}\Omega_G.
    \end{aligned}
  \end{equation*}
  This map is known to be injective see Chapter 5 of \cite{griffiths2014principles} and since
  both $\CC[U]_3$ and $H^0(C,\omega_{C})$  are of dimension $10$,  it  must be an isomorphism.
  As it is also $\Is$-equivariant, it follows  that  $H^0(C,\omega_{C})$ is as a $\Is$-representation isomorphic  with $\CC[U]_3$.
  We know that $U^{\vee}$ is either $I$ or $I'$ as an $\mathcal{A}_5$-representation. Recall the following character formula for symmetric powers
  \begin{equation*}
    \chi_{\CC[U]_3}(g)=\frac{\chi_{U^{\vee}}^3(g)+3\chi_{U^{\vee}}(g^2)\chi_{U^{\vee}}(g)+2\chi_{U^{\vee}}(g^3)}{6}
  \end{equation*}
  For $\Acal_5$, we can compute the characters see Table \ref{ctsc}.
  \begin{table}
    \centering
    \caption{Characters of Symmetric Cubics}\label{ctsc}
    \begin{tabular}{llllll}
      \hline\noalign{\smallskip}
      \          & (1) & (12)(34) & (123) & (12345) & (12354) \\
      \noalign{\smallskip}\hline\noalign{\smallskip}
      $\Sym^3I$  & 10  & -2       & 1     & 0       & 0       \\
      $\Sym^3I'$ & 10  & -2       & 1     & 0       & 0       \\
      \noalign{\smallskip}\hline
    \end{tabular}
  \end{table}
  Note that if we denote $g:=(12345)$ and $h:=(12354)$, then $g^2$ and $g^3$ are both in the conjugacy class of $h$. This is because first we have $g^2=(13524)$ and equation
  \begin{equation*}
    (43)(15)(13524)(15)(43)=(12354)
  \end{equation*}
  So if we identify $\Is$ with $\Acal_5$, then the character table  of $\CC[U]_3$ as a $\Acal_5$-representation
  does not depend on the choice of an isomorphism $\Is\cong\Acal_5$. We  find that $\CC[U]_3$ is isomorphic to $V\oplus I\oplus I'$ as a $\mathcal{A}_5$ representation, where $V$ is the  irreducible representation of $\mathcal{A}_5$ of degree $4$. From the Hodge decomposition, we have $H^0(C; \CC)\cong V^{\oplus2}\oplus (I\oplus I')^{\oplus 2}$
\end{proof}
{We have a more explicit description on the homology of the smooth members of the Winger pencil.}
\begin{lem}\label{lemma:inthomology}
  There exists a $\ZZ\Is$-module $L$ which is free as a $\ZZ$-module of rank $10$ and admits a  $\ZZ$-basis
  $\alpha_1, \dots , \alpha_{10}$ such that the  20-element set $\{ \pm \alpha_i\}_{i=1}^{10}$ is an $\Is$-orbit. This $\ZZ\Is$-module is unique up to isomorphism.  If $C$ is a smooth member of the Winger pencil, then there exists an  $\Is$-equivariant embedding  $L\hookrightarrow H_1(C)$  whose image
  is a  Lagrangian sublattice and for which the cokernel can be $\Is$-equivariantly identified with $L^\vee$.
\end{lem}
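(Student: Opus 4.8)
The plan is to produce the lattice $L$ explicitly as a module induced from a point stabiliser, to recognise it inside the first homology of the totally degenerate member of the pencil, and then to propagate the resulting structure over the (connected) smooth locus.

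\emph{Construction and uniqueness of $L$.} Fix a subgroup $H_0\subset\Is$ of order $3$; all such are conjugate in $\Is\cong\Acal_5$. Its normaliser $\mathcal S_3:=N_\Is(H_0)$ has order $6$ and is isomorphic to $\mathcal S_3$ with $H_0$ as alternating subgroup; let $\varepsilon\colon\mathcal S_3\to\{\pm1\}$ be the sign character. I would set $L:=\operatorname{Ind}_{\mathcal S_3}^{\Is}(\varepsilon)$. Choosing coset representatives $g_1,\dots,g_{10}$ for $\Is/\mathcal S_3$ gives a $\ZZ$-basis $\alpha_i:=g_i\otimes 1$, and one checks directly from $gg_i=g_j h$ ($h\in\mathcal S_3$) that $g\alpha_i=\varepsilon(h)\alpha_j=\pm\alpha_j$, and that $\{\pm\alpha_i\}_{i=1}^{10}$ is a single $\Is$-orbit: it has $20$ elements because the elements of $\mathcal S_3$ outside $H_0$ send $\alpha_1$ to $-\alpha_1$, so $-\alpha_1$ lies in the orbit of $\alpha_1$, whose stabiliser is the order-$3$ group $g_1H_0g_1^{-1}$. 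For uniqueness, in any such $L'$ the set $S':=\{\pm\alpha_i'\}$ is a transitive $\Is$-set of size $20$, hence $\Is$-isomorphic to $\Is/H_0$ (one conjugacy class of order-$3$ subgroups), and negation is a fixed-point-free $\Is$-equivariant involution of $S'$; such an involution corresponds to the unique non-trivial element of $N_\Is(H_0)/H_0\cong\ZZ/2$ and is thus unique, and the $\ZZ\Is$-module $L'$ is determined by $S'$ together with this involution. Finally, by Frobenius reciprocity $L\otimes_\ZZ\CC\cong V\oplus I\oplus I'$: among the irreducible $\Acal_5$-representations, the restrictions to $\mathcal S_3$ of $V$, $I$ and $I'$ each contain $\varepsilon$ exactly once, while those of the trivial and the $5$-dimensional ones do not; this also exhibits $L$ as insensitive to the choice between $I$ and $I'$ (the restrictions of $I,I'$ to $\mathcal S_3$ coincide, cf.\ the proof of Lemma~\ref{h1CC}) and is consistent with that lemma. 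Note also that $L^\vee\cong L$, since $\varepsilon$ is self-dual and induction commutes with duality.

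\emph{Identification at the totally degenerate member.} Let $C_0$ be the member of the Winger pencil with $\lambda=\tfrac{27}{5}$. By its description it is the rational curve $K$ with the $20$-element orbit of face-barycentres (Lemma~\ref{lem:Korbit}) glued along the antipodal pairs of Remark~\ref{involution}, with normalisation $\nu\colon K\cong\PP^1\to C_0$ and $10$ nodes $p_1,\dots,p_{10}$ forming one $\Is$-orbit, $\operatorname{Stab}_\Is(p_i)\cong\mathcal S_3$. From the normalisation sequence $0\to\ZZ_{C_0}\to\nu_*\ZZ_{K}\to\bigoplus_i\ZZ_{p_i}\to0$ together with $H^\bullet(K)=H^\bullet(\PP^1)$ one gets $H^1(C_0;\ZZ)\cong\bigoplus_{i=1}^{10}(\text{stalk at }p_i)$; the stalk at $p_i$, namely $\ZZ^{\{a_i,b_i\}}/\ZZ\cdot(1,1)$ for its two branches $a_i,b_i$, is as a $\operatorname{Stab}_\Is(p_i)\cong\mathcal S_3$-module the sign character, because the elements exchanging the two branches act on it by $-1$. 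Hence $H^1(C_0;\ZZ)\cong\operatorname{Ind}_{\mathcal S_3}^{\Is}(\varepsilon)=L$, and $H_1(C_0;\ZZ)\cong L^\vee\cong L$.

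\emph{Passage to the smooth members.} Near $\lambda=\tfrac{27}{5}$ the Winger pencil is an $\Is$-equivariant family over a disc (with $\Is$ acting trivially on the base) whose total space is smooth at each node $p_i$ of $C_0$ and which smooths that node to first order: the nodes constitute the $\Is$-orbit of size $10$ in $\PP(U)\setminus K$ (poles of the secant lines joining antipodal pairs of the $20$-element orbit), so $p_i\notin K$, whence $F(p_i)\neq0$ (otherwise $Q(p_i)=0$ and $p_i\in K$), i.e.\ the defining equation $Q^3+\lambda F$ has non-vanishing $\lambda$-derivative at $p_i$. Thus for a smooth member $C$ near $C_0$ one obtains vanishing cycles $\delta_1,\dots,\delta_{10}\in H_1(C;\ZZ)$, one per node, choosable $\Is$-equivariantly; since an element fixing $p_i$ and exchanging its branches reverses $\delta_i$, the map $L\to H_1(C;\ZZ)$, $\alpha_i\mapsto\delta_i$, is $\Is$-equivariant, with image $\Lambda:=\bigoplus_i\ZZ\delta_i$ isotropic of rank $10$ (vanishing cycles of distinct nodes are disjoint, each of self-intersection $0$) and primitive — the standard primitivity of the vanishing-cycle lattice of a one-parameter smoothing, cf.\ the Clemens–Schmid sequence and the analogous step in \cite{eduard2019geometry}. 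Hence $\Lambda$ is Lagrangian with $\Lambda=\Lambda^\perp$, and as the intersection form is unimodular and $\Is$-invariant the pairing induces an $\Is$-equivariant isomorphism $H_1(C;\ZZ)/\Lambda\xrightarrow{\ \sim\ }\Lambda^\vee\cong L^\vee$. Finally the smooth locus of the Winger pencil is connected ($\PP^1$ minus four points) and $R^1\pi_*\ZZ$ over it is a local system of $\ZZ\Is$-modules whose monodromy consists of $\ZZ\Is$-linear symplectic isometries; parallel transport of $\Lambda$ along a path from a member near $C_0$ to an arbitrary smooth member $C$ therefore exhibits in $H_1(C;\ZZ)$ an $\Is$-stable Lagrangian sublattice isomorphic to $L$ with cokernel $\Is$-equivariantly $\cong L^\vee$, as required.

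\emph{Expected main difficulty.} The representation theory (existence, uniqueness, and complexification of $L$) is routine. The crux is the identification $H_1(C_0;\ZZ)\cong L$ as a $\ZZ\Is$-module, and in particular getting the sign twist right — recognising that a branch-exchanging symmetry reverses the corresponding vanishing cycle, so that the relevant induced module is $\operatorname{Ind}(\varepsilon)$ rather than $\operatorname{Ind}(\mathbf 1)$ — together with the two standard but non-formal inputs in the last step: that the pencil genuinely smooths $C_0$ to first order at each node (which reduces to the orbit-theoretic fact that the nodes avoid $K$, hence the base locus), and the primitivity of the vanishing-cycle lattice.
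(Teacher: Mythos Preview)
Your proof is correct and shares its geometric core with the paper's---both pivot on the vanishing cycles at the $10$-nodal member $C_{27/5}$---but your presentation is more algebraic and explicit. The paper defines $L$ directly as the image of $H_1(A)\to H_1(C)$ (with $A$ the union of vanishing circles), obtains primitivity from the long exact sequence of the pair $(C,A)$ via the identification $H_*(C;A)\cong H_*(C_{27/5};N)$ and torsion-freeness of $H_1(C_{27/5};N)$, and proves the single $20$-orbit by contradiction: were the $\pm\delta_i$ to split into two $10$-element orbits, the sum over one orbit would be a nonzero $\Is$-invariant in $H_1(C)$, contrary to Lemma~\ref{h1CC}. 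You instead build $L$ abstractly as $\operatorname{Ind}_{\mathcal S_3}^{\Is}(\varepsilon)$, deduce uniqueness by classifying transitive $\Is$-sets of size $20$ together with their unique equivariant fixed-point-free involution, and establish the sign directly (a branch-swapping element of the node stabiliser reverses the vanishing cycle). Your route yields more structure---the explicit induced-module description, the complexification via Frobenius reciprocity, and self-duality $L\cong L^\vee$---at the cost of having to verify the $\varepsilon$-twist carefully; the paper's contradiction argument is slicker for the orbit claim but leaves the $\ZZ\Is$-module structure of $L$ implicit, and its one-line uniqueness justification is rather terse compared with yours. Your additional check that the pencil genuinely smooths each node to first order ($F(p_i)\neq 0$ because $p_i\notin K$) is not needed in the paper's purely topological setup but is a welcome sanity check.
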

\begin{proof}
  The map $\mathscr{W}\rightarrow B$ is proper, and hence topologically locally trivial over the locus where it is smooth. So it is enough to prove this for just one of its smooth members. So without loss of generality we can assume that $C=C_t$ with $t$  close to $\frac{27}{5}$. Recall that the singular set $N$  of $C_{\frac{27}{5}}$ consists of 10 nodes and that the normalization of $C_{\frac{27}{5}}$ is a projective line. The nodes give 10 `vanishing circles' whose union $A$ has the property that $H_*(C; A)$ is naturally (hence $\Acal_5$-equivariantly) identified  with $H_*(C_{\frac{27}{5}}; N)$. The long exact homology sequence of the pair $(C, A)$
  \begin{equation*}
    \xymatrix{0\ar[r]&H_2(C)\ar[r]&H_2(C;A)\ar[r]&H_1(A)\ar[r]&H_1(C)\ar[r]&H_1(C;A)\ar[r]&\cdots}
  \end{equation*}
  shows that the map $H_1(A)\to H_1(C)$ is injective and that its  cokernel embeds in  $H_1(C_{\frac{27}{5}}; N)$. The latter is torsion free and so the  image $L$ of  $H_1(A)\to H_1(C)$ is a primitive sublattice of $H_1(C)$ of rank 10. It is also clear that the intersection product is identically zero on $L$, so that $L$ is in fact a Lagrangian submodule of
  $H_1(C)$. This gives the identification of $H_1(C)/L\cong L^{\vee}$. We thus obtain an exact sequence
  \begin{equation*}\label{111}
    \xymatrix{0\ar[r]&L\ar[r]&H_1(C)\ar[r]&L^{\vee}\ar[r]&0}.
  \end{equation*}
  Now $L$ contains 20 vanishing cycles as 10 antipodal pairs. Since  $\Is$ acts  transitively on $N$,  the vanishing cycles make up either a single $\Is$-orbit of 20 elements or two 10-element orbits $\Is$-orbit that are opposite each other. In the last case, the sum of the elements of one such orbit would give a nonzero $\Is$-invariant  element of $H_1(C)$ and this would contradict Lemma \ref{h1CC}. Hence the 20  vanishing cycles make up one orbit. The uniqueness comes from the fact that $\mathcal{A}_5$ has an unique subgroup who has 20 different conjugacy classes.
\end{proof}

Each smooth member $C$ of our pencil is $\Is$-equivariantly embedded in $\PP(U)$. This embedding is defined by a complete linear system
that is $\Is$-invariant. The following proposition states that such linear system is unique.

\begin{prop}\label{unilib}
  A smooth member $C$ of the family $\mathscr{W}$ admits {exactly} one very ample complete linear system of dimension $2$.
\end{prop}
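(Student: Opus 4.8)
The plan is to reduce the uniqueness to a rigidity statement about cube roots of the canonical bundle, and then invoke classical facts on plane curves together with the faithfulness of the canonical representation. First I would note that if $|M|$ is a very ample complete linear system of dimension $2$ on $C$, with underlying line bundle $M$, then it embeds $C$ as a smooth plane curve of degree $d$; since such a curve has genus $\binom{d-1}{2}$ and $C$ has genus $10$, necessarily $d=6$, and the adjunction formula gives $\omega_C\cong M^{\otimes 3}$. Writing $M_0:=\Ocal_C(1)$ for the line bundle of the given $\Is$-equivariant embedding $C\hookrightarrow\PP(U)$, we likewise have $\omega_C\cong M_0^{\otimes 3}$, so that $\eta:=M\otimes M_0^{-1}$ is a $3$-torsion point of $J(C)=\Pic^0(C)$. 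The proposition is thus equivalent to the assertion $\eta=0$.

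Assume $\eta\neq 0$; I would derive a contradiction. Here I would use the classical description of the pencils of minimal degree on a smooth plane sextic: such a curve has gonality $5$, and every $g^1_5$ on it is cut out by projection from a point of the curve. Thus the Brill--Noether locus $W^1_5(C)\subset\Pic^5(C)$ equals $\{\,M_0\otimes\Ocal_C(-p):p\in C\,\}$, an irreducible curve which is the image of the closed embedding $p\mapsto M_0\otimes\Ocal_C(-p)$. Since $M$ is also a very ample complete $g^2_6$, the same description gives $W^1_5(C)=\{\,M\otimes\Ocal_C(-q):q\in C\,\}$, so these two copies of $C$ inside $\Pic^5(C)$ must coincide. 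Composing the two resulting isomorphisms $C\xrightarrow{\ \sim\ }W^1_5(C)\xrightarrow{\ \sim\ }C$ produces an automorphism $\sigma$ of $C$ characterised by $\Ocal_C\big(\sigma(q)-q\big)\cong M_0\otimes M^{-1}$ for all $q\in C$.

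To conclude, I would observe that such a $\sigma$ acts as the identity on $J(C)$: on a degree-zero divisor $D=\sum_i a_ip_i$ the extra twists cancel, $\Ocal_C(\sigma_*D)\cong\Ocal_C(D)\otimes(M_0\otimes M^{-1})^{\sum_i a_i}\cong\Ocal_C(D)$. Hence $\sigma$ acts trivially on the tangent space $H^1(C,\Ocal_C)$ of $J(C)$ at the origin, therefore also on $H^0(C,\omega_C)$ and on $\PP\big(H^0(C,\omega_C)^\vee\big)$; since $C$, being a smooth plane sextic, is non-hyperelliptic, its canonical map is a closed embedding, so $\sigma=\Id_C$. But then $M_0\otimes M^{-1}\cong\Ocal_C(\sigma(q)-q)=\Ocal_C$, i.e. $\eta=0$, a contradiction. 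Hence $\eta=0$ and $M\cong M_0$.

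The only genuinely non-formal ingredient is the classical fact that the $g^1_5$'s of a smooth plane sextic are exactly its point-projections (equivalently, that such a curve has a unique $g^2_6$, which one could also simply quote), and I expect this to be the main obstacle: the nonzero $3$-torsion ambiguity cannot be removed by elementary means without it. For an argument closer in spirit to the present paper one could instead first establish $J(C)[3]^{\Is}=0$ — reduce the sequence $0\to L\to H_1(C)\to L^{\vee}\to 0$ of Lemma~\ref{lemma:inthomology} modulo $3$ and compute the $\Is$-invariants using $L\cong\mathrm{Ind}_{\Scal_3}^{\Acal_5}(\mathrm{sgn})$ and Frobenius reciprocity, noting that the sign character of $\Scal_3$ has no invariant vector over $\mathds{F}_3$ — so that $M_0$ is the unique $\Is$-invariant system of this kind; but this still requires knowing that every very ample complete $g^2_6$ is $\Is$-invariant, which again rests on the uniqueness of the plane model.
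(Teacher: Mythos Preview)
Your argument is correct. The paper itself gives no proof at all: it simply cites Lemma~2.1 of Tyurin's paper \cite{tyurin1975intersection}, which is precisely the classical statement that a smooth plane curve of degree $d\ge 4$ carries a unique $g^2_d$. What you have written is essentially the standard proof of that classical fact, phrased via $W^1_5(C)$ and the rigidity of the induced automorphism $\sigma$ on the Jacobian. In particular, your argument makes no use of the $\Is$-action, which is appropriate since the uniqueness holds for every smooth plane sextic.

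Two remarks on the comparison. First, you correctly identify that your ``non-formal ingredient'' (all $g^1_5$'s are point projections) is equivalent in depth to the statement being proved; indeed, the usual proofs of these two facts are intertwined, and many references simply prove the uniqueness of the $g^2_d$ directly. So your write-up is best read as an unpacking of the citation rather than an independent route. Second, your proposed alternative via $J(C)[3]^{\Is}=0$ is an attractive idea that fits the spirit of the paper, and your computation with $L\cong\mathrm{Ind}_{\Scal_3}^{\Acal_5}(\mathrm{sgn})$ is along the right lines; but as you yourself note, it only yields uniqueness among $\Is$-invariant systems and cannot by itself show that an arbitrary very ample $g^2_6$ is $\Is$-invariant, so it does not replace the classical input. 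The paper's later uses of Proposition~\ref{unilib} (notably Corollaries~\ref{nonextension} and~\ref{a5equiv}) genuinely need uniqueness among \emph{all} such linear systems, not just the $\Is$-invariant ones.
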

\begin{proof}
  This is Lemma 2.1 in \cite{tyurin1975intersection}.
\end{proof}

We derive from this uniqueness property two corollaries.

\begin{cor}\label{nonextension}
  For no member of the family $\mathscr{W}$, the  $\mathcal{A}_5$-action extends to an $\mathcal{S}_{5}$-action.
\end{cor}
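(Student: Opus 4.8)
The plan is to argue by contradiction, leveraging Proposition \ref{unilib} and the representation-theoretic information from Lemma \ref{h1CC}. Suppose that for some member $C$ of $\mathscr{W}$ the $\mathcal{A}_5$-action extends to a faithful $\mathcal{S}_5$-action. We may assume $C$ is smooth (the two copies of $3K$ are excluded from the family of genuine genus $10$ curves, and in any case one could run the argument on the replacement curve $C_K$ separately, but let me focus on the smooth fibers). The key observation is that $C$ carries a distinguished very ample complete linear system of dimension $2$: by Proposition \ref{unilib} it is the \emph{unique} such $g^2_6$, hence it is preserved by every automorphism of $C$, in particular by the hypothetical $\mathcal{S}_5$-action. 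Therefore the $\mathcal{S}_5$-action on $C$ would linearize: it would be induced by a $3$-dimensional projective representation of $\mathcal{S}_5$, i.e.\ a homomorphism $\mathcal{S}_5 \to \mathrm{PGL}(U)$ extending the given embedding $\mathcal{A}_5 \cong \Is \hookrightarrow \mathrm{PGL}(U) = \mathrm{Aut}(\PP(U))$.

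Next I would derive a contradiction from the non-existence of such an extension at the level of (projective) representations. First I would check that $\mathcal{S}_5$ has no faithful $3$-dimensional complex linear representation: its smallest faithful irreducibles have dimension $4$ (the two $4$-dimensional representations), and the only lower-dimensional representations are the trivial and sign characters, so $\mathcal{S}_5$ cannot act faithfully on $U$ itself. One then must rule out the projective case: a homomorphism $\mathcal{S}_5 \to \mathrm{PGL}_3(\CC)$ restricting to the irreducible $\icosa$-action. Since $H^2(\mathcal{S}_5;\CC^\times)$ and the Schur multiplier of $\mathcal{S}_5$ are small (the Schur multiplier is $\ZZ/2$), any such projective representation lifts, after passing to the relevant central extension, to a linear representation of a double cover $\tilde{\mathcal{S}}_5$ on $U$; but the restriction of this lift to $\Is \cong \mathcal{A}_5$ would have to be (a twist of) $I$ or $I'$, and one checks from the character table of $\tilde{\mathcal{S}}_5$ (or directly) that neither $I$ nor $I'$ extends to a $3$-dimensional representation of any index-two overgroup — indeed $\iota$ swaps $I$ and $I'$, so an $\mathcal{S}_5$-representation restricting to $I$ would force $I \cong \iota^*I = I'$, which is false. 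This is exactly the phenomenon that motivated taking two copies of the pencil in the first place.

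Assembling the argument: the $\mathcal{S}_5$-action on $C$ must preserve the unique $g^2_6$, hence comes from $\mathcal{S}_5 \to \mathrm{PGL}(U)$; restricting to $\mathcal{A}_5$ gives the irreducible action on $U$, forcing $I \cong I'$ as $\mathcal{A}_5$-representations; this contradicts the fact (noted in Section \ref{plpp}) that $I$ and $I'$ are distinct and exchanged by the outer automorphism $\iota$. Hence no such extension exists. I expect the main obstacle to be the clean handling of the projective-to-linear lifting step: one wants to be sure that an $\mathcal{S}_5$-action on $\PP(U)$ restricting to the given $\mathcal{A}_5$-action really does pin down a well-defined induced action on $I \oplus (\text{character})$ of $\mathcal{A}_5$ — the safest route is to pass to $\mathrm{SL}_3$ via the binary-type central extension as in the $\tilde{\Is}$ diagram of Section \ref{plpp}, observe that the linear action of $\Is$ on $U$ is already defined (not merely projectively), and then note that an extension of the projective $\mathcal{S}_5$-action would normalize $\Is \subset \mathrm{GL}(U)$ and hence act on the isotypic decomposition of $U$, but $U$ is $\Is$-isotypic of a type that $\iota$ does not fix, giving the contradiction directly without any cohomological subtlety.
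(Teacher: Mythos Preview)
Your argument for the smooth members is correct and follows the same skeleton as the paper's proof: use Proposition~\ref{unilib} to conclude that any extension to $\mathcal{S}_5$ must act on $\PP(U)$, then rule this out. The paper invokes Lemma~\ref{s5p1p2} (Schur's classification of projective representations of $\mathcal{S}_5$), whereas you give a more self-contained argument via $I\not\cong I'$; both are fine.

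However, there is a genuine gap. The corollary concerns \emph{every} member of $\mathscr{W}$, and besides the smooth fibers and $3K$ the family contains three further singular fibers in each component: the six-line configuration $C_\infty$, the irreducible $6$-nodal curve $C_{-1}$, and the irreducible $10$-nodal curve $C_{27/5}$. You dismiss the non-smooth cases in a parenthetical, but Proposition~\ref{unilib} is stated only for smooth members, so your linear-system argument does not apply to them, and the corollary as stated requires them to be treated. The paper handles each singular type by a separate ad hoc argument: for $3K$ via the rotation group of the icosahedron; for $C_\infty$ by computing $\Aut(C_\infty)=\mathcal{A}_5$ directly from the line configuration; for $C_{-1}$ by passing to the normalization (Bring's curve) and observing that a point in a size-$12$ $\mathcal{S}_5$-orbit would need a cyclic stabilizer of order $10$, which $\mathcal{S}_5$ does not contain; and for $C_{27/5}$ by normalizing to $\PP^1$ and applying Lemma~\ref{s5p1p2}. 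You need either to supply analogous arguments for these four cases, or to explain why your approach extends to them (it does not straightforwardly, since singular plane curves need not have a unique $g^2_d$, and in any case Proposition~\ref{unilib} is not available).
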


The proof uses the following lemma.

\begin{lem}\label{s5p1p2}
  A nontrivial (hence faithful) action   of $\Acal_5$ on $\mathds{P}^1$  resp.\  $\mathds{P}^2$ does not extend to $\mathcal{S}_5$.
\end{lem}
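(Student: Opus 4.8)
The plan is to handle the two cases ($\PP^1$ and $\PP^2$) by the same structural argument: reduce the question of extending an $\Acal_5$-action to a group-theoretic question about lifting projective representations, and then invoke the (non)existence of the relevant linear representations of $\Scal_5$ and of its Schur cover. First I would recall that $\Aut(\PP^1)=\PSL_2(\CC)$ and $\Aut(\PP^2)=\PGL_3(\CC)$, so a faithful $\Acal_5$-action on $\PP^1$ (resp.\ $\PP^2$) is a faithful homomorphism $\Acal_5\to\PSL_2(\CC)$ (resp.\ $\Acal_5\to\PGL_3(\CC)$), and an extension to $\Scal_5$ would be a faithful homomorphism $\Scal_5\to\PSL_2(\CC)$ (resp.\ $\Scal_5\to\PGL_3(\CC)$) restricting to the given one.

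For $\PP^1$: suppose $\Scal_5$ acted faithfully on $\PP^1$. Pulling back along $\SL_2(\CC)\to\PSL_2(\CC)$ gives a central extension $\widetilde{\Scal_5}$ of $\Scal_5$ by $\mu_2$ admitting a faithful $2$-dimensional representation. I would then argue this is impossible: the Schur multiplier of $\Scal_5$ is $\ZZ/2$, so there are only two such central extensions (the split one $\Scal_5\times\mu_2$ and the non-split double cover $\widetilde{\Scal_5}$); neither has a faithful irreducible $2$-dimensional representation — the split one has no faithful irreducible representation of degree $<4$ at all (its faithful irreducibles have degree $\ge 4$, matching those of $\Scal_5$ which are $1,1,4,4,5,5,6$), and the spin double cover of $\Scal_5$ has smallest faithful irreducible of degree $4$ as well. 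A cleaner route: the binary icosahedral group $\tilde\Is$ (the preimage of $\Acal_5$ in $\SU_2$, introduced above) is the unique perfect central extension of $\Acal_5$ by $\mu_2$, and it is well known that $\tilde\Is$ has no normalizer properly containing it inside $\SU_2/\text{center}$ in a way compatible with an outer automorphism — more precisely, any $\Scal_5\hookrightarrow\PSL_2(\CC)$ would make $\Acal_5\subset\PSL_2(\CC)$ have a normalizer of order $120$, but the normalizer of the icosahedral subgroup of $\mathrm{SO}(3)=\PSL_2(\CC)$ is $\Acal_5$ itself (the icosahedral rotation group is maximal finite, equal to its own normalizer in $\mathrm{SO}(3)$). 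This contradiction finishes the $\PP^1$ case. The analogous statement for $\PP^2$ I would reduce to linear representations: a faithful $\Acal_5$-action on $\PP^2$ lifts (since $\Acal_5$ is perfect, hence equals its commutator subgroup, and $H^2(\Acal_5;\CC^\times)$-obstructions can be killed by passing to $\SL_3$ where the relevant lift exists because the two $3$-dimensional representations $I,I'$ are genuine linear representations) to one of $I,I'$; an extension to $\Scal_5$ would give a faithful projective representation of $\Scal_5$ of degree $3$, hence after lifting a genuine (projective-equivalent) representation of degree $3$ of $\Scal_5$ or of its Schur double cover, and neither group has an irreducible faithful $3$-dimensional representation (the irreducible degrees of $\Scal_5$ are $1,1,4,4,5,5,6$, and the double cover's faithful irreducibles have degrees $4,4,6$); a reducible $3$-dimensional faithful representation of $\Scal_5$ is impossible because $\Scal_5$ has no faithful representation of degree $\le 2$. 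So the extension cannot exist.

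The step I expect to be the main obstacle is the clean handling of the projective-to-linear lifting in the $\PP^2$ case: one must be careful that an $\Acal_5$-subgroup of $\PGL_3(\CC)$ genuinely comes from $I$ or $I'$ (not merely up to an irrelevant central twist), and that an $\Scal_5$ overgroup, which need not lift to $\SL_3(\CC)$ on the nose, still forces a degree-$3$ projective representation of $\Scal_5$ whose obstruction class in $H^2(\Scal_5;\CC^\times)=\ZZ/2$ is controlled — i.e.\ that we only have to rule out linear degree-$3$ representations of $\Scal_5$ itself and of its unique nontrivial double cover. Once that bookkeeping is set up, the conclusion is immediate from the two character tables (that of $\Scal_5$, partly reproduced in Table \ref{chartabS5}, and that of $2.\Scal_5$), since in neither case does the number $3$ occur as a degree of a faithful irreducible, and faithful reducibles of degree $3$ are excluded for dimension reasons.
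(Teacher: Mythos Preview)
Your approach is essentially the same as the paper's: both reduce the question to the nonexistence of faithful projective representations of $\Scal_5$ in degree $2$ or $3$, and settle this via Schur's classification. The paper simply cites Schur (via Hoffman--Humphreys) for the fact that the projective irreducibles of $\Scal_5$ have degrees $1,4,5,6$; you instead unpack this by computing the Schur multiplier $H^2(\Scal_5,\CC^\times)\cong\ZZ/2$ and checking the character degrees of $\Scal_5$ and of its double cover(s) directly. That is a legitimate and more self-contained route to the same conclusion.

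Two small points to clean up. First, in your alternative ``normalizer'' argument for $\PP^1$ you write $\mathrm{SO}(3)=\PSL_2(\CC)$; this is false ($\mathrm{SO}(3)\cong\PSU_2$, whereas $\PSL_2(\CC)$ is the full M\"obius group). The argument can be salvaged by first observing that any finite subgroup of $\PSL_2(\CC)$ is conjugate into $\PSU_2$, and then using that the icosahedral group is self-normalizing in $\PSU_2\cong\mathrm{SO}(3)$; but as written the identification is wrong. Second, it is a minor imprecision to speak of \emph{the} non-split double cover of $\Scal_5$: for $n\ge 4$ there are two non-isomorphic double covers $2.\Scal_n^{\pm}$, though they are isoclinic and share the same set of projective degrees, so your degree count (faithful spin irreducibles of degree $4,4,6$) and hence your conclusion are unaffected.
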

\begin{proof}
  The classification of all projective representations of symmetric and alternating groups is done by Schur in \cite{schur1911darstellung}. We suggest an English version of this theory by \cite{hoffman1992projective}.  By the Example of \cite{hoffman1992projective} in Page 79, all the projective representations of $\mathcal{S}_5$ are of degree 1, 4, 5 and 6. So there is no faithful $\mathcal{S}_5$ action of degree 2 or 3.
\end{proof}

\begin{proof}[Proof of Corollary \ref{nonextension}]
  If $3K$ admits a faithful $\mathcal{S}_5$-action, then $\mathcal{S}_5$ will act faithfully on a regular icosahedron. This gives an injective group homomorphism from $\mathcal{S}_5$ to the orientation preserving automorphisms of this icosahedron. But we know the latter group is $\Acal_5$ and so  this is impossible.

  For $C_{\infty}$ we prove this as follows. Recall that $C_{\infty}$ consists of $6$ lines. Since  $\Acal_5$ acts transitively on these, the stabilizer
  of one such  line $\ell$  must be order $10$. The subgroups of  $\Acal_5$ of order $10$ are dihedral and make up a single conjugacy class.
  The five other lines meet $\ell$ in as many distinct points and for a suitable affine coordinate on $\ell$ they are given by the $5th$ roots of unity.
  Since the M\"obius transformations of $\PP^1$ preserving the $5th$ roots of unity form a dihedral group of order $10$,  $\Acal_5$ must be the full
  automorphism group of $C_{\infty}$. In particular, the $\Acal_5$-action on $C_{\infty}$ cannot extend to an $\mathcal{S}_5$-action.

  Suppose that a singular member with 6 nodes,  $C_{-1}$, say, admits a $\mathcal{S}_5$-action. Then the 6 nodes must form an irregular
  $\mathcal{S}_5$-orbit. We normalize $C_{-1}$ to get a nonsingular algebraic curve of genus 4. The $\mathcal{S}_5$-action lifts naturally by the universal property of normalization. Then the 6 nodes become a size 12 orbit of $\mathcal{S}_5$. So the stabilizer of a point of this orbit has order $10$.
  Since the point stabilizers of   a faithful finite group action on a smooth curve are cyclic (see  Lemma \ref{faithstab} below), this would imply that
  $\mathcal{S}_5$ contains an element  of order 10, which is clearly not the case. So the $\Acal_5$-action on $C_{-1}$ does not extend to a $\mathcal{S}_5$-action.

  Suppose that a singular member with 10 nodes admits a $\mathcal{S}_5$-action. After normalizing it, we get a non-singular algebraic curve of genus zero,  i.e.,  a copy of  $\mathds{P}^1$, to which the  $\mathcal{S}_5$-action lifts naturally. By Lemma \ref{s5p1p2} this is impossible.

  Finally, suppose  that for a smooth member $C$ of the Winger  family, the $\mathcal{A}_5$-action extends to a $\mathcal{S}_5$-action.
  Since $\Acal_5$ is normal in $\mathcal{S}_5$, any  element of $\mathcal{S}_5$ then takes the linear system of linear sections to an $\Acal_5$-invariant complete linear system of dimension 2.
  By Proposition \ref{unilib} this linear system is the same. This proves that this linear system is in fact $\mathcal{S}_5$-invariant. So the $\mathcal{S}_5$-action on $C$ then extends to an action on $\PP(U)$. But this is impossible by Lemma \ref{s5p1p2}.
\end{proof}

\begin{rmk}\label{norS5act}
  Although $C_{-1}$ admits no $\Scal_5$-action, its normalization $\tilde{C}_{-1}$ admits a $\Scal_5$-action. Consider the projective space $\PP W_4$ with faithful $\Scal_5$-action where $W_4$ is the dimension 4 irreducible representation of $\Scal_5$. The curve $\tilde{C}_{-1}$ can be obtained as the complete intersection of a $\Scal_5$-invariant quadratic surface and a $\Scal_5$-invariant cubic surface. The smooth curve $\tilde{C}_{-1}$ has an irregular orbit of size 24, while treating as $\Acal_5$-curve this irregular orbit splits into two size 12 orbits. Hence such $\Scal_5$-action doesn't descends to $C_{-1}$. Such curve is also called Bring's Curve. For more details see Chapter 5 of \cite{Cheltsov2015Cremona}.
\end{rmk}

\begin{cor}\label{a5equiv}
  Any two distinct members of the family $\mathscr{W}$ are not isomorphic as genus 10 curves endowed with a faithful $\mathcal{A}_5$-action, i.e., we cannot find an $\mathcal{A}_5$-equivariant isomorphism between them.
\end{cor}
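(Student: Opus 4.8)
The plan is to turn the uniqueness statement of Proposition~\ref{unilib} into a rigidity statement, in the spirit of the proof of Corollary~\ref{nonextension}. Index the two copies of the pencil by $a\in\{1,2\}$; on copy~$a$ all curves sit in one fixed $\PP(U)$, and the $\Acal_5$-action is taken through an identification $\jmath_a\colon\Acal_5\xrightarrow{\ \sim\ }\Is$ with $\jmath_2=\jmath_1\circ\iota$. Suppose $\phi\colon C\to C'$ is an $\Acal_5$-equivariant isomorphism, with $C$ in copy~$a$ and $C'$ in copy~$b$, and assume first that $C$ and $C'$ are smooth. Each is embedded in $\PP(U)$ by the unique very ample complete linear system of dimension~$2$ it carries (Proposition~\ref{unilib}), so $\phi$ must respect this system and is therefore the restriction of a unique $\widehat\phi\in\operatorname{PGL}(U)$; since the system is $\Is$-invariant and $\phi$ is equivariant, conjugation by $\widehat\phi$ sends $\jmath_a(h)$ to $\jmath_b(h)$ for every $h\in\Acal_5$. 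Thus $\widehat\phi$ normalises $\Is$ and induces on it the automorphism $\jmath_b\jmath_a^{-1}$.

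Note that $\Is$ has trivial centraliser in $\operatorname{PGL}(U)$: a lift to $\GL(U)$ of a centralising element would commute with $\Is$ up to a character of $\Acal_5$, which is trivial because $\Acal_5$ is perfect, hence would be scalar by Schur's lemma ($\Is$ acting irreducibly on $U$). Consequently, if $a=b$ then $\widehat\phi$ centralises $\Is$ and so $\widehat\phi=\mathrm{id}_{\PP(U)}$, whence $C=C'$ as plane curves, i.e.\ they are the same member of the pencil. If instead $a\neq b$, then $\jmath_b\jmath_a^{-1}$ is an outer automorphism of $\Is$, and the conjugation action on $\Is$ gives an embedding $\langle\Is,\widehat\phi\rangle\hookrightarrow\Aut(\Acal_5)\cong\mathcal{S}_5$ whose image strictly contains $\Inn(\Acal_5)$, hence equals all of $\mathcal{S}_5$. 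Then $\mathcal{S}_5$ acts on $\PP(U)=\PP^2$ extending the $\Acal_5$-action, which is forbidden by Lemma~\ref{s5p1p2}. This settles the statement for smooth members.

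Inside a single copy there is nothing more to check for the singular members, since the four special fibres there are pairwise non-isomorphic already as abstract curves (the triple conic is non-reduced; $C_\infty$ is reducible whereas the $6$- and $10$-nodal members are irreducible; those two differ in their number of nodes), and each type occurs once. A cross-copy isomorphism is excluded much as in Corollary~\ref{nonextension}. For the triple conic and the $10$-nodal member it would descend to, respectively lift to, an $\Acal_5$-equivariant automorphism of the conic $K\cong\PP^1$; running the previous paragraph with $\operatorname{PGL}_2(\CC)$ in place of $\operatorname{PGL}(U)$ then produces an $\mathcal{S}_5$ acting on $\PP^1$ extending the icosahedral action, again impossible by Lemma~\ref{s5p1p2}. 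For $C_\infty$, whose automorphism group is $\Is\cong\Acal_5$ by the proof of Corollary~\ref{nonextension}, the copy-$1$ and copy-$2$ actions on it differ by the outer $\iota$, hence are not intertwined by any element of $\Aut(C_\infty)=\Is$. The $6$-nodal Bring member is handled in the same way once one checks, using its normalisation and the fact that the $\mathcal{S}_5$-orbit of $24$ points over its $6$ nodes splits as two $\Acal_5$-orbits of $12$, that its automorphism group is also $\Acal_5$.

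The only \emph{external} ingredient is Proposition~\ref{unilib}; granted that, the argument rests entirely on two facts about $\Acal_5$: it is perfect (so an equivariant projective isomorphism intertwines the two representations up to a character that must be trivial, making the relevant centralisers scalar), and its non-trivial outer automorphism is not induced by conjugation inside $\operatorname{PGL}_2(\CC)$ or $\operatorname{PGL}_3(\CC)$ (Lemma~\ref{s5p1p2}). The point I expect to be most delicate is the cross-copy comparison of the $6$-nodal member, where one must pin down the automorphism group of the abstract curve; alternatively this case also drops out a posteriori from the later fact that $\overline{\Mcal}_{10}^{\Acal_5}$ has two connected components interchanged by $\iota$.
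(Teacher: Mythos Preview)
Your treatment of the smooth members is correct and close to the paper's. The paper phrases the cross-copy case slightly differently: it simply notes that the induced $\Acal_5$-equivariant linear map $U\to U'$ between two non-isomorphic irreducibles must vanish by Schur, rather than building an $\Scal_5$ inside $\PGL(U)$. Both arguments are valid and equivalent in strength.

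Where you and the paper genuinely diverge is in the singular members. You argue case by case, computing or citing the automorphism groups of $3K$, $C_\infty$, the $10$-nodal curve, and finally $C_{-1}$. The paper instead gives a \emph{uniform} argument for all singular members $C\neq 3K$: every such $C$ contains the size-$12$ base orbit of the pencil as smooth points with stabiliser $\mu_5$, and Lemma~\ref{faithstab} equips each such point $p$ with a canonical generator $g_p$ of its stabiliser; the paper then observes that under the two actions $\rho,\rho'$ these canonical generators are forced to lie in the two \emph{distinct} $\Acal_5$-conjugacy classes of $5$-cycles, which no automorphism of $C$ can reconcile. This bypasses any computation of $\Aut(C)$.

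Your approach works for $3K$, $C_\infty$, and the $10$-nodal member using only what is in the paper, but for the $6$-nodal Bring member it leans on $\Aut(\tilde C_{-1})=\Scal_5$, a true classical fact that is neither stated nor proved here (Remark~\ref{norS5act} only asserts that $\Scal_5$ acts, not that it is the full group). You flag this honestly. Note however that your proposed fallback---deducing it a posteriori from $\overline{\Mcal}_{10}^{\Acal_5}$ having two components---is circular: this corollary feeds into Theorem~\ref{smfam}, which in turn feeds into the proof of Theorem~\ref{main1}. So you do need an independent argument here, and the paper's canonical-generator trick via Lemma~\ref{faithstab} is both cleaner and self-contained; I would recommend adopting it for the singular cases.
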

We begin with a lemma {that is a special case of what is shown Section 2.2 of \cite{akhiezer2012lie} and} which will also be used later

\begin{lem}\label{faithstab}
  Let $G$ be a finite group acting faithfully on a connected nonsingular complex curve $C$. Then the stabilizer $G_x\ (x\in C)$ is cyclic and comes with a canonical generator $\mu$ such that for any $G$-automorphism $h$ of $C$ conjugation with $h$ takes the canonical generator of $G_x$ to the one of $G_{h(x)}$.
\end{lem}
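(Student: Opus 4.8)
The plan is to analyze the action of the stabilizer $G_x$ on the tangent space $T_xC$. First I would observe that since $C$ is a nonsingular complex curve, $T_xC$ is a $1$-dimensional complex vector space, and the differential of the $G_x$-action at the fixed point $x$ yields a linear representation $\rho\colon G_x\to \GL(T_xC)\cong\CC^\times$. The key point is that $\rho$ is \emph{injective}: if $g\in G_x$ acts trivially on $T_xC$, then by the holomorphic Lefschetz fixed point theorem (or simply by choosing a local holomorphic coordinate centered at $x$ in which $g$ linearizes — possible because $g$ has finite order, so one can average a coordinate over $\langle g\rangle$), the germ of $g$ at $x$ is the identity, and since $C$ is connected a holomorphic automorphism fixing a point to first order and agreeing with the identity on a neighborhood must be the identity globally. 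Hence $G_x$ embeds in $\CC^\times$, so $G_x$ is a finite subgroup of $\CC^\times$, i.e., cyclic, say of order $n$, and the canonical generator $\mu$ is the unique element of $G_x$ whose action on $T_xC$ is multiplication by $e^{2\pi i/n}$ (the primitive $n$-th root of unity with smallest positive argument).

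Next I would check the naturality statement. Let $h$ be a $G$-automorphism of $C$, i.e., $h\colon C\to C$ is an automorphism commuting with the $G$-action (equivalently, $h$ is a morphism of $G$-curves). Then $h$ conjugates $G_x$ isomorphically onto $G_{h(x)}$: indeed for $g\in G_x$ one has $g\cdot h(x)=h(g\cdot x)=h(x)$ using that $h$ is $G$-equivariant, wait — more precisely the statement ``conjugation with $h$'' should be read as: the isomorphism $G_x\to G_{h(x)}$ is simply $g\mapsto g$, because if $h$ commutes with every element of $G$ then $g$ fixes $h(x)$ whenever $g$ fixes $x$; and the claim is that this identity map sends $\mu_x$ to $\mu_{h(x)}$. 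To see this, differentiate: $dh_x\colon T_xC\to T_{h(x)}C$ is a $\CC$-linear isomorphism, and for $g\in G_x$ the equivariance $h\circ g=g\circ h$ gives $dh_x\circ \rho_x(g)=\rho_{h(x)}(g)\circ dh_x$ as maps $T_xC\to T_{h(x)}C$; since $\rho_x(g)$ and $\rho_{h(x)}(g)$ are multiplications by scalars, this forces $\rho_x(g)=\rho_{h(x)}(g)$. Thus the injection into $\CC^\times$ is the same for $G_x$ and $G_{h(x)}$ under this identification, and the canonical generator (the preimage of $e^{2\pi i/n}$) is carried to the canonical generator.

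Finally I should address the slightly more general phrasing where ``conjugation with $h$'' genuinely means $g\mapsto hgh^{-1}$ — this is the relevant form when $h$ normalizes rather than centralizes the $G$-action, e.g., when $h$ is induced by an outer automorphism $\iota$ of $G$. In that case $hgh^{-1}$ lands in $G_{h(x)}$ because $hgh^{-1}\cdot h(x)=hg\cdot x=h(x)$, and the same differentiation argument applies: $dh_x$ intertwines $\rho_x(g)$ with $\rho_{h(x)}(hgh^{-1})$, so the two embeddings into $\CC^\times$ agree up to the relabeling $g\mapsto hgh^{-1}$, hence $\mu_x\mapsto \mu_{h(x)}$.

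The only genuine obstacle is the injectivity of $\rho$, i.e., that a nontrivial automorphism of finite order cannot act trivially on the tangent space at a fixed point; once linearization in a local coordinate is invoked this is immediate, and the rest is bookkeeping with the linear algebra of one-dimensional representations. I would cite the local-linearization fact (finite-order holomorphic automorphisms are linearizable near a fixed point) rather than reprove it, referring to the same source (\cite{akhiezer2012lie}, Section 2.2) indicated in the statement.
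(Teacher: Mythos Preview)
The paper does not prove this lemma; it merely notes (in the sentence introducing the lemma) that it is a special case of what is shown in Section~2.2 of \cite{akhiezer2012lie}. Your argument is the standard one and is correct: the tangent representation $\rho\colon G_x\to \GL(T_xC)\cong\CC^\times$ is injective (via local linearization of a finite-order holomorphic map at a fixed point), so $G_x$ is cyclic with canonical generator the element acting by $e^{2\pi i/|G_x|}$, and naturality follows by differentiating the relation $h\circ g = (hgh^{-1})\circ h$.

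One small comment: your discussion of whether ``conjugation with $h$'' means the identity map $g\mapsto g$ (when $h$ centralizes $G$) or the genuine conjugation $g\mapsto hgh^{-1}$ (when $h$ merely normalizes $G$) is well taken, and it is worth noting that the paper in fact \emph{uses} the lemma in both senses. In the proof of Corollary~\ref{a5equiv} the map $\psi$ intertwines two \emph{different} $\Acal_5$-actions $\rho$ and $\rho'$ on $C$ via $\rho'_g=\psi^{-1}\rho_g\psi$, so the relevant statement is your second (normalizing) form: $\psi$ carries the canonical generator of $(G_x,\rho)$ to that of $(G_{\psi(x)},\rho')$ under $g\mapsto \psi g\psi^{-1}$. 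Your differentiation argument handles this uniformly, so there is no gap.
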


\begin{proof}[Proof of Corollary \ref{a5equiv}]
  Suppose first that $C$ and $C'$  are smooth members and that there exist a  $\Acal_5$-equivariant  isomorphism $\psi:C\rightarrow C'$. We prove that they are then equal.
  Consider the two $\Acal_5$-invariant linear systems on $C$ that define the embedding of $C$ in $\PP(U)$ and the composite of $\psi$ with the
  embedding of $C'$ in $\PP(U')$ ($U'$ is also a 3-dimensional complex vector space endowed with finite subgroup $\Is\subset \GL (U')$ that acts irreducibly in $U'$ and is isomorphic to $\Acal_5$). Proposition \ref{unilib} tells us that the two linear systems coincide. In other words, $\psi$ is induced
  by a projective-linear transformation $\PP(U)\rightarrow\PP(U')$. This  transformation is  $\mathcal{A}_5$-invariant, but since $U$ and $U'$ are irreducible as
  $\mathcal{A}_5$-representations, Schur's Lemma then implies that $U\cong U'$ and this transformation must be the identity. So $C=C'$ as algebraic curves with $\mathcal{A}_5$-action.

  If we take two singular members which are not isomorphic as algebraic curves, then clearly they are not isomorphic as algebraic curves with $\mathcal{A}_5$-action. Hence we only need to concentrate on different singular members which are isomorphic as algebraic curves.

  Let $C$ be a singular member of Winger pencil, not equal to $3K$. Two different $\Acal_5$-actions on $C$ give two different group embeddings $\rho:\Acal_5\to \Aut(C)$ and $\rho':\Acal_5\to \Aut(C)$. If the claim doesn't hold, then we can find $\psi\in \Aut(C)$ such that
  \begin{equation}\label{eqrho1}
    \rho'_g=\psi^{-1}\circ\rho_g\circ\psi
  \end{equation}
  for all $g\in\Acal_5$.

  On the other hand, by the definition, $\rho$ (resp. $\rho'$) factors through $\tilde \rho:\Acal_5\to \Aut^{C}(\PP^2)$ (resp. $\tilde{\rho}'$) and restriction to $C$, where $\Aut^{C}(\PP^2)$ consists of the automorphisms of $\PP^2$ which leave $C$ invariant. The morphisms $\tilde{\rho}$ and $\tilde{\rho}'$ satisfy the condition that $\tilde{\rho}'_g=\tilde{\rho}_{\iota^{-1}g\iota}$ for some $\iota\in\Scal_5-\Acal_5$ and all $g\in\Acal_5$. This implies that for some $\iota\in\Scal_5-\Acal_5$ and all $g\in\Acal_5$,the morphisms $\rho$ and $\rho'$ must satisfy the equation
  \begin{equation}\label{eqrho2}
    \rho'_g=\rho_{\iota^{-1}g\iota}
  \end{equation}

  Let $\{p_1,\cdots,p_{12}\}$ be the size 12 irregular orbit. The stabilizer of each $p_i$ is isomorphic to cyclic group of order five $\mu_5$. By the Lemma \ref{faithstab} this cyclic group admits a canonical generator we denote it by $g_{p_i}$. The Equation (\ref{eqrho1}) implies that $g_{\psi(p_i)}=\psi\circ g_{p_i}\circ\psi^{-1}$ equals the canonical generators $g'_{p_i}$ with respect to $\rho'$. But Equation (\ref{eqrho2}) implies that $g_{\psi(p_i)}$ and $g'_{\psi(p_i)}$ are conjugate by an element $\iota\in\Scal_5-\Acal_5$, so that they lie in different $\Acal_5$-conjugacy classes, in particular they cannot be identified through an automorphism of $C$. This is a contradiction.

  When $C$ isomorphic to $3K$, the $\Acal_5$-action on $3K$ is actually an $\Acal_5$-action on $K$. Recall that $K=\PP V_K$ embeds into $\mathds{P}^2=\PP(\Sym^2V_K)$ as the image of Veronese map where $V_K$ is a 2 dimensional irreducible $\tilde\Acal_5$-representation. Hence $\psi$ lifts naturally to a linear transformation between two different $\Acal_5$-representations. Then Schur's Lemma implies that this $\psi$ doesn't exist. This finishes the proof.
\end{proof}

\begin{thm}\label{universal}
  The family $\mathscr{W}^{\circ}\rightarrow \mathscr{B}^{\circ}$ is locally universal: it induces an universal deformation of each member, when regarded as an $\mathcal{A}_5$-curve.
\end{thm}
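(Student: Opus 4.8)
The plan is to run $\Acal_5$-equivariant deformation theory for a smooth member $C$ of the family, to prove that the resulting deformation space is a smooth germ of dimension one, and then to identify $\mathscr{W}^{\circ}\to\mathscr{B}^{\circ}$ with the universal family near each fibre by combining this dimension count with Corollary \ref{a5equiv}. Concretely: fix a smooth member $C$ of $\mathscr{W}^{\circ}$ with its faithful $\Acal_5$-action. Deformations of $C$ as an $\Acal_5$-curve are governed by the $\Acal_5$-invariant part of the tangent cohomology, so the tangent space is $H^1(C,T_C)^{\Acal_5}$ and the obstructions lie in $H^2(C,T_C)^{\Acal_5}$. Since $C$ is a curve, $H^2(C,T_C)=0$ and the problem is unobstructed; since $g(C)=10\ge 2$ we have $H^0(C,T_C)=0$, so the equivariant deformation functor is prorepresentable and admits a universal (analytic) deformation $\mathcal{C}\to (S_C,0)$ whose base $S_C$ is a smooth germ of dimension $d:=\dim_{\CC}H^1(C,T_C)^{\Acal_5}$.

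The key step is to show $d=1$. As $\pi\colon C\to C/\Acal_5$ is finite and we are in characteristic zero, $H^1(C,T_C)^{\Acal_5}\cong H^1\big(C/\Acal_5,(\pi_*T_C)^{\Acal_5}\big)$, and a local model at each branch point identifies $(\pi_*T_C)^{\Acal_5}$ with $T_{C/\Acal_5}(-D)$, where $D$ is the reduced branch divisor; equivalently this is the sheaf of infinitesimal automorphisms of the quotient orbifold. The genus of the smooth projective curve $C/\Acal_5$ equals $\dim_{\CC}H^0(C,\omega_C)^{\Acal_5}$, which vanishes by Lemma \ref{h1CC}, so $C/\Acal_5\cong\PP^1$. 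By Lemma \ref{faithstab} every point stabilizer is cyclic, hence of order $2$, $3$ or $5$, so the ramification indices lie in $\{2,3,5\}$; Riemann--Hurwitz then gives $\sum_{j}(1-\tfrac1{e_j})=\tfrac{2\cdot 10-2}{60}+2=\tfrac{23}{10}$, whose only solution with $e_j\in\{2,3,5\}$ is $(e_j)=(5,2,2,2)$. In particular $\deg D=4$, so $(\pi_*T_C)^{\Acal_5}\cong\Ocal_{\PP^1}(-2)$ and $d=h^1(\PP^1,\Ocal_{\PP^1}(-2))=1$ (consistently, $h^0(\PP^1,\Ocal_{\PP^1}(-2))=0$ recovers $H^0(C,T_C)^{\Acal_5}=0$).

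It remains to match $\mathscr{W}^{\circ}\to\mathscr{B}^{\circ}$ with the universal deformation near each member. This morphism carries a fibrewise $\Acal_5$-action --- on each of its two components the restriction of a fixed linear $\Acal_5$-action on $\PP(U)$ --- so it is a family of $\Acal_5$-curves over the smooth curve $\mathscr{B}^{\circ}$. Fix $b\in\mathscr{B}^{\circ}$ with fibre $C=C_b$. By universality of $S_C$, the germ of $\mathscr{W}^{\circ}\to\mathscr{B}^{\circ}$ at $b$ is the pullback of $\mathcal{C}\to(S_C,0)$ along a unique holomorphic map of germs $\phi\colon(\mathscr{B}^{\circ},b)\to(S_C,0)$, and the theorem is precisely the statement that $\phi$ is a biholomorphism for every $b$. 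Both germs are smooth of dimension $1$ by the previous step. If $\phi(b')=\phi(b'')$ for $b',b''$ near $b$, then $C_{b'}$ and $C_{b''}$ are isomorphic as $\Acal_5$-curves (they are fibres of the pulled-back family over the same point), hence $b'=b''$ by Corollary \ref{a5equiv}; thus $\phi$ is injective near $b$. An injective holomorphic map between one-dimensional smooth germs is a biholomorphism (in suitable local coordinates it has the form $t\mapsto t^{k}$, and injectivity forces $k=1$). Therefore $\phi$ is a biholomorphism, i.e.\ $\mathscr{W}^{\circ}\to\mathscr{B}^{\circ}$ induces the universal deformation of each of its members.

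I expect the main obstacle to be the computation $d=1$: the deformation-theoretic skeleton (vanishing of obstructions, existence and universality of the Kuranishi family) and the elementary fact about injective maps of smooth curve germs are routine, whereas pinning down the quotient orbifold --- in particular that the branch divisor has degree exactly $4$ --- uses the arithmetic above together with the structure of the $\Acal_5$-action on $\PP(U)$ recorded in Lemma \ref{lem:Korbit} and the lemma following it. A secondary point to watch is to carry out the last paragraph in the analytic (or henselian) category, or to invoke Artin approximation, so that the statement that distinct members are non-isomorphic (Corollary \ref{a5equiv}) genuinely yields injectivity of $\phi$ on a neighbourhood of $b$.
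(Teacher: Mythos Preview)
Your argument is correct and takes a genuinely different route from the paper.

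The paper works entirely through the plane embedding $C\hookrightarrow\PP(U)$. It shows $\dim H^1(C,\theta_C)^{\Is}=1$ by Serre-dualizing to $H^0(C,\omega_C^{\otimes 2})^{\Is}$ and computing the latter via the residue sequence for $C\subset\PP(U)$, which reduces the question to $\dim\CC[U]_6^{\Is}=2$ (Corollary~\ref{cor:Wingergen}). It then proves that the Kodaira--Spencer map of the pencil is an isomorphism \emph{directly}: identifying $T_{[C]}\mathscr{W}$ with $H^0(C,\nu_C)^{\Is}$ and showing the coboundary $H^0(C,\nu_C)^{\Is}\to H^1(C,\theta_C)^{\Is}$ is injective by checking $H^0(C,\Ocal_C\otimes\theta_{\PP(U)})^{\Is}=0$ through the Euler sequence and Lemma~\ref{h1CC}. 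No appeal to Corollary~\ref{a5equiv} is needed.

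You instead work intrinsically through the quotient: pushing $T_C$ down to the orbifold $C/\Acal_5\cong\PP^1$, identifying $(\pi_*T_C)^{\Acal_5}\cong T_{\PP^1}(-D)\cong\Ocal_{\PP^1}(-2)$ once the branch type $(5,2,2,2)$ is established by Riemann--Hurwitz, and reading off $d=h^1(\PP^1,\Ocal(-2))=1$. This is exactly the orbifold/Hurwitz viewpoint the paper develops later (your Riemann--Hurwitz step is Proposition~\ref{quta55222}). For the last step you replace the paper's direct Kodaira--Spencer computation by the classifying-map argument: nonconstancy and injectivity of $\phi$ follow from Corollary~\ref{a5equiv}, forcing $\phi$ to be a biholomorphism of one-dimensional smooth germs. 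This is clean, but note it makes the proof logically depend on Corollary~\ref{a5equiv} (hence on Proposition~\ref{unilib}), whereas the paper's proof of Theorem~\ref{universal} does not. Conversely, your approach avoids the normal-bundle and Euler-sequence gymnastics and foreshadows the moduli description of Section~5. Your worry about analytic versus algebraic is harmless here: the Kuranishi family lives in the analytic category, and Corollary~\ref{a5equiv} gives non-isomorphism of the underlying $\Acal_5$-curves, which is all you use.
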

\begin{proof}
  Let $C$ be a smooth member of the Winger pencil. Denote by
  $\nu_{C}$ the normal bundle of $C$ in $\PP(U)$, so that we have the exact sequence
  \[
    0\to \theta_C\to \Ocal_C\otimes \theta_{\PP(U)}\to \nu_C\to 0
  \]
  of $\Ocal_C$-modules. The proof will center around the associated long exact sequence
  \[
    0\to H^0(C,\theta_C)\to H^0(C,\Ocal_C\otimes \theta_{\PP(U)})\to H^0(C,\nu_C)\to H^1(C,\theta_C)\to\cdots ,
  \]
  or rather its $\Is$-invariant part.
  It fact, we have to interpret some of its terms geometrically.  As is well-known, $H^0(C,\nu_C)$ is the space of first order deformations of $C$ in $\PP(U)$, whereas  $H^1(C,\theta_C)$ is the  space of first order deformations of $C$ as a curve and the coboundary map $H^0(C,\nu_C)\to H^1(C,\theta_C)$ is the obvious map. Hence $\Is$-invariant part of $H^1(C,\theta_C)$ is
  the  space of first order deformations of $C$ which retain the $\Is$-action. We claim that the $\Is$-invariant part of $H^0(C,\nu_C)$ is the tangent space at $[C]$ of the family $\mathscr{W}$.

  To show this, note that we have a natural map
  \begin{equation}\label{tgh0}
    T_{[L]}\PP(\CC[U]_6)=\Hom_\CC(L, \CC[U]_6/L)\to H^0(C,\nu_C)
  \end{equation}
  where $L$ is the line of equations for $C$. By the adjunction formula, we see that $\nu_C$ has degree $36$ and then  Riemann-Roch implies that $H^0(C,\nu_C)$ is of dimension $1-10+36=27$. On the other hand  $\CC[U]_6$ is of dimension $\binom{8}{2}=28$. The map (\ref{tgh0}) is injective, hence it must be a $\Is$-equivariant isomorphism. Now $T_{[C]}\mathscr{W}$ is the $\Is$-invariant part of $T_{[L]}\PP(\CC[U]_6)$ thus isomorphic to $H^0(C,\nu_C)^{\Is}$.

  So what we must prove is that the map of invariants $H^0(C,\nu_C)^\Is\to H^1(C,\theta_C)^\Is$ is an isomorphism. We do this by first showing that this map is injective  and then proving that $H^1(C,\theta_C)^\Is$ is of dimension one (which we relegate to  Lemma \ref{lemma:dim1} below).

  To prove injectivity, {we first observe that $H^0(C,\theta_C)=0$ (for $C$ has negative Euler characteristic), hence it suffices to show that $H^0(C,\Ocal_C\otimes \theta_{\PP(U)})^\Is=0$.}
  Consider  the standard exact sequence of $\Ocal_{\PP(U)}$-modules
  \[
    0\to \Ocal_{\PP(U)}\to \Ocal_{\PP(U)}(1)\otimes_\CC U\to  \theta_{\PP(U)}\to 0.
  \]
  This remains exact after tensoring with $\Ocal_C$. The associated long exact sequence begins as
  \[
    0\to \CC\to H^0(C,\Ocal_{C}(1))\otimes_\CC U\to  H^0(C,  \Ocal_C\otimes\theta_{\PP(U)})\to  H^1(C, \Ocal_C)\to \cdots
  \]
  Its $\Is$-invariant part is still exact. We have  $H^0(C,\Ocal_{C}(1))\otimes_\CC U\cong U^\vee\otimes_\CC U=\End(U)$. Since $U$ is
  irreducible  as an $\Is$-representation, Schur's lemma implies that $\End(U)^\Is$ consists of the scalars, in other words, is the image of
  $\CC\to H^0(C,\Ocal_{C}(1))\otimes_\CC U\cong \End(U)$. We established in Lemma \ref{h1CC} that $H^1(C, \Ocal_C)^\Is=0$, and so it follows  that
  $H^0(C,\Ocal_C\otimes \theta_{\PP(U)})^\Is=0$ as desired.

  Lemma \ref{lemma:dim1} below will then finish the proof.
\end{proof}

\begin{lem}\label{lemma:dim1}
  Let $C\subset \PP(U)$ be a smooth member of the family $\mathscr{W}$. Then its space of first order deformations as an $\Is$-curve is
  of dimension one.
\end{lem}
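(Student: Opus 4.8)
The plan is to reduce the statement to ordinary invariant theory on $\PP(U)$. Since $C$ has genus $10$, its tangent sheaf $\theta_C=\omega_C^{-1}$ has negative degree; the space of first-order deformations of $C$ as an abstract curve is $H^1(C,\theta_C)$, and those first-order deformations that retain the $\Is$-action form the invariant subspace $H^1(C,\theta_C)^{\Is}$. So I must show $\dim H^1(C,\theta_C)^{\Is}=1$. The first move is to dualize: Serre duality is natural, hence $\Is$-equivariant, and yields an isomorphism $H^1(C,\theta_C)^\vee\cong H^0(C,\omega_C^{\otimes 2})$ of $\Is$-representations. Because the trivial representation is self-dual, $\dim H^1(C,\theta_C)^{\Is}=\dim H^0(C,\omega_C^{\otimes 2})^{\Is}$, so it suffices to compute the space of $\Is$-invariant quadratic differentials on $C$.

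Next I exploit that $C$ is a smooth plane sextic in $\PP(U)$. By the adjunction formula $\omega_C\cong\Ocal_C(6-3)=\Ocal_C(3)$ (this is $\Is$-equivariant, since $\Acal_5$ is simple and so carries no nontrivial characters), hence $\omega_C^{\otimes 2}\cong\Ocal_C(6)$ and $H^0(C,\omega_C^{\otimes 2})\cong H^0(C,\Ocal_C(6))$ as $\Is$-modules. Now twist the structure sequence $0\to\Ocal_{\PP(U)}(-6)\to\Ocal_{\PP(U)}\to\Ocal_C\to 0$ of the sextic (the first map being multiplication by a defining equation $F\in\CC[U]_6^{\Is}$) by $\Ocal_{\PP(U)}(6)$; since $H^1(\PP(U),\Ocal_{\PP(U)})=0$, the resulting long exact sequence shows that the restriction map $\CC[U]_6=H^0(\PP(U),\Ocal_{\PP(U)}(6))\to H^0(C,\Ocal_C(6))$ is $\Is$-equivariant, surjective, with kernel the line $\CC\cdot F$. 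As $F$ is $\Is$-invariant, $\CC\cdot F$ is a trivial subrepresentation, and because taking $\Is$-invariants is an exact functor in characteristic $0$ we obtain $\dim H^0(C,\omega_C^{\otimes 2})^{\Is}=\dim\CC[U]_6^{\Is}-1$.

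Finally, Corollary \ref{cor:Wingergen} tells us that $\CC[U]_6^{\Is}$ is two-dimensional, spanned by $\alpha^3$ and $\beta$; hence the quantity above equals $1$, which completes the proof. (Alternatively one could compute the character of $\Sym^6U^\vee$ restricted to $\Acal_5$ and read off directly that the trivial constituent has multiplicity $2$, but quoting the invariant-theoretic description of the Winger pencil is shorter.)

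There is no genuinely hard step here: the content lies entirely in bookkeeping the $\Is$-module structures — recognizing $\omega_C^{\otimes 2}\cong\Ocal_C(6)$, checking that the only relevant cohomological obstruction $H^1(\PP(U),\Ocal_{\PP(U)})$ vanishes, and observing that the defining equation contributes exactly one copy of the trivial representation, so that exactness of the invariants functor produces the ``$-1$''. The two points that deserve a word of care are the naturality (hence $\Is$-equivariance) of Serre duality and the exactness of $W\mapsto W^{\Is}$; both are standard. The one non-formal input is imported from earlier in the paper, namely the equality $\dim\CC[U]_6^{\Is}=2$, which ultimately rests on Molien's formula.
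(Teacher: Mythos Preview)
Your proof is correct and follows essentially the same strategy as the paper: both reduce via Serre duality to computing $\dim H^0(C,\omega_C^{\otimes 2})^{\Is}$, both arrive at the short exact sequence $0\to\Ocal_{\PP(U)}\to\Ocal_{\PP(U)}(6)\to\omega_C^{\otimes 2}\to 0$, and both finish by quoting $\dim\CC[U]_6^{\Is}=2$ from Corollary~\ref{cor:Wingergen}. The only cosmetic difference is that the paper derives this sequence from the residue sequence (writing the middle and left terms as $\omega_{\PP(U)}^{\otimes 2}(2C)$ and $\omega_{\PP(U)}^{\otimes 2}(C)$), whereas you first invoke adjunction $\omega_C\cong\Ocal_C(3)$ and then twist the ideal-sheaf sequence---a packaging that requires your extra remark about $\Acal_5$ having no nontrivial characters, which the residue approach sidesteps.
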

\begin{proof}
  Recall that the first order deformations of $C$ are parametrized by $H^1(C,\theta_C)$, where $\theta_C$ is the sheaf of tangent fields on $C$. Those that are  $\Is$-invariant are parametrized  by the $\Is$-fixed part of $H^1(C,\theta_{C})$. Serre duality identifies the dual of $H^1(C,\theta_C)$ with
  $H^0(C,\omega_{C}^{\otimes 2})$, even as a $\Is$-representation. It therefore suffices to show that the $\Is$-invariant part of $H^0(C,\omega_{C}^{\otimes 2})$ is of dimension one.
  Since $C$ is smooth, the  residue  sequence for the closed immersion $C\subset  \PP(U)$,
  \[
    \xymatrix{0\ar[r]& \omega_{\PP(U)}\ar[r]&\omega_{\PP(U)}(C)\ar[r]&\omega_{C}\ar[r]&0},
  \]
  is an exact sequence of  $\Ocal_{\PP(U)}$-modules. The residue homomorphism $\omega_{\PP(U)}(C)\to \omega_{C}$ induces
  a  homomorphism $\omega_{\PP(U)}^{\otimes 2}(2C)\to \omega^{\otimes 2}_{C}$ which then fits in the exact sequence of $\Ocal_{\PP(U)}$-modules
  \begin{equation}
    \xymatrix{0\ar[r]& \omega_{\PP(U)}^{\otimes2}(C)\ar[r]&\omega^{\otimes2}_{\PP(U)}(2C)\ar[r]&\omega_{C}^{\otimes 2}\ar[r]&0}.
  \end{equation}
  This gives the long exact sequence:
  \begin{multline*}
    0\to H^0(\PP(U),\omega_{\PP(U)}^{\otimes2}(C))\to H^0(\PP(U),\omega^{\otimes2}_{\PP(U)}(2C))\to \\
    \to H^0(C,\omega_{C}^{\otimes 2})\to H^1(\PP(U),\omega_{\PP(U)}^{\otimes2}(C))\to\cdots
  \end{multline*}
  Note that
  \begin{equation*}
    \omega_{\PP(U)}^{\otimes2}(C)\cong \Ocal_{\PP(U)}(-3)^{\otimes2}\otimes\Ocal_{\PP(U)}(6)\cong \Ocal_{\PP(U)}
  \end{equation*}
  and that
  \begin{equation*}
    \omega^{\otimes2}_{\PP(U)}(2C)\cong \Ocal_{\PP(U)}(6).
  \end{equation*}
  It follows that  $H^0(\PP(U),\omega_{\PP(U)}^{\otimes2}(C))$ is of dimension one (and hence trivial as a $\Is$-representation), $H^1(\PP(U),\omega^{\otimes 2}_{\PP(U)}(C))=0$ and
  $H^0(\PP(U),\omega^{\otimes2}_{\PP(U)}(2C))\cong\CC[U]_6$ as a $\Is$-representation. So it remains to see that $\CC[U]_6^\Is$ is of dimension 2.
  But this  we noted in Corollary \ref{cor:Wingergen}. \end{proof}

\section{Modification of Winger Pencil}
As we observed before, every member in the Winger Pencil is a stable curve except $3K$. In this section, we modify the Winger pencil with $3K$ replaced by a nonsingular algebraic curve which is a ramified Galois cover of $K$ with Galois group the group of third roots of unity $\FFF$.

\begin{thm}\label{tricov}
  There exists a ramified $\FFF$-cover of $K$  whose total space $C_K$ is a non-singular irreducible curve of genus 10 to which the $\Is$-action on $K$ lifts in a unique way. Such a  covering is unique up to isomorphism and ramifies at the size 12 irregular $\Is$-orbit and nowhere else.
\end{thm}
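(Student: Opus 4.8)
The plan is to construct the cover from a suitable line bundle on $K\cong\PP^1$ together with a trivialization of its third power coming from the divisor of the size-$12$ orbit. First I would set $P_{12}\subset K$ to be the $\Is$-orbit of size $12$ and look for a line bundle $\Ocal_K(D)$ with $3D$ linearly equivalent to $P_{12}$; since $\deg P_{12}=12$ is divisible by $3$ and $\Pic(\PP^1)=\ZZ$, the only candidate is $D$ of degree $4$, i.e. $\Ocal_K(4)$, and there is a unique such $D$ up to linear equivalence. A choice of isomorphism $\Ocal_K(4)^{\otimes 3}\xrightarrow{\ \sim\ }\Ocal_K(P_{12})$ then produces, by the standard cyclic-cover construction, a curve $C_K:=\mathbf{Spec}_K\bigl(\bigoplus_{j=0}^{2}\Ocal_K(-4j)\bigr)$ with a $\FFF$-action whose quotient is $K$, ramified exactly over $P_{12}$ with full ramification (a $\mu_3$ stabilizer) at each of the $12$ points. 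The genus is then forced: Riemann–Hurwitz gives $2g(C_K)-2=3(2\cdot 0-2)+12\cdot(3-1)=18$, so $g(C_K)=10$, and $C_K$ is irreducible because the cover is totally ramified somewhere (the class of $\Ocal_K(4)$ has order $3$ in $\Pic(K)/3\Pic(K)$, equivalently the monodromy representation $\pi_1(K\setminus P_{12})\to\FFF$ is surjective).

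Next I would address uniqueness of the covering. Two ramified $\FFF$-covers of $K$ branched precisely over $P_{12}$ with full ramification correspond to surjections $\pi_1(K\setminus P_{12})\to\FFF$ modulo automorphisms of $\FFF$; concretely, to a class in $H^1(K\setminus P_{12};\FFF)$ that is "totally ramified," i.e. restricts nontrivially to each small loop around a puncture. The group $H^1(K\setminus P_{12};\FFF)$ is $(\ZZ/3)^{11}$, but the total-ramification and triviality-at-infinity conditions (the loops sum to zero in homology) cut this down; in the line-bundle picture, the data is exactly a cube root $D$ of $[P_{12}]$ in $\Pic(K)$ together with a choice of trivializing isomorphism, and the isomorphism is unique up to the global units $\CC^\times$ acting through cube roots of unity, which is absorbed into the $\FFF$-action. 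Hence the covering is unique up to isomorphism.

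The main point — and the step I expect to take the most care — is lifting the $\Is$-action on $K$ to $C_K$ and showing this lift is \emph{unique}. Since $P_{12}$ is $\Is$-invariant, the divisor class $[P_{12}]\in\Pic(K)$ is $\Is$-invariant, and as $\Pic(K)=\ZZ$ with a unique degree-$4$ class, the class $[D]$ is $\Is$-invariant as well; thus $\Is$ permutes the finitely many cyclic covers of this type, and by the uniqueness just established it must fix the isomorphism class of $C_K$. To upgrade this to an actual lift one must linearize: $\Ocal_K(D)=\Ocal_K(4)$ must be made into an $\Is$-equivariant (more precisely, $\tilde\Is$-equivariant) line bundle, which is possible because $\Ocal_K(1)$ is $\tilde\Is$-linearizable via the representation $V_K$ and so $\Ocal_K(4)\cong\Sym^4 V_K^\vee$ (or a twist thereof) carries a natural $\tilde\Is$-action; one then checks the distinguished section cutting out $P_{12}$ spans an $\Is$-stable line, and since $\Is$ has no nontrivial $1$-dimensional characters the action on that line is trivial, so the section is genuinely $\Is$-invariant and the cyclic-cover construction is $\Is$-equivariant. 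For uniqueness of the lift: two lifts differ by an automorphism of the cover $C_K\to K$ over $\Is$, i.e. by a homomorphism $\Is\to\FFF$ (the deck group being central), and since $\Is\cong\Acal_5$ is perfect the only such homomorphism is trivial; hence the lift is unique. Finally, "ramifies at the size $12$ orbit and nowhere else" is immediate from the cyclic-cover construction since the branch divisor is exactly $P_{12}$, and the fact that no other $\Is$-orbit can be a branch locus follows because $12$ is the only orbit size among $\{12,20,30\}$ forcing the right genus — but we do not even need this, as the branch locus is determined at the outset.
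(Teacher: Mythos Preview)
Your line-bundle approach to \emph{constructing} the cover and lifting the $\Is$-action is sound and genuinely different from the paper's route. The paper builds the cover by specifying a homomorphism $H_1(K\setminus P_{12})\to\mu_3$ sending each of the $12$ standard loops to the same generator, then shows the $\Is$-action lifts by proving that every extension of $\Is$ by $\mu_3$ splits uniquely (via $H^1(\Is,\mu_3)=H^2(\Is,\mu_3)=0$, obtained through a Lyndon--Hochschild--Serre reduction to $\tilde\Is$ and a Serre spectral sequence for $\SU_2/\tilde\Is\to B\tilde\Is\to B\SU_2$). Your argument replaces the cohomological machinery with an equivariant linearization of $\Ocal_K(4)$ plus the observation that $\Is$, being perfect, admits no nontrivial homomorphism to $\mu_3$; this is more elementary and delivers the lift without computing any group cohomology.

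However, your uniqueness argument for the covering has a real gap. You assert that a $\mu_3$-cover of $K$ fully branched over $P_{12}$ is determined by a cube root of $[P_{12}]$ in $\Pic(K)$ together with a trivialization, and that since $\Pic(\PP^1)=\ZZ$ this cube root is unique. But that recipe only produces the covers whose local monodromy at every branch point is the \emph{same} generator of $\mu_3$. A general totally ramified $\mu_3$-cover over $P_{12}$ has local monodromy $\zeta^{a_i}$ with $a_i\in\{1,2\}$ at the $i$th point, subject only to $\sum a_i\equiv 0\pmod 3$; in line-bundle language the relevant cube root is of $\sum a_i p_i$, not of $P_{12}$. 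There are many such covers (indeed $H^1(K\setminus P_{12};\mu_3)\cong(\ZZ/3)^{11}$, and the totally-ramified locus is large), so the cover is \emph{not} unique without the $\Is$-equivariance hypothesis. The paper uses exactly this: since $\Is$ acts transitively on the $12$ loops, an $\Is$-invariant homomorphism $H_1(K\setminus P_{12})\to\mu_3$ must send every loop to the same nonzero element, forcing $\phi'=\pm\phi$. You need to insert this step---or its line-bundle analogue, that the only $\Is$-invariant divisors supported on $P_{12}$ with coefficients in $\{1,2\}$ are $P_{12}$ and $2P_{12}$---to close the gap.
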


To prove this theorem, let us begin with some lemmas about group cohomology. In what follows we regard $\FFF$ as a trivial $\Is$-module (so that it is also trivial as a $\tilde\Is$-module).

\begin{lem}\label{cohA5Z}
  Both  $H^1(\tilde\Is,\FFF)$ and $H^2(\tilde\Is,\FFF)$ are trivial.
\end{lem}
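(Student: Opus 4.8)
The plan is to compute the low-degree group cohomology of the binary icosahedral group $\tilde\Is$ with trivial coefficients in $\FFF=\mu_3\cong\ZZ/3$. The first step is to recall that $\tilde\Is$ is a finite group of order $120$; in particular $3$ divides $|\tilde\Is|$, so these cohomology groups are $3$-torsion but need not vanish for size reasons alone, and a genuine argument is needed. I would use the standard interpretations: $H^1(\tilde\Is,\FFF)=\Hom(\tilde\Is,\FFF)$ since the action is trivial, and this is $\Hom(\tilde\Is^{\mathrm{ab}},\ZZ/3)$. Because $\tilde\Is=\SL_2(\FFFFF)\cong 2.\Acal_5$ is a perfect group (its abelianization is trivial, as $\Acal_5$ is simple non-abelian and the extension by $\mu_2$ is the universal central extension), we get $H^1(\tilde\Is,\FFF)=0$ immediately.

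For $H^2(\tilde\Is,\FFF)$ I would argue via the Schur multiplier. By the universal coefficient theorem for group cohomology,
\[
H^2(\tilde\Is,\FFF)\cong \Hom(H_2(\tilde\Is,\ZZ),\ZZ/3)\oplus \Ext^1_{\ZZ}(H_1(\tilde\Is,\ZZ),\ZZ/3),
\]
and the second summand vanishes since $H_1(\tilde\Is,\ZZ)=\tilde\Is^{\mathrm{ab}}=0$. So it remains to see that the Schur multiplier $H_2(\tilde\Is,\ZZ)$ has no $3$-torsion. Since $\tilde\Is$ is itself the universal central extension of the perfect group $\Acal_5$ (the Schur multiplier of $\Acal_5$ being $\ZZ/2$), $\tilde\Is$ is superperfect, i.e. $H_1(\tilde\Is,\ZZ)=H_2(\tilde\Is,\ZZ)=0$. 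Hence $H^2(\tilde\Is,\FFF)=0$ as well. Alternatively, one can avoid citing superperfectness by a Sylow argument: restriction $H^i(\tilde\Is,\FFF)\to H^i(P,\FFF)$ to a Sylow $3$-subgroup $P$ is injective (the composition with corestriction is multiplication by the index, which is prime to $3$), and $P\cong\ZZ/3$, so one only needs to check that the image lands in the part killed by the $\tilde\Is$-action on $H^*(P,\FFF)$ — equivalently that the normalizer $N_{\tilde\Is}(P)$ acts on $P\cong\ZZ/3$ by inversion (an element of order $2$ in the normalizer inverts $P$), which forces the stable classes in $H^1$ and $H^2$ to vanish.

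The main obstacle is really just bookkeeping: pinning down that $\tilde\Is$ is superperfect (or, in the Sylow approach, identifying $N_{\tilde\Is}(P)$ and its action on $P$). Both are classical facts about $\SL_2(\FFFFF)$, and I would cite a standard reference for the Schur multiplier of $\Acal_5$ and for the vanishing of $H_1$ and $H_2$ of its universal central extension; after that the universal coefficient sequence and the perfectness of $\tilde\Is$ give both vanishing statements directly.
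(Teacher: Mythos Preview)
Your argument is correct, but it takes a genuinely different route from the paper's. The paper proceeds topologically: it realizes $B\tilde\Is$ via the fibration $\SU_2/\tilde\Is\to B\tilde\Is\to B\SU_2$, observes that the fiber is the Poincar\'e homology $3$-sphere (so that perfectness of $\tilde\Is$ together with Poincar\'e duality gives $H^q(\SU_2/\tilde\Is,\FFF)=0$ for $q=1,2$), and then reads off the vanishing of $H^1$ and $H^2$ from the Serre spectral sequence, using that $H^*(B\SU_2;\ZZ)$ is concentrated in degrees divisible by $4$. Your approach is purely algebraic: you invoke the universal coefficient theorem and the fact that $\tilde\Is$ is superperfect (being the universal central extension of $\Acal_5$), which kills both $\Hom(H_2,\ZZ/3)$ and $\Ext^1(H_1,\ZZ/3)$ at once. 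What you gain is directness---no spectral sequence is needed---at the cost of citing the Schur multiplier of $\Acal_5$; the paper's approach is more self-contained in that it effectively \emph{derives} $H_2(\tilde\Is,\ZZ)=0$ from the $3$-manifold topology rather than quoting it. Your alternative Sylow argument also works (inversion in $N_{\tilde\Is}(P)$ acts as $-1$ on both $H^1(P,\FFF)$ and $H^2(P,\FFF)$, the latter because the generator in degree $2$ is the Bockstein of the degree-$1$ generator), and for abelian Sylow this identifies the image of restriction with the $N_{\tilde\Is}(P)$-invariants; you might want to make that last point explicit if you keep this variant.
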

\begin{proof}
  Consider the quaternions  $\HH$. The unit quaternions $\HH_1$ form a Lie group isomorphic to $\SU_2$ with a $3$-sphere as underlying manifold.
  So the universal principal $\SU_2$-bundle $E\SU_2\to B\SU_2$ is realized by the unit $3$-sphere bundle in the tautological bundle over $\PP(\HH^\infty)=\cup_n \PP(\HH^n)$.
  If we pass to the orbit space of  $\tilde\Is\hookrightarrow \SU_2$, we obtain a fibration $B\tilde\Is\to B\SU_2$ with fiber  $\SU_2/\tilde\Is$.
  It is clear that $\tilde{\Is}$ acts on $S^3\cong \SU_2$ freely, hence $S^3\to \SU_2/\tilde\Is$ is an universal cover. Its fundamental group $\pi_1(\SU_2/\tilde\Is)$ is isomorphic to $\tilde{\Is}$.
  The first homology group $H_1(\SU_2/\tilde\Is,\ZZ)$ is the abelianization of $\tilde\Is$ and since $ \tilde\Is$  is perfect, this is zero. The second homology group $H_2(\SU_2/\tilde\Is),\ZZ)=H^1(\SU_2/\tilde\Is,\ZZ)$ by Poincar\'e duality and the last group is certainly zero (because $\tilde\Is$ is finite). Therefore by the universal coefficient theorem, the $\FFF$-coefficient cohomology of $\SU_2/\tilde\Is$ is as following:
  \[
    H^q(\SU_2/\tilde\Is,\FFF)\cong
    \begin{cases}
      \FFF, & \text{when $q=0, 3$ and} \\
      0     & \text{otherwise.}
    \end{cases}
  \]

  The cohomology ring $H^*( B\SU_2; \mathds{Z})\cong H^*(\PP(\HH^\infty);\mathds{Z})$ is  known to be a polynomial ring $ \mathds{Z}[v]$ with its generator in  degree 4 (See Page 222 of \cite{hatcher2002algebraic}).

  So the  Serre spectral sequence for fibrations
  \begin{equation*}
    E_2^{p,q}=H^p(B\SU_2,H^q(\SU_2/\tilde\Is,\FFF))\Rightarrow H^{p+q}(B\tilde\Is,\FFF)=H^{p+q}(\tilde\Is,\FFF),
  \end{equation*}
  {has the property that
  \[
    E_2^{p,q}\cong
    \begin{cases}
      \FFF, & \text{when $p=0$ or $p=3$ and $q=4k$ where $k\in \ZZ_{\ge 0}$ and} \\
      0     & \text{otherwise.}
    \end{cases}
  \]}
  We see that  $E_{2}^{p, q}$ is trivial when $0<p+q<3$. It follows that $H^{1}(\tilde\Is,\FFF)=H^{2}(\tilde\Is,\FFF)=0$.
\end{proof}

\begin{lem}\label{lemma:isplit}
  Both  $H^1(\Is,\FFF)$ and $H^2(\Is,\FFF)$ are trivial and (hence)  any group extension of $\Is$ by $\FFF$ splits
  and the splitting is unique.
\end{lem}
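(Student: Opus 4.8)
The plan is to deduce the vanishing of $H^1(\Is,\FFF)$ and $H^2(\Is,\FFF)$ from the corresponding statement for $\tilde\Is$ proved in Lemma \ref{cohA5Z}, by exploiting the central extension $1\to\mu_2\to\tilde\Is\to\Is\to 1$ together with the fact that $|\mu_2|=2$ is coprime to $|\FFF|=3$. First I would record the general principle: if $N\trianglelefteq G$ is a normal subgroup whose order is invertible in the coefficient module $M$ (here $M=\FFF=\mu_3$, so we need $|N|$ invertible mod $3$), then the inflation map $H^i(G/N,M^N)\to H^i(G,M)$ is an isomorphism for all $i$. This is the standard consequence of the Lyndon–Hochschild–Serre spectral sequence $E_2^{p,q}=H^p(G/N,H^q(N,M))\Rightarrow H^{p+q}(G,M)$: since multiplication by $|N|$ kills $H^q(N,M)$ for $q>0$ while it is invertible on $M$, all rows $q>0$ vanish, the spectral sequence collapses, and the edge homomorphism gives the claimed isomorphism. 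Applying this with $G=\tilde\Is$, $N=\mu_2$ (which is central, hence normal, and has order $2$ invertible mod $3$), and $M=\FFF$ with trivial action — so that $M^N=M=\FFF$ with the induced trivial $\Is$-action — yields $H^i(\Is,\FFF)\cong H^i(\tilde\Is,\FFF)$ for all $i$. Combined with Lemma \ref{cohA5Z} this gives $H^1(\Is,\FFF)=H^2(\Is,\FFF)=0$.

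For the second assertion, I would invoke the standard classification of group extensions: the group extensions of $\Is$ by the abelian group $\FFF$ inducing a fixed (here trivial) action of $\Is$ on $\FFF$ are classified, up to equivalence, by $H^2(\Is,\FFF)$, with the split extension corresponding to the zero class; moreover the set of splittings of a fixed split extension, up to conjugation by $\FFF$, is a torsor under $H^1(\Is,\FFF)$. Since both groups vanish, every extension of $\Is$ by $\FFF$ is equivalent to the (semi)direct — in fact, because the action is trivial, direct — product $\Is\times\FFF$, and the splitting is unique up to conjugation; but here even more is true, since an extension with trivial action that splits has all its splittings differing by a homomorphism $\Is\to\FFF=H^1(\Is,\FFF)=0$, so the splitting is literally unique, not merely unique up to conjugacy. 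Alternatively, one notes $\FFF$ has no nontrivial automorphisms fixing it pointwise in a way relevant here, so "unique up to conjugation" already forces genuine uniqueness.

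The only point requiring care — and the main (mild) obstacle — is the vanishing-by-coprimality input: one must make sure the coefficient module is $\FFF\cong\mathds{Z}/3$ (or a $\mathds{Z}[1/2]$-module) so that $|\mu_2|=2$ acts invertibly, and that $\mu_2$ acts trivially on $\FFF$ so that $H^0(\mu_2,\FFF)=\FFF$ with the correct residual $\Is$-action; both are guaranteed by our standing convention that $\FFF$ is a trivial $\tilde\Is$-module. With these observations the argument is a short invocation of the Lyndon–Hochschild–Serre spectral sequence and the cohomological classification of extensions, and no computation beyond what is in Lemma \ref{cohA5Z} is needed.
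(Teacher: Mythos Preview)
Your proof is correct and follows essentially the same approach as the paper: both run the Lyndon--Hochschild--Serre spectral sequence for $1\to\mu_2\to\tilde\Is\to\Is\to 1$, observe that $H^q(\mu_2,\FFF)=0$ for $q>0$ (you via the coprimality/transfer argument, the paper via the explicit cohomology of $B\mu_2=\mathbb{RP}^\infty$), deduce the collapse $H^p(\Is,\FFF)\cong H^p(\tilde\Is,\FFF)$, and apply Lemma~\ref{cohA5Z}. Your added discussion of why the vanishing of $H^1$ and $H^2$ yields the splitting statement is something the paper leaves implicit.
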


\begin{proof}
  Since $\tilde\Is$ is an extension  of $\Is$ by $\mu_2$, we have
  the  Lyndon-Hochschild-Serre spectral sequence:
  \[
    E^{p,q}_2=H^p(\Is, H^q(\mu_2, \FFF))\Rightarrow H^{p+q}(\tilde\Is, \FFF).
  \]
  The  classifying space of $\mu_2$ is the infinite  real projective space whose integral cohomology is $\ZZ$ in degree zero and cyclic of order $2$ in positive  degrees. It follows that  $H^q(\mu_2, \FFF)$ is nonzero only when $q=0$; it is then a copy of $\FFF$. So the above sequence degenerates on its second page, so that we have an isomorphism $H^p(\Is, \FFF))\cong H^p(\tilde\Is, \FFF)$. Now apply Lemma \ref{cohA5Z}.
\end{proof}

We can now  prove  Theorem \ref{tricov}.

\begin{proof}[Proof of Theorem \ref{tricov}]
  We first prove  the existence of such a cover. We know that the faithful $\Is$-action on $K$ has an irregular orbit $Q$ of size 12.
  If $q_0$ is a base point of $K-Q$, then
  a connected $\FFF$-cover of $K-Q$ is given by a surjective group homomorphism $\phi:\pi_1(K-Q,q_0)\rightarrow \FFF$.  Such a group
  homomorphism factors through $H_1(K-Q)$ and hence is also given by a nonzero  homomorphism $\phi: H_1(K-Q)\to  \FFF$.
  The group $H_1(K-Q)$ is generated by the  simple positive loops around the points of $Q$. These 12 generators are subject to the relation
  that their sum is zero and this defines a presentation of $H_1(K-Q)$. We  then take as our cover the one defined by assigning to every generator
  the  element $1\in\FFF$ (since $12=0$ in $\FFF$, this is well-defined). By the Riemann extension theorem
  this extends to a nonsingular $\FFF$-cover  $p:C_K\rightarrow K$. The construction is canonical in the sense that
  it is only in terms of the pair $(K, Q)$: any automorphism of $K$ which preserves $Q$ will preserve the element of
  $\Hom (H_1(K-Q); \FFF)$ defined above and will therefore lift over $p$ to an automorphism of $C_K$.
  This applies in particular to elements of $\Is$. Denote by $\Aut_p(C_K)$ the group of automorphisms $f$ of $C_K$ such that there exist a  $\bar f\in \Is$ which makes the following diagram commute:
  \begin{equation*}
    \xymatrix{C_K\ar[r]^{f}\ar[d]^p&C_K\ar[d]^p\\
      K\ar[r]^{\bar f}&K
    }
  \end{equation*}
  It is easy to see that for a given $f$ the associated $\bar f$ must be unique. From what we showed above, it follows that we have a surjective group homomorphism $p_{\ast}:\Aut_p(C_K)\rightarrow \Is$. Its kernel consists of the covering transformations of $p$, so  that we have an exact sequence
  \begin{equation*}
    \xymatrix{0\ar[r]&\FFF\ar[r]&\Aut_p(C_K)  \ar[r]^{\quad\quad p_*} &\Is \ar[r]&1}.
  \end{equation*}
  This is a central  extension of $\Is$ by $\FFF$. By Lemma \ref{lemma:isplit}, such an extension is  split in a unique manner
  so that we have in fact a unique way of letting $\Is$ act on $C_K$ such that $p$ is equivariant.

  We next show that $C_K\to K$ is unique up to equivariant isomorphism.
  Suppose we have another smooth connected $\FFF$-cover $p':C'_K\rightarrow K$ with $\Is$-action. Every ramification  point  of $p'$ will be a point
  of total ramification, for $\FFF$ has no proper nontrivial subgroups. The set of ramification points is $\Is$-invariant, so is a  union of $\Is$-orbits.
  Since $C_K$ is smooth and connected,  it is irreducible. The Riemann Hurwitz formula
  \begin{equation*}
    \sum_{P\in C_K}(e_P-1)=24
  \end{equation*}
  then shows that the only possibility is that we have 12 points of total  ramification (the ramification index being 3). Since all the $\Is $-orbits in $K$ distinct from $Q$ have size
  $>12$, the branch locus must be $Q$.  The restriction  of $p'$ to $K-Q$ defines and is  defined by a homomorphism $\phi': H_1(K-Q)\to \FFF$. Since the covering is connected this homomorphism is onto and since the covering comes with an $\Is$-action, it is also $\Is$-invariant. In particular, will take on the $12$ generators the same value and this value must be nonzero. This just means that $\phi'=\pm \phi$. The two coverings are then isomorphic, as they are
  identified  by the identity of $\FFF$ or  the inversion automorphism $a\in\FFF\mapsto -a\in\FFF$.
  This proves the uniqueness.
\end{proof}

Recall that the Winger  pencil defines a hypersurface $\Cs\subset \PP(U)\times \PP^1$ by the Equation (\ref{defequ}) with the fiber $\Cs_o$ over $o=[0:1]\in\PP^1$ is $3K$. It gives a deformation of $3K$ in $\PP(U)$. Next we will show that this family is closely related to the local deformation property of $C_K$, namely by taking proper base change and blow up along unexpected members in $\mathscr{C}$ we get the universal deformation of $C_K$.

Let us begin with describing a basic example of a birational transformation over a disk. Let $\Delta$ be the complex unit disk (with origin $o$),  $\Ps\to \Delta$ a bundle of projective spaces, and $H$ a hyperplane in the fiber $P_o$. The normal bundle of $H$ in  $\Ps$ is $\Ocal_{H}(1)\oplus \Ocal_{H}$ (we get this decomposition after we have chosen a trivialization of $\Ps/\Delta$) and so its blowup
$\Bl_{H}(\Ps)\to \Ps$ has exceptional divisor $\PP(\Ocal_{H}(1)\oplus \Ocal_{H})$.
The summand  $\Ocal_{H}(1)$  resp.\  $\Ocal_{H}$ determines a section
$\sigma_\infty$ resp.\ $\sigma_0$ of  this $\PP^1$ bundle with normal bundle $\Ocal_H(-1)$  resp.\ $\Ocal_H(1)$.
We can contract $\sigma_\infty$ in this exceptional divisor to get $\PP(\Ocal_{H}(1)\oplus \Ocal_{H})\to \hat P_o$,  with
$\hat P_o$ a copy of a projective space, which  has the image of $\sigma_0(H)$ as a hyperplane. On the other hand, the strict transform of $P_o$ maps isomorphically to $P_o$,
but its normal bundle is now $\Ocal_{P_o}(-1)$. This implies that it is the exceptional divisor of blow-up of a smooth variety, so that we have a morphism  $\Bl_{H}(\Ps)\to \hat \Ps$  with $\hat \Ps$ smooth. We refer to the image of the contracted copy of $P_o$ as the \emph{vertex} and denote it by $v$.
The projection onto $\Delta$ subsists and yields a morphism $\hat \Ps\to \Delta$. This is another bundle of projective spaces (whose fiber over $o$ is the $\hat P_o$ above) and  we get a bimeromorphic map over $\Delta$:
\[
  f: \Ps/\Delta\leftarrow \Bl_{H}(\Ps)/\Delta\rightarrow \hat\Ps/\Delta.
\]
It is  clear that $\hat\Ps/\Delta$ coincides with $\Ps/\Delta$  over $\Delta-\{o\}$,  but that over $o$, the projective space $P_o$ is replaced by
the projective space $\hat P_o$. Also note that although $H$ has been contracted, it reappears as a hyperplane in $\hat P_o$.

These assertions can easily be verified by describing $f$ in coordinates: if  we are given a local trivialization of $\Ps$ at $o$ with coordinates $([T_0:\cdots : T_n],\lambda)$
such that  $H$ is defined by $T_0=0$ ($\lambda$ is the coordinate for $\Delta$), then we have a similar local trivialization
$([S_0:\cdots : S_n],\lambda)$ of $\hat \Ps$ at $o$ such that $f: \Ps\dashrightarrow\hat \Ps$ is given by
\[
  ([T_0: T_1:\cdots : T_n], \lambda)\mapsto ([T_0: \lambda T_1:\cdots : \lambda T_n], \lambda)=([T_0/\lambda: T_1:\cdots : T_n], \lambda).
\]
Note that the $S$-coordinates  vertex $v$ are $[1:0:\cdots :0]$.

Suppose that we are given an effective relative divisor $\Ds$ on  $\Ps/\Delta $ of degree $d$ such that $D_o=d H$. So in the above  coordinates, $\Ds$ is given up to first order by
$T_0^d+\lambda G(T_0, \dots , T_n)\pmod{\lambda^2}$ for some $G\in \CC[T_0, \dots, T_n]_d$.
If we make the base change over $\tau: \tilde \Delta\to \Delta$
given by $\tilde\lambda^d=\lambda$, then the relative divisor  $\tau^*\Ds$ on $\tau^*\Ps$ is given by $T_0^d+\tilde \lambda^d G(T_0, \dots , T_n)\pmod{\tilde\lambda^{2d}}$
and $\tau^*f: \tau^*\Ps\to \tau^*\hat\Ps$ takes $\tau^*\Ds$ to a relative divisor on $\tau^*\hat\Ps$ which is given by
\[
  S_0^d+ G(\tilde\lambda S_0, \dots , S_n)\equiv
  S_0^d+ G(0, S_1,\cdots ,S_n) \pmod{\tilde\lambda^d}
\]
(we substituted $T_0=\tilde\lambda S_0$, $T_i=S_i$ for $i=1, \dots, n$, and we divided by $\tilde\lambda^d$). We shall denote that divisor
simply by  $\hat\Ds$ (although something like $\hat\Ds(\phi)$ would be more appropriate). In particular, the fiber over $o$ is the hypersurface $\hat D_o$ defined by $S_0^d+ G(0, S_1,\cdots ,S_n)$. 
Note that it does not pass through the vertex $v=[1:0:\cdots :0]$. We put $G_o(S_1,\cdots ,S_n)=G(0, S_1,\cdots ,S_n)$. Observe that if $G_o$ is not identically zero, then its  zero set $G_o$ defines a hypersurface $Z\subset H$  and projection away from the vertex realizes $\hat D_o$ as a cyclic cover (with Galois group $\mu_d$) of the hyperplane $H$, which totally ramifies along $Z$. This $\mu_d$-action is the restriction of the natural $\mu_d$-action on $\tau^*\hat\Ps$ that is given by letting $\zeta\in\mu_d$ act as
\[
  ([S_0:S_1\cdots :S_n], \tilde\lambda)\mapsto ([\zeta S_0:S_1\cdots :S_n], \zeta\tilde\lambda).
\]
This action clearly preserves $\hat\Ds$.
\medskip

We apply this to the situation where we take for $\Ps$ the trivial bundle $\PP(\Sym^2U)\times\Delta\to \Delta$ and for $H$ the hyperplane
  {$H_K:=\PP(\Sym^4 V_K)$} in $\PP(\Sym^2U)\times\{ o\}$. On all these data we have $\Is$ acting, and indeed, on everything we do here this action
subsists. Let $j: \Xs:=\PP(U)\times \Delta\hookrightarrow \PP(\Sym^2U)\times \Delta$ be given by the Veronese embedding on the first factor and let
$\hat j: \hat \Xs \hookrightarrow \hat \Ps$ be its (strict) transform under the bimeromorphic map above. Since $\Xs$ meets $H_K\times\{ o\}$ transversally with preimage
$K\times \{ o\}$, we see that $\hat \Xs$ is obtained from $\Xs$ by blowing up $H_K\times\{ o\}$ followed by contraction of the strict transform
of $\PP(U)\times \{ o\}$. The effect of the bimeromorphic map $f_\Xs: \Xs\dashrightarrow\hat \Xs$ is that it replaces the projective plane  $X_o=\PP(U)$ by  the cone $\hat X_o$ over the Veronese image of $K$ in $H_K$.

We now think of the  Winger pencil $\Cs|\Delta$ as a divisor on $\Xs$ of relative degree $6$. It is the preimage under a divisor $\Ds$ on $\Ps$ of relative degree $3$ that has an equation of the form $T_0^3+\lambda G(T_0, \dots, T_5)$. Here $G$ is a cubic form on $\Sym^2U$  with
the property that precomposition with the squaring map $U\to \Sym^2U$ is the product $F$ of $6$ linear forms that define $C_\infty$ in $\PP(U)$.
So if we make the above base change for $d=3$, then $\hat \Ds$  is given by a divisor of relative degree $3$ whose fiber over $o$ is defined by $S_0^3+ G_o(S_1,\cdots ,S_5)$. This has the structure of a $\mu_3$-cover of $H_K$ ramified along the divisor defined by
$G_o$. The zero locus $Z$ of $G_o$ in $H_K$ meets $K$ (or rather its image under the  Veronese map) in the $\Is$-orbit of size $12$ (as a reduced divisor). Hence the strict transform of $\tau^*\Cs|\tilde\Delta$ under $f$ (which we denote simply by $\hat\Cs$) will has its fiber over $o$ the $\mu_3$-cover of $K$ which totally ramifies in this orbit. But this is just our
$C_K$ and the $\mu_3$-action is the restriction of a $\mu_3$-action on $\hat\Cs/\tilde\Delta$ which on the base is the obvious one.
It is clear that  $\Is$ acts fiberwise and commutes with this $\mu_3$-action. This modification of the degree $3$ base change of $\Cs|\Delta$
is in fact a universal deformation  of its closed fiber:

\begin{thm}\label{smfam}
  The central fiber of $\hat\Cs/\tilde\Delta$ is naturally identified with $C_K$ and the resulting deformation of $C_K$ is universal as a  deformation of a curve with $\Is$-action.
\end{thm}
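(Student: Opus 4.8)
The plan is to verify the universality of the deformation $\hat\Cs/\tilde\Delta$ of $C_K$ (as a curve with $\Is$-action) by the usual two-step criterion: the Kodaira--Spencer map at the closed fiber $o$ is an isomorphism onto the $\Is$-invariant first-order deformation space, and the base is smooth of the expected dimension. Since the base $\tilde\Delta$ is already a smooth one-dimensional disk, the whole statement reduces to showing that the space of first-order deformations of $C_K$ that preserve the $\Is$-action is one-dimensional, and that the Kodaira--Spencer class of $\hat\Cs/\tilde\Delta$ at $o$ is a nonzero element of it. (Versality plus equality of dimensions then gives universality, because an infinitesimally versal deformation over a smooth base of the minimal dimension is universal.)

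The first step is to compute $H^1(C_K,\theta_{C_K})^\Is$. The clean route is Serre duality: $H^1(C_K,\theta_{C_K})^\vee\cong H^0(C_K,\omega_{C_K}^{\otimes 2})$ as $\Is$-representations, so I must show $H^0(C_K,\omega_{C_K}^{\otimes 2})^\Is$ is one-dimensional. Here I would exploit the triple cover $p\colon C_K\to K$ of Theorem \ref{tricov}, which is $\Is$-equivariant (indeed $\mu_3\times\Is$-equivariant after the canonical splitting), totally ramified over the size-$12$ orbit $Q$ and unramified elsewhere. By the Hurwitz formula $\omega_{C_K}\cong p^*\omega_K(2R)$ where $R=p^{-1}(Q)$ is the reduced ramification divisor of degree $12$ (each point with $e_P-1=2$), so $\omega_{C_K}^{\otimes 2}\cong p^*\omega_K^{\otimes 2}(4R)$, and I would push forward: $p_*\omega_{C_K}^{\otimes 2}$ decomposes into the three $\mu_3$-eigen-line-bundles on $K\cong\PP^1$, whose degrees are read off from the local ramification data. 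Taking $\Is$-invariants of $H^0(K,p_*\omega_{C_K}^{\otimes 2})$ amounts to computing, for each of the three line bundles $M_\chi$ on $\PP^1$ with its $\Is$-linearization, the dimension of $H^0(K,M_\chi)^\Is$; one of these contributes exactly $1$ and the others $0$. This is a finite character computation once the degrees of the $M_\chi$ and the $\Is$-action on them (via the icosahedral action on the $12$ ramification points) are pinned down; alternatively one can transport the computation through the quotient and use that the relevant space of sections corresponds to $\Is$-invariant sections of a bundle on the orbifold $K/\!/\Is=\Mcal_0(5;2,2,2)$, which is a point-like count.

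The second step is to show the Kodaira--Spencer class of $\hat\Cs/\tilde\Delta$ at $o$ is nonzero. Since we already know the invariant deformation space is one-dimensional, it suffices to show the deformation $\hat\Cs/\tilde\Delta$ is \emph{nontrivial} near $o$, i.e.\ that its nearby fibers are not $\Is$-equivariantly isomorphic to $C_K$. But by construction the general fiber of $\hat\Cs/\tilde\Delta$ is (after the degree-$3$ base change and the birational modification $f$, which is an isomorphism over $\tilde\Delta-\{o\}$) a smooth member of the Winger pencil with its $\Is$-action, and such a member is a smooth plane sextic, whereas $C_K$ is non-planar (Theorem \ref{tricov} together with Lemma \ref{h1CC}: $C_K$ carries a $\mu_3$-action, which no smooth Winger member does by Corollary \ref{a5equiv}-type rigidity, or more directly $C_K\to K$ shows $C_K$ is not embeddable by an $\Is$-invariant net). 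Hence the family is nonconstant at $o$ and the Kodaira--Spencer map, being a map from a one-dimensional space to a one-dimensional space which is nonzero, is an isomorphism. Combined with smoothness of the base this yields universality.

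\medskip
\noindent\textbf{Main obstacle.} The only real work is the first step: identifying the three $\mu_3$-eigenbundles $M_\chi$ on $K\cong\PP^1$ together with their $\Is$-linearizations precisely enough to compute $\dim H^0(K,M_\chi)^\Is$, and checking that exactly one of them contributes. One must be careful that the $\Is$-action on $R\subset C_K$ lifts the icosahedral action on $Q$ through the \emph{canonical} generator of each stabilizer $\mu_5$ (Lemma \ref{faithstab}), so the linearization is not arbitrary; getting the twist right is what makes the invariant count come out to $1$ rather than $0$ or $2$. An alternative that sidesteps the eigenbundle bookkeeping is to deform $C_K$ directly inside $\hat\Xs$: since $\hat X_o$ is the cone over the Veronese image of $K$ and $C_K\subset\hat X_o$ is cut out there, one can run the same normal-bundle/long-exact-sequence argument as in the proof of Theorem \ref{universal} and Lemma \ref{lemma:dim1}, replacing $\PP(U)$ by the (singular) cone $\hat X_o$ and handling the vertex carefully; this reduces everything to $\dim\CC[U]_6^\Is=2$ once more, via Corollary \ref{cor:Wingergen}. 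I would present the cone-deformation argument as the main line and keep the eigenbundle computation as a cross-check.
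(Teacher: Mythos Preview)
Your approach differs substantially from the paper's. The paper does not compute $H^1(C_K,\theta_{C_K})^{\Is}$ at all; instead it invokes (forward-referenced) moduli-theoretic results from Section~5 (Theorems \ref{modspcursec} and \ref{modstkcur}, which rest on the orbifold description of Proposition \ref{quta55222}) to conclude that the universal $\Is$-deformation $\mathscr{U}_S\to S$ of $C_K$ has a one-dimensional base, and then runs a degree-counting argument: the classifying map $\phi:\tilde\Delta\to S$ has some degree $e$; the $\mu_3$-action on $C_K$ extends to $S$ and is free on $S-\{o\}$ (Proposition \ref{howCkhapp}), so a generic $\lambda\in\tilde\Delta-\{o\}$ has at least $3e$ companions $\lambda'$ with $\hat C_\lambda\cong\hat C_{\lambda'}$; but by Corollary \ref{a5equiv} and the construction (the family over $\tilde\Delta-\{o\}$ is the degree-$3$ base change of the Winger pencil) there are exactly $3$ such companions, forcing $e=1$. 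Your plan to compute $\dim H^0(C_K,\omega_{C_K}^{\otimes 2})^{\Is}$ directly via the $\mu_3$-eigenbundle decomposition of $p_*\omega_{C_K}^{\otimes 2}$, or via a cone analogue of Lemma \ref{lemma:dim1}, is a legitimate and more self-contained alternative for establishing the one-dimensionality; it trades forward references for an honest character computation.

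However, your Step 2 contains a genuine gap. You assert that to show the Kodaira--Spencer class is nonzero ``it suffices to show the deformation is nontrivial near $o$, i.e.\ that its nearby fibers are not $\Is$-equivariantly isomorphic to $C_K$.'' This inference is false: a non-constant classifying map $\phi:\tilde\Delta\to S$ may well be ramified at $o$ (think $\phi(\tilde\lambda)=\tilde\lambda^2$), and then the Kodaira--Spencer class vanishes even though no nearby fiber is isomorphic to $C_K$. Knowing that generic fibers are smooth Winger members distinct from $C_K$ only gives $\deg\phi\ge 1$, not $\deg\phi=1$. To close this you must either (a) reproduce the paper's counting argument---use the $\mu_3$-symmetry on both sides together with Corollary \ref{a5equiv} to bound the number of isomorphic fibers by $3$, hence $e=1$---or (b) compute the Kodaira--Spencer class explicitly from the equation $S_0^3+G(\tilde\lambda S_0,S_1,\dots,S_5)=0$ on the cone $\hat X_o$ and verify that the first-order term is not a coboundary. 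Option (b) is exactly the ``handling the vertex carefully'' issue you already flagged as delicate, so in practice you will end up needing the $\mu_3$-counting step anyway.
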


\begin{proof}
  From the vanishing of $H^0(C_K,\theta_{C_K})$ and $H^2(C_K,\theta_{C_K})$ we know that the smooth universal deformation of $C_K$ as $\Acal_5$-curve exist, we will denote it by $\mathscr{U}_S\to S$. From the Theorems \ref{modspcursec} and \ref{modstkcur} below, the base of this family must have dimension 1, hence we can assume $S$ is an one dimensional open disk.
  Moreover the $\mu_3$-action on $C_K$ extends naturally to $\mathscr{U}_S\to S$ which will fix the central fiber and act nontrivially on $S$. Proposition \ref{howCkhapp} implies that this action is actually free on $S-\{o\}$. Now by the universal property of local universal deformation we can find a unique map $\phi:\tilde\Delta\to S$ such that $\hat{\Cs}/\tilde\Delta\cong \phi^{\ast}\mathscr{U}_S$. Let $e$ be  the degree of $\phi$.
  Then the definition implies that for any $\lambda\in\tilde\Delta-\{o\}$, we can find at least $3e$ many $\lambda'\in\tilde\Delta-\{o\}$ such that $\hat{C}_{\lambda}\cong\hat{C}_{\lambda'}$. But Corollary \ref{a5equiv} and our construction show that the only fibers isomorphic to $\hat{C}_{\lambda}$ are the three in its $\mu_3$-orbit. This implies that $e=1$. Hence $\hat\Cs/\tilde\Delta$ is universal.
\end{proof}

\section{Orbifold model and degeneration}

\subsection{\textbf{Coverings of genus zero orbifolds and their moduli}}\label{CGOM}
We will describe how to construct the coverings of a genus 0 orbifold with a prescribed (finite) Galois group and prescribed ramification data. This section is a summary of an article \textit{Geometry of the Wiman-Edge Pencil and the Wiman Curve} by B.~Farb and E.~Looijenga and we will explain in detail some of its  results that we will use. Suppose we have a descending sequence $p_1\geq\cdots \geq p_n$ of integers where $n\geq 4$ and $p_i\geq 2$. We will denote by $\vec{p}$ the tuple $(p_1,\cdots,p_n)$. Let $\pi_{0,n}$ denote the group generated by $a_1,\cdots,a_n$, subject to the only relation $a_1\cdots a_n=1$.

Let $\widetilde{\Mod}_{0,n}$ resp.\ $\widetilde{\Mod}(\vec{p})$  be the group consisting of automorphisms of $\pi_{0,n}$ which preserve  each conjugacy class of $a_i$ resp.\ permutes them in such a manner  that the conjugacy class of $a_i$ goes to the conjugacy class of $a_j$ if and only if $p_i=p_j$. These groups contain the inner automorphisms of $\pi_{0,n}$ as a normal subgroup and the quotients  $\Mod_{0,n}:=\widetilde{\Mod}_{0,n}/\Inn(\pi_{0,n})$
resp.\ $\Mod(\vec{p}):=\widetilde{\Mod}(\vec{p})/\Inn(\pi_{0,n})$ can be understood as  the mapping class group  of an $n$-pointed genus zero curve resp.\
of an orbifold of type  $(0;p_1,\cdots,p_n)$. It is clear that  $\Mod_{0,n}$ is a normal subgroup of $\Mod(\vec{p})$ with factor group the $\Scal_n$-stabilizer
of the function $i\mapsto p_i$ (denoted $\Scal(\vec p)$), so that we have a short exact sequence
\begin{equation}\label{exqModpModSnp}
  1\to \Mod_{0,n}\to \Mod(\vec{p})\to \Scal(\vec p)\to 1.
\end{equation}
These  groups also have an interpretation as (orbifold) fundamental group: $\Mod_{0,n}$ is the fundamental group of the fine moduli space $\Mcal_{0,n}$ of $n$-pointed genus zero curves (this is just the space of injective maps $\{1, \dots, n\}\to \PP^2$ modulo projective equivalance) and $\Mod(\vec{p})$ is the orbifold fundamental group of the  moduli space $\Mcal_0(\vec p)$ of orbifolds of type $(0; p_1, \dots, p_n)$. As will become clear below, the  latter is not a fine moduli space, but underlies a Deligne-Mumford stack (which we shall denote by $\MM_0(\vec p)$).

Let $G$ be a finite group with trivial centre,  for example $\Is$ or $\mathcal{S}_5$ and consider the set $\tilde{G}(\vec{p})$ of surjective group homomorphisms $\vec{g}:\pi_{0,n}\rightarrow G$ such that $\vec{g}(a_i)$ has order $p_i$.
Note that an element of $\tilde{G}(\vec{p})$ can also be considered as an ordered $n$-tuple $(g_1,\cdots,g_n)$ of generators of $G$ such that $g_i$ has order $p_i$ and $g_1\cdots g_n=1$. It is clear that  $\Aut(G)$ will act on the left of $\tilde{G}(\vec{p})$ by postcomposition. In particular,  $G$ then acts on $\tilde{G}(\vec{p})$ through inner automorphisms. In the tuple form, this action is just given by simultaneous conjugation. We will denote the quotient
$G\backslash \tilde{G}(\vec{p})$ by $G(\vec{p})$.
We let $\widetilde{\Mod}(\vec{p})$ act on the right of $\tilde{G}(\vec{p})$ by precomposition. Note that this action commutes with the $\Aut(G)$ action. It is also obvious that $\phi(\alpha x\alpha^{-1})=\phi(\alpha)\phi(x)\phi(\alpha)^{-1}$ for all $\phi\in\tilde{G}(\vec{p})$ and $\alpha,x\in\pi_{0,n}$ so that this induces a right-action of $\Mod(\vec{p})$ on ${G}(\vec{p})$.

Let $P$ be a copy of $\mathds{P}^1$. Suppose that we have an injective map $q:\{1,2,\cdots,n\}\rightarrow P$.  We put  $U_q=P-\{q_1,\cdots,q_n\}$. Choose a point $q_0\in U_q$, and  let $\alpha_i$ be a  simple loop which begins at $q_0$ and encircles $q_i$ in such a manner, that the loops
$\alpha_1, \dots, \alpha_n$ are like the petals of a flower.  Then an isomorphism  $\pi_{0,n}\cong \pi_1(U_q,q_0)$ is defined by  $a_i\leftrightarrow [\alpha_i]$.
So any $\vec{g}\in \tilde{G}(\vec{p})$  then determines a  surjective group homomorphism $\pi_1(U_q,q_0)\rightarrow G$.
By the theory of coverings, this defines  a connected  (unramified) $G$-covering $U_q(\vec{g})\to U_q$.
Moreover, for  elements $\vec{g}$ and $\vec{g'}$ of $\tilde{G}(\vec{p})$, $U_q(\vec{g})$ is $G$-isomorphic to $U_q(\vec{g'})$ if and only if
$\vec{g}$ and $\vec {g'}$ lie in the same $G$-orbit. We note that there is no non-trivial automorphisms of the $G$-covering $U_q(\vec{g})\to U_{q}$. For such automorphisms the property that it commutes with $G$-action means that this deck transformation lies in the center of $G$. Hence they are trivial since we assumed $G$ has trivial center.

Note that $U_q(\vec{g})$ is in a natural manner a complex curve  over $U_q$. By normalization we can extend this to a ramified
covering $P_q(\vec{g})\to P$. The covering space is  a projective nonsingular complex curve and $P_q(\vec{g})\rightarrow P$ is necessarily branched at $q_i$ with index $p_i$ (this is because the image of  $\alpha_i$ in $G$ has order $p_i$). For the same reason as above,  two such
coverings $P_q(\vec{g})\to P$ and $P_q(\vec{g'})\to P$ are $G$-isomorphic if and only if $\vec{g}$ and $\vec{g'}$ lie in the same orbit.
Moreover  no  covering $P_q(\vec{g})\to P$ has a nontrivial $G$-automorphism. So for a given $q$, the $G$-coverings of $P$ thus obtained  are in bijective correspondence with  $G(\vec{p})$. The absence of nontrivial automorphisms enables us to do this in families (by letting  $\vec q$ vary). In order to make this precise, we introduce two moduli functors.

Let us define the moduli functor $\Mk_{G,\vec{p}}^{M}$ of marked families as follows: given a scheme $S$ of finite type over $\CC$, let $\Mk_{G,\vec{p}}^{M}(S)$ be the set of isomorphism classes of tuple $(\phi:\mathscr{C}\to S,\Os_1,\cdots,\Os_n)$ where $\phi:\mathscr{C}\to S$ is a smooth family whose geometric fibers are connected curves endowed with a faithful $G$-action such that the orbifold quotient is of type $(0;p_1,\cdots,p_n)$ and $\Os_1,\cdots,\Os_n$ are sections of the family $\bar{\phi}: G\backslash \Cs\to S$ whose pre-image are irregular orbits with stabilizer $\mu_{p_i}$. Two such families $(\phi:\mathscr{C}\to S,\Os_1,\cdots,\Os_n)$ and $(\phi':\mathscr{C}'\to S',\Os_1',\cdots,\Os_n')$ are isomorphic if and only if we can find $G$-equivariant isomorphism between the families $\mathscr{C}\to S$ and $\mathscr{C}'\to S'$ which brings $\Os_i$ to $\Os_i'$.  We can also define the moduli functor $\Mk_{G,\vec{p}}$ of unmarked families by forgetting some information of the marked sections, i.e we allow the isomorphisms between two families can exchange $\Os_i$ and $\Os_j$ if $p_i=p_j$. Observe that for any such family $\phi:\Cs\to S$, if we restrict it to a small enough neighborhood, the local triviality of the smooth proper family implies that all its closed fibers must come from the same element in $G(\vec{p})$, i.e the only thing can be deformed in this family is the injective map $q$.

\begin{lem}\label{noauto}
  If a  $G$-curve has $G$-quotient of the type $(0;\vec{p})$, then every $G$-automorphism of that curve has finite order. {In particular,  the only $G$-automorphism that preserves each irregular $G$-orbit is the identity}.
\end{lem}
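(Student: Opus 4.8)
The plan is to reduce the statement to the faithfulness of the $G$-action on the associated orbifold base, using the structure of the $G$-curve as a ramified cover of $\PP^1$. Let $C$ be the $G$-curve and let $\bar C = G\backslash C \cong \PP^1$ be its orbifold quotient, of type $(0;\vec p)$. A $G$-automorphism $h$ of $C$ descends to an automorphism $\bar h$ of $\bar C$ that must permute the $n$ orbifold points among themselves, respecting the function $i\mapsto p_i$; since $n\geq 4$, the group of such automorphisms of $\PP^1$ is finite (it is the stabilizer in $\PGL_2$ of an $n$-point configuration with $n\geq 3$, which is finite). So after replacing $h$ by a bounded power we may assume $\bar h = \Id$. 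The remaining step is to show that a $G$-automorphism of $C$ covering the identity of $\bar C$ is itself the identity — this is exactly the assertion that no nontrivial $G$-automorphism preserves every irregular $G$-orbit, which gives the ``in particular'' clause, and combined with the first reduction yields finiteness of $\Aut_G(C)$ in general.

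For that remaining step I would argue as in the discussion preceding Lemma \ref{noauto}: the $G$-cover $C \to \bar C$ is, over the complement $U$ of the $n$ branch points, a connected unramified $G$-cover, hence corresponds to a surjection $\phi:\pi_1(U,q_0)\to G$, and a $G$-automorphism of $C$ covering $\Id_{\bar C}$ restricts to a deck transformation of this covering commuting with the $G$-action. Such a deck transformation is given by an element of the centre of $G$ (left multiplication by a central element is the general form of a $G$-equivariant deck transformation of a connected principal-homogeneous-type $G$-cover), and $G$ has trivial centre by hypothesis; hence $h$ is the identity on $U$, and by continuity (or by the uniqueness part of Riemann extension) on all of $C$. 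This is precisely the bullet already spelled out in the paragraph ``We note that there is no non-trivial automorphisms of the $G$-covering $U_q(\vec g)\to U_q$ \dots''.

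The one point requiring a little care — and the main (mild) obstacle — is the descent step: I must check that $h$ genuinely descends to a well-defined $\bar h \in \Aut(\bar C)$ and that $\bar h$ preserves the branch divisor together with its $\vec p$-labelling. Descent is automatic because $h$ commutes with $G$, so it carries $G$-orbits to $G$-orbits; and $h$ preserves ramification indices since it is an isomorphism of $C$, so the orbifold point of type $\mu_{p_i}$ goes to one of type $\mu_{p_i}$. Thus $\bar h$ lies in the finite group $\Scal(\vec p)$-equivariant subgroup of $\Aut(\PP^1,\{q_1,\dots,q_n\})$, which is finite for $n\geq 4$ (indeed for $n\geq 3$). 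Taking $N$ to be the order of $\bar h$, the automorphism $h^N$ covers $\Id_{\bar C}$, hence is trivial by the previous paragraph, so $h$ has finite order dividing $N$. This proves the lemma. $\square$
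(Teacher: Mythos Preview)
Your argument is correct and follows essentially the same route as the paper's proof: descend the $G$-automorphism to $\bar C\cong\PP^1$, use that an automorphism of $\PP^1$ permuting $n\geq 3$ marked points has finite order, and then show that any $G$-automorphism covering the identity is a deck transformation and hence lies in the (trivial) centre of $G$. Your write-up is in fact a bit more careful about the descent step and the centre argument than the paper's, which contains the slip ``$\Aut(G)$ has no center'' where ``$G$ has trivial centre'' is meant.
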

\begin{proof}
  To prove the first claim, suppose $\psi$ be any $G$-automorphism of $C$. Then the $G$-quotient of $\psi$ may exchange the sections $\Os_i$ and $\Os_j$ if $p_i=p_j$. Let $o_i$ still be the image of $\Os_i$ in $G\backslash C\cong \PP^1$ and $\bar{\psi}$ be the induced map of $\psi$ on $G\backslash C$. Now there must exist a positive integer $m$ which is determined only by $\vec{p}$, such that $\bar{\psi}^m$ has each $o_i$ as one of its fixed point. Since $n\geq 4$, $\bar{\psi}^m$ must be identity and $\psi^m$ is a deck transformation. Since $\Aut(G)$ has no center, all the deck transformations must be trivial.

  For the second claim, if we have one such automorphism $\psi$ of $C$ which fixes the orbital $G$-sections, then $m=1$ and $\psi$ itself is a deck transformation. This finishes the proof.
\end{proof}
\begin{thm}\label{finemod}
  The moduli functor $\mathfrak{M}_{G, \vec{p}}^M$ is represented by a fine moduli space $BG^M(\vec{p})$, which is quasi-projective variety with
  $\sharp (G(\vec{p})/\Mod_{0,n})$ many connected components. By assigning to an element of  $\mathfrak{M}_{G, \vec{p}}^M(S)$  its $G$-quotient, we get a natural morphism $BG^M(\vec{p})\to \Mcal_{0,n}$ which is a finite cover of degree $\sharp G(\vec{p})$.
\end{thm}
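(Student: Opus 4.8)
The plan is to construct the moduli space $BG^M(\vec p)$ directly as a finite étale cover of $\Mcal_{0,n}$ and then verify that it represents the functor $\Mk^M_{G,\vec p}$. First I would recall that $\Mcal_{0,n}$ is a fine moduli space for $n$-pointed genus zero curves, carrying a universal family $\mathcal{P}\to\Mcal_{0,n}$ with $n$ disjoint sections $q_1,\dots,q_n$. Over the open complement $\mathcal{U}=\mathcal{P}\setminus\bigcup_i q_i(\Mcal_{0,n})$, which is fiberwise $P\setminus\{q_1,\dots,q_n\}$, the fundamental group of a fiber is $\pi_{0,n}$ (after the petal-loop identification), and $\pi_1(\mathcal U)$ sits in an extension of $\Mod_{0,n}$ by $\pi_{0,n}$. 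An element of $G(\vec p)=G\backslash\tilde G(\vec p)$ is an orbit of surjections $\vec g\colon\pi_{0,n}\to G$; since $\Mod_{0,n}$ acts on $G(\vec p)$ on the right, the stabilizer $\Mod_{0,n}(\vec g)$ of a chosen orbit is a finite-index subgroup of $\Mod_{0,n}=\pi_1(\Mcal_{0,n})$ and therefore corresponds to a finite étale cover $\Mcal_{0,n}(\vec g)\to\Mcal_{0,n}$. Setting $BG^M(\vec p):=\coprod_{[\vec g]\in G(\vec p)/\Mod_{0,n}}\Mcal_{0,n}(\vec g)$ gives a variety with exactly $\sharp(G(\vec p)/\Mod_{0,n})$ connected components, quasi-projective because $\Mcal_{0,n}$ is, and the structure morphism to $\Mcal_{0,n}$ has total degree $\sum_{[\vec g]}[\Mod_{0,n}:\Mod_{0,n}(\vec g)]=\sharp G(\vec p)$ by orbit-counting.

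Next I would pull back the universal pointed curve to $BG^M(\vec p)$ and build the universal $G$-cover on it. Over each component $\Mcal_{0,n}(\vec g)$ the monodromy representation $\pi_1(\mathcal U|_{\Mcal_{0,n}(\vec g)})\to G$ determined by $\vec g$ is now globally well-defined (this is exactly why we passed to the stabilizer subgroup), so it produces a connected unramified $G$-covering of $\mathcal U|_{\Mcal_{0,n}(\vec g)}$; fiberwise normalization over the universal $P$ extends it to a smooth proper family $\bar{\mathcal C}_{\vec g}\to\Mcal_{0,n}(\vec g)$ of $G$-curves with $G$-quotient of type $(0;\vec p)$, and the $n$ pointed sections lift to $n$ sections $\mathcal O_1,\dots,\mathcal O_n$ of the quotient family $G\backslash\bar{\mathcal C}_{\vec g}\cong\mathcal P|_{\Mcal_{0,n}(\vec g)}$ labelling the branch divisors with the prescribed stabilizers $\mu_{p_i}$. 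Taking the disjoint union over $[\vec g]$ gives a family $(\bar{\mathcal C}\to BG^M(\vec p),\mathcal O_1,\dots,\mathcal O_n)\in\Mk^M_{G,\vec p}(BG^M(\vec p))$, the candidate universal object.

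Finally I would prove representability, i.e. that for every $S$ the map $\Hom(S,BG^M(\vec p))\to\Mk^M_{G,\vec p}(S)$ induced by pulling back the candidate universal family is a bijection. Given $(\phi\colon\mathcal C\to S,\mathcal O_1,\dots,\mathcal O_n)$, the $G$-quotient $\bar\phi\colon G\backslash\mathcal C\to S$ with its $n$ labelled sections is a family of $n$-pointed genus zero curves (the fibers are $\PP^1$ since the orbifold has genus zero and $n\ge 4$), hence classified by a unique morphism $S\to\Mcal_{0,n}$; and the $G$-action on $\mathcal C$ upgrades this, since the monodromy of the covering $\mathcal C\setminus(\text{branch locus})\to G\backslash\mathcal C\setminus(\text{branch locus})$ over each connected component of $S$ picks out a well-defined $\Mod_{0,n}$-orbit in $\tilde G(\vec p)/G=G(\vec p)$ — constant along $S$ by the local-triviality remark preceding the theorem — so the morphism factors uniquely through the corresponding $\Mcal_{0,n}(\vec g)\subset BG^M(\vec p)$. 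Uniqueness on both sides uses Lemma \ref{noauto}: the $G$-covers constructed over $P$ have no nontrivial $G$-automorphisms preserving the marked orbits, so the isomorphism reconstructing $\mathcal C$ from the pulled-back family is unique, and two classifying morphisms inducing isomorphic families must coincide. The finite cover $BG^M(\vec p)\to\Mcal_{0,n}$ of degree $\sharp G(\vec p)$ is then exactly the $G$-quotient morphism of the theorem. The main obstacle is the last step — checking that the $G$-action on an arbitrary family $\mathcal C/S$ is rigid enough to be captured by a locally constant monodromy datum and that normalization commutes with base change so that $\mathcal C$ is genuinely recovered as the pullback of $\bar{\mathcal C}$; here the triviality of the automorphism group from Lemma \ref{noauto} together with the absence of a center in $G$ (which kills deck transformations) is what makes descent work and forces the bijection.
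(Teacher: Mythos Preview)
Your argument is correct and reaches the same conclusion, but the route differs from the paper's in two respects worth flagging.

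First, where the paper works upstairs on the configuration space $P^{(n)}$ of ordered $n$-tuples in $\PP^1$, builds the tautological cover $\tilde P^{(n)}\to P^{(n)}$ of degree $\#G(\vec p)$, and then quotients by the $\Aut(P)$-action (checking via Lemma~\ref{noauto} that the central $\mu_2\subset\SL_2$ acts trivially), you bypass this by working directly over $\Mcal_{0,n}$ and invoking the Galois correspondence for its fundamental group $\Mod_{0,n}$ acting on $G(\vec p)$. Your construction is tidier, but be explicit about why the homomorphism $\pi_1(\mathcal U|_{\Mcal_{0,n}(\vec g)})\to G$ extending $\vec g$ exists and is unique: for $\gamma$ in that fundamental group, conjugation by $\gamma$ on the fiber group $\pi_{0,n}$ sends $\vec g$ to a $G$-conjugate (this is what membership in the stabilizer $\Mod_{0,n}(\vec g)$ means), and since $G$ has trivial center the conjugating element is uniquely determined, which is what forces the extension to be a well-defined homomorphism. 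The paper's hypothesis that $G$ is centerless is used here just as essentially as in its own proof.

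Second, the paper constructs the universal family by producing local universal deformations $(\Us_{\tilde U,\vec g}\to\tilde U)$ and then gluing them, appealing to the absence of nontrivial $G$-automorphisms to make the gluing canonical. You instead build the universal family in one stroke as a global $G$-cover of the pulled-back punctured universal curve, then normalize. This is more direct; the only point to be careful about is that relative normalization over the sections indeed yields a smooth family of curves and commutes with base change, which holds because the ramification is tame (simple cyclic of order $p_i$) along disjoint smooth sections. Once that is granted, your verification of representability (quotient to $\Mcal_{0,n}$, lift via monodromy, uniqueness via Lemma~\ref{noauto}) is equivalent to the paper's gluing argument, just packaged differently.
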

\begin{proof}
  By considering all the possibilities of $q$, we obtain a covering map from the set of all isomorphism types of covering space mentioned above to the configuration space $P^{(n)}$ of injective maps $q:\{1,\cdots,n\}\rightarrow P$. This defines a diagram
  \begin{equation}\label{confspcov}
    \tilde P^{(n)}\to  P^{(n)}
  \end{equation}
  This is an unramified covering of degree $\# G(\vec{p})$. We want  to lift the action of  $\Aut(P)\cong \PSL_2(\CC)$ to this diagram. For this we must a priori  pass to its universal cover $\widetilde\Aut(P)\cong \SL_2(\CC)$. This is a  $\mu_2$-cover. By Lemma \ref{noauto} it is clear that  $\mu_2\subset\widetilde\Aut(P)$ acts trivially on $ \tilde P^{(n)}$. So the  free  $\Aut(P)$-action on $P^{(n)}$ lifts to the morphism (\ref{confspcov}). If we  divide out  by this action (which  we can also implement by  fixing  $q_1,q_2,q_3$ to be  $0,1,\infty$ and varying  the other points), we obtain a morphism of algebraic varieties
  \begin{equation}
    {B}G^M(\vec{p})\rightarrow \Mcal_{0,n},
  \end{equation}
  which is still an unramified covering of degree $\# G(\vec{p})$.

  We claim the variety $BG^M(\vec{p})$ is the fine moduli space of the moduli functor $\mathfrak{M}_{G, \vec{p}}^M$. To see this we only need to construct the universal family over $BG^M(\vec{p})$.

  Given a smooth $G$-curve $C$ and a collection of irregular orbits $\{O_1,\cdots,O_n \}$ as before, we assume its $G$-quotient
  $G\backslash C$ is represented by the point $q\in \Mcal_{0,n}$ and $C$ is recovered from $G\backslash C$ by
  $\vec{g}\in G(\vec{p})$. Given a small open neighborhood  {$U\subset \Mcal_{0,n}$} of $q$, we have an universal deformation
  $(\mathscr{P}_U\to U,\mathscr{O}_{1,U},\cdots,\mathscr{O}_{n,U})$ of $G\backslash C$ which is a family of $n$-marked projective lines and
  $\Os_{i,U}:U\to \mathscr{P}_U$ are sections of the family. The construction described above gives a deformation
  $(\mathscr{U}_{\tilde{U},\vec{g}}\to \tilde{U},\Os_{1,\tilde{U}},\cdots,\Os_{n,\tilde{U}})$ of $C$ where $\tilde{U}$ is isomorphic to $U$ and
  $\Os_{i,\tilde{U}}:\tilde{U}\to G\backslash\Us_{\tilde{U}}$
  are sections associated to $\Os_{i,U}$. Note that this means $\tilde{U}$ can be treated as a open neighborhood of $[C]\in BG^M(\vec{p})$. We will denote such families by only $\mathscr{U}_{\tilde{U},\vec{g}}\to \tilde{U}$ if no ambiguity happens.

  It is clear that this family $(\mathscr{U}_{\tilde{U},\vec{g}}\to \tilde{U},\Os_{1,\tilde{U}},\cdots,\Os_{n,\tilde{U}})$ is an universal deformation of smooth algebraic curve with $G$-action. Moreover the construction implies that
  $\mathscr{U}_{\tilde{U},\vec{g}}\to \tilde{U}$ is also universal for its nearby fibres.

  These universal deformations actually glue into a family $(\mathscr{U}_{BG^M(\vec{p})}\to BG^M(\vec{p}),\Os_1,\cdots,\Os_n)$. For if we have two such
  local deformations $\mathscr{U}_{\tilde{U},\vec{g}}\to \tilde{U}$ and
  $\mathscr{U}_{\tilde{U}',\vec{g}}\to \tilde{U}'$ with another $G$-curve
  $[C'',O_1'',\cdots,O_n'']\in \tilde{U}\cap \tilde{U'}$. Since two deformations are both universal for generic fibers, they are both universal
  deformations for $(C'',O_1'',\cdots,O_n'')$. This implies
  $\mathscr{U}_{\tilde{U},\vec{g}}\to {\tilde{U}\cap\tilde{U'}}$ and
  $\mathscr{U}_{\tilde{U'},\vec{g}}\to {\tilde{U}\cap\tilde{U'}}$ can be
  identified together and the absence of automorphism group implies this identification map is unique. By gluing all such universal deformations
  in $BG^M(\vec{p})$, we get a family $(\mathscr{U}_{BG^M(\vec{p})}\to BG^M(\vec{p}),\Os_1,\cdots,\Os_n)$.

  This is also an universal family. To see this, let us consider any other smooth family of $G$-curves $(\mathscr{C}_B\to B,\Os_{1,B},\cdots,\Os_{n,B})$. For any $t_0\in B$ and a small open neighborhood $\tilde{V}_{t_0}$ of $t_0$ in $B$,
  the restriction $\mathscr{C}_B\to {\tilde{V}_{t_0}}$ is a deformation of
  $\Cs_{t_0}$ and it's irregular orbits. Hence we have an unique locally defined map $f_{t_0}:\tilde{V}_{t_0}\to \tilde{U}_{t_0}$ such that $(\mathscr{C}_B\to B,\Os_{1,B},\cdots,\Os_{n,B})|_{\tilde{V}_{t_0}}\cong f_{t_0}^{\ast}(\mathscr{U}|_{\tilde{U}_{t_0}}\to\tilde{U}_{t_0},\Os_{1,\tilde{U}_{t_0}},\cdots,\Os_{n,\tilde{U}_{t_0}})|_{\tilde{U}_{t_0}}$ where $\mathscr{U}|_{\tilde{U}_{t_0}}\to\tilde{U}_{t_0}$ is the universal deformation of $\Cs_{t_0}$ defined above. Same reason as above these $f_{t_0}$ glue to a morphism $f:B\to BG^M(\vec{p})$ such that $f^{\ast}\mathscr{U}_{BG^M(\vec{p})}\cong \mathscr{C}_B$ and such map is unique. The number of connected component comes from the fact that $\Mcal_{0,n}$ is connected with fundamental group $\Mod_{0,n}$. This finishes our proof.
\end{proof}
\begin{thm}\label{thm:orbifoldmoduli}
  The functor  $\mathfrak{M}_{G, \vec p}$ is given by a Deligne-Mumford stack $\BB G(\vec{p})$ whose underlying coarse moduli space $BG(\vec{p})$ is a quasi-projective variety with $\sharp (G(\vec{p})/\Mod(\vec{p}))$ connected components.
  By assigning to an element of  $\mathfrak{M}_{G, \vec p}(S)$  its $G$-quotient, we get a natural morphism  $\BB G(\vec{p})\to \MM_0(\vec p)$ of stacks resp.\ $BG(\vec{p})\rightarrow \mathcal{M}_0(\vec p)$ of varieties. This a finite cover of degree $\sharp G(\vec{p})$.
\end{thm}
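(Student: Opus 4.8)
The plan is to realise $\BB G(\vec p)$ as a finite quotient of the fine moduli scheme $BG^M(\vec p)$ produced in Theorem~\ref{finemod}, in complete parallel with the proof of that theorem. First I would let the finite group $\Scal(\vec p)$ act algebraically on $BG^M(\vec p)$: a permutation $\sigma\in\Scal(\vec p)$ preserves the function $i\mapsto p_i$, so $(\phi\colon\Cs\to S,\Os_1,\dots,\Os_n)\mapsto(\phi\colon\Cs\to S,\Os_{\sigma(1)},\dots,\Os_{\sigma(n)})$ is a natural automorphism of the functor $\Mk_{G,\vec p}^{M}$, and since that functor is representable this yields an action of $\Scal(\vec p)$ on $BG^M(\vec p)$ which covers the standard action of $\Scal(\vec p)$ on $\Mcal_{0,n}$ by permuting marked points. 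By its very definition the unmarked functor is obtained from the marked one by allowing isomorphisms to permute equal-order sections, i.e. $\Mk_{G,\vec p}=[\Mk_{G,\vec p}^{M}/\Scal(\vec p)]$ as stacks, so I would set
\[
  \BB G(\vec p):=[BG^M(\vec p)/\Scal(\vec p)],\qquad BG(\vec p):=BG^M(\vec p)/\Scal(\vec p).
\]
Since $BG^M(\vec p)$ is a quasi-projective variety and $\Scal(\vec p)$ is finite, the first is a separated Deligne-Mumford stack (the atlas $BG^M(\vec p)\to\BB G(\vec p)$ is finite \'etale and the diagonal is finite), and the second is its coarse moduli space, again quasi-projective as the geometric quotient of a quasi-projective variety by a finite group; the universal family over $\BB G(\vec p)$ is the $\Scal(\vec p)$-equivariant universal family over $BG^M(\vec p)$, descended to the quotient.

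Next I would pin down the automorphism groups and count the components. If a smooth $G$-curve $C$ with $G$-quotient of type $(0;\vec p)$ is represented by $x\in BG^M(\vec p)$, then $\sigma\in\Scal(\vec p)$ fixes $x$ exactly when some $G$-automorphism of $C$ permutes its $n$ irregular orbits according to $\sigma$; by Lemma~\ref{noauto} such an automorphism, if it exists, is unique, and the group $\Aut_G(C)$ of all $G$-automorphisms of $C$ is finite. Hence $\operatorname{Stab}_{\Scal(\vec p)}(x)\cong\Aut_G(C)$ is finite and coincides with the automorphism group of $[C]$ in $\BB G(\vec p)$, confirming both that $\BB G(\vec p)$ is Deligne-Mumford and that $BG(\vec p)$ is its coarse space. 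For the components: the $\Scal(\vec p)$-action on $\pi_0(BG^M(\vec p))=G(\vec p)/\Mod_{0,n}$ (Theorem~\ref{finemod}) is the residual action of $\Scal(\vec p)=\Mod(\vec p)/\Mod_{0,n}$ induced by the right $\Mod(\vec p)$-action on $G(\vec p)$ through the exact sequence (\ref{exqModpModSnp}); therefore $\pi_0(BG(\vec p))=(G(\vec p)/\Mod_{0,n})/\Scal(\vec p)=G(\vec p)/\Mod(\vec p)$, which gives the asserted number $\sharp(G(\vec p)/\Mod(\vec p))$ of connected components.

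Finally, for the morphism to $\MM_0(\vec p)$: the cover $\Mcal_{0,n}\to\Mcal_0(\vec p)$ corresponds to the normal subgroup $\Mod_{0,n}$ of $\Mod(\vec p)$ with quotient $\Scal(\vec p)$, so $\MM_0(\vec p)=[\Mcal_{0,n}/\Scal(\vec p)]$. The degree $\sharp G(\vec p)$ finite \'etale map $BG^M(\vec p)\to\Mcal_{0,n}$ of Theorem~\ref{finemod} is $\Scal(\vec p)$-equivariant by construction, so it descends to a representable finite morphism $\BB G(\vec p)=[BG^M(\vec p)/\Scal(\vec p)]\to[\Mcal_{0,n}/\Scal(\vec p)]=\MM_0(\vec p)$, \'etale of degree $\sharp G(\vec p)$, and this morphism is induced by passing to $G$-quotients of the families; on coarse spaces it becomes the finite cover $BG(\vec p)\to\Mcal_0(\vec p)$ of degree $\sharp G(\vec p)$. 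The step I expect to be the main obstacle is the first identification --- checking that the unmarked functor really is $[\Mk_{G,\vec p}^{M}/\Scal(\vec p)]$, and in particular handling the descent of the marked sections along $\Scal(\vec p)$-torsors --- and this is exactly where Lemma~\ref{noauto} (finiteness of $\Aut_G(C)$ together with rigidity of the full marking) does the real work, just as the complete absence of automorphisms did in Theorem~\ref{finemod}.
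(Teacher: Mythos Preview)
Your proposal is correct and follows essentially the same approach as the paper: both construct $\BB G(\vec p)$ as the $\Scal(\vec p)$-quotient of the marked moduli space, invoke Lemma~\ref{noauto} to control the isotropy, and read off the component count from the exact sequence (\ref{exqModpModSnp}). The only cosmetic difference is that the paper performs the quotient one step earlier, dividing the configuration-space cover $\tilde P^{(n)}\to P^{(n)}$ by the product $\Aut(P)\times\Scal(\vec p)$ in one go, whereas you first pass to $BG^M(\vec p)=\tilde P^{(n)}/\Aut(P)$ and then divide by $\Scal(\vec p)$; since the two actions commute this is the same construction, and your stack-theoretic phrasing (identifying $\operatorname{Stab}_{\Scal(\vec p)}(x)\cong\Aut_G(C)$ explicitly) is arguably a bit cleaner.
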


\begin{proof}
  Theorem \ref{finemod} gives the diagram
  \begin{equation}\label{BGMhat}
    \mathscr{U}_{{B}G(n)}\to  {B}G^M(\vec{p})\rightarrow \Mcal_{0,n},
  \end{equation}
  The first arrow defines an element of $\mathfrak{M}_{G,\vec{p}}({B}G^M(\vec{p}))$ and as we have proved it can be regarded as  a fine moduli space for the tuple $(C;O_1,\cdots,O_n)$, where $C$ is a smooth projective $G$-curve, whose $G$-quotient is an orbifold of type $(0;p_1, \dots, p_n)$
  and irregular $G$-orbits  have been numbered  $(O_1,\cdots,O_n)$ such that $O_i$ has size $ |G|/ p_i$.

  The modular interpretation of diagram (\ref{confspcov}) shows that the obvious action of the  finite group $\Scal(\vec p)$ on $ P^{(n)}$ (which permutes the factors) also lifts to this diagram. This commutes with the action of $\Aut(P)$, but the product action of $\Aut(P)\times \Scal(\vec p)$ may have  nontrivial isotropy in $ P^{(n)}$, as some nontrivial element in $\Aut(P)$  could permute the ramification points  $(q_1, \dots , q_n)$ in a way as to preserve their weights.
  Therefore from Lemma \ref{noauto} the  quotient by this action is a Deligne-Mumford stack
  \begin{equation}\label{BGM}
    \Cs_{\BB G (\vec{p})}\to \BB G(\vec{p})\rightarrow \MM_0 (\vec p)
  \end{equation}
  Its modular interpretation is that of (\ref{BGMhat}), except that there is no numbering of the irregular orbits with the same type. The second map of (\ref{BGM}) is a finite cover whose underlying morprhism of coarse moduli space $BG(\vec{p})\rightarrow \Mcal_0(\vec p)$ is the one appearing in the theorem.
  A  fiber of  $BG(\vec{p})\rightarrow \Mcal_0(\vec p)$  over a non-orbifold point is  as a $\Out(G)\times \Mod(\vec{p})$-set identified with $G(\vec{p})$. In particular, since $\Mod(\vec{p})$ is the fundamental group of $\Mcal_0(\vec{p})$, the connected components of $BG(\vec{p})$ are indexed by $G(\vec{p})/\Mod(\vec{p})$.
  The assertions of the theorem now follow.
\end{proof}

\begin{rmk}\label{rmk:DM}
  We have to resort to stacks, because there might exist some $G$-curve admitting an automorphism which nontrivially permutes  its irregular orbits. This is indeed happening in our situation. On the other hand  if  this does not happen, then we have indeed  a diagram of varieties
  \begin{equation*}\label{BGMhatv}
    \Cs_{BG(\vec{p})}\to BG(\vec{p})\rightarrow \Mcal_0(\vec p)
  \end{equation*}
  In particular, $\mathfrak{M}_{G, \vec p}$ has then a fine moduli space.
\end{rmk}

\subsection{\textbf{Moduli space of smooth projective genus 10 curve with faithful $\mathcal{A}_5$ Action}}

\begin{prop}\label{quta55222}
  Suppose $C$ is a smooth genus 10 curve endowed with a faithful $\mathcal{A}_5$-action. Let  $f:C\rightarrow P$ form the quotient by this action. Then $P$ is an orbifold of type $(0;5,2,2,2)$.
\end{prop}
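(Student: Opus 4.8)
The plan is to apply the Riemann--Hurwitz formula to the Galois cover $f\colon C\to P$ with group $\Acal_5$ and then solve the resulting integrality constraints. Write $h$ for the genus of $P$, let $q_1,\dots,q_k\in P$ be the branch points of $f$, and let $m_i$ be the order of the stabilizer in $\Acal_5$ of a point of $C$ lying over $q_i$. By Lemma~\ref{faithstab} this stabilizer is cyclic, so $m_i$ is the order of a cyclic subgroup of $\Acal_5$; since the nontrivial cyclic subgroups of $\Acal_5$ have order $2$, $3$ or $5$, we have $m_i\in\{2,3,5\}$ and $1-1/m_i\in\{1/2,\,2/3,\,4/5\}$. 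As $\deg f=60$, $g(C)=10$, and the fibre of $f$ over $q_i$ consists of $60/m_i$ points each of ramification index $m_i$, Riemann--Hurwitz reads $18=60(2h-2)+\sum_{i=1}^{k}\frac{60}{m_i}(m_i-1)$, i.e.
\[
  \frac{3}{10}=2h-2+\sum_{i=1}^{k}\left(1-\frac{1}{m_i}\right).
\]

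First I would pin down $h$. If $h\ge 2$ the right-hand side is at least $2$; if $h=1$ it equals $\sum_{i}(1-1/m_i)$, which is either $0$ (when $k=0$) or at least $1/2$ (when $k\ge 1$); in no case can it equal $3/10$. Hence $h=0$ and $\sum_{i=1}^{k}(1-1/m_i)=23/10$. Each summand lies in $[1/2,4/5]$, so $3\le k\le 4$. For $k=3$, the sum of three elements of $\{1/2,2/3,4/5\}$ is at most $12/5=24/10$, and starting from the maximal triple $(4/5,4/5,4/5)$ every downward replacement of a $4/5$ changes the total by $-3/10$ or by $-2/15$; since $9x+4y=3$ has no solution in nonnegative integers, no combination of such replacements reaches $23/10$, so $k\ne 3$. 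For $k=4$, write $a,b,c$ for the number of indices $i$ with $m_i=2,3,5$ respectively; then $a+b+c=4$ and, after clearing denominators, $15a+20b+24c=69$, which together give $5b+9c=9$ and hence the unique solution $(a,b,c)=(3,0,1)$. Therefore $P$ has genus $0$ and carries exactly four orbifold points, of orders $5,2,2,2$; that is, $P$ is an orbifold of type $(0;5,2,2,2)$.

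The whole argument is essentially forced once Riemann--Hurwitz is combined with the list of element orders of $\Acal_5$; the only step requiring a little care is the elementary enumeration at the end, in particular ruling out $k=3$ and ruling out any branch point of order $3$. One might instead hope to read off $h$ from the $\Acal_5$-module $H^0(C,\omega_C)$, but its decomposition has been established only for members of the Winger pencil so far, so the Riemann--Hurwitz route is the self-contained one.
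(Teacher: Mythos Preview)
Your proof is correct and follows the same route as the paper: apply Riemann--Hurwitz to the degree-$60$ Galois cover, use Lemma~\ref{faithstab} to restrict the local ramification orders to $\{2,3,5\}$, and then solve the resulting Diophantine constraint. The only difference is cosmetic: the paper works with the undivided equation $138=120g(P)+\sum_Y\alpha(Y)$ (where $\alpha(Y)\in\{30,40,48\}$) and simply asserts that ``a computation'' yields the unique solution, whereas you divide through by $60$ and carry out that enumeration explicitly.
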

\begin{proof}
  Obviously $P$ is an orbifold. By the Riemann-Hurwitz formula, we have
  \begin{equation*}
    2g(C)-2=\deg(f)(2g(P)-2)+\sum_{X\in C}(e_X-1)
  \end{equation*}
  Here $\deg(f)=\sharp\mathcal{A}_5=60$, and $e_X$ denotes the ramification index of $f$ at $X$. Since $e_X=1$ for all but finite many points, thus we may write it as $\sum_{Y\in R\subset P}\sum_{X\in f^{-1}Y}(e_X-1)$ and $R$ is the ramification locus. Denote by $G_X\subset \mathcal{A}_5$ be the stabilizer of $X\in C$.  Since $\mathcal{A}_5$ acts on each fiber transitively and $\sum_{X\in f^{-1}Y}e_X=60$, we have $\alpha(Y):=\sum_{X\in f^{-1}Y}(e_X-1)=(60-\frac{60}{\sharp\ G_X})\ge0$. This gives the equation
  \begin{equation*}
    138=120g(P)+\sum_{Y\in R}\alpha(Y).
  \end{equation*}
  By Lemma \ref{faithstab}, a stabilizer must be cyclic, and so $\alpha(Y)$ can take only the values listed in Table \ref{tva}.
  \begin{table}
    \centering
    \caption{The value of $\alpha$ according to stabilizer}\label{tva}
    \begin{tabular}{llll}
      \hline\noalign{\smallskip}
      conjugacy class of a generator & Order & Value of $\alpha$ \\
      \noalign{\smallskip}\hline\noalign{\smallskip}
      (1)                            & 1     & 0                 \\
      (12)(34)                       & 2     & 30                \\
      (123)                          & 3     & 40                \\
      (12345)                        & 5     & 48                \\
      \noalign{\smallskip}\hline
    \end{tabular}
  \end{table}
  A computation then shows that the only solution is: $g(P)=0$ with 4 orbifold  points, 3 of which have stabilizer of order 2,  and one having order 5.
  This proves our claim.
\end{proof}

We will use the methods in the last section to describe the moduli space of such curves, where we take $G=\Acal_5$ and $\vec p=(5,2,2,2)$.
Note that in that case $\Scal(\vec p)\subset \Scal_4$ is then the permutation group of the last three items (so that is a copy of $\Scal_3$).

We begin with several combinatorial lemmas.
\begin{lem}\label{g1g2}
  Let $\mathcal{A}_5(r)$ denote the set of elements of $\mathcal{A}_5$ of order $r$. Consider the $\mathcal{A}_5$-action on $\mathcal{A}_5(5)\times\mathcal{A}_5(2)$ by simultaneous conjugation. Then this action is free, and every orbit is represented by one of the following 6 pairs:
  \begin{description}
    \item[$ord(g_1g_2)=2$] $(g_1,g_2)$ equals $((12345),(12)(35))$ or $((12354),(12)(34))$,
    \item[$ord(g_1g_2)=3$]
      $(g_1,g_2)$ equals $((12345),(12)(34))$ or $((12354),(12)(45))$,
    \item[$ord(g_1g_2)=5$] $(g_1,g_2)$ equals $ ((12345),(13)(25))$ or $((12354),(13)(25))$.
  \end{description}
\end{lem}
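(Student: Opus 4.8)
The plan is to verify both claims — freeness of the simultaneous conjugation action and the enumeration of the six orbit representatives — by a direct but organized combinatorial count, rather than by brute force over all $\lvert\mathcal{A}_5(5)\rvert\cdot\lvert\mathcal{A}_5(2)\rvert = 24\cdot 15 = 360$ pairs. First I would record the relevant cardinalities: $\mathcal{A}_5$ has $24$ elements of order $5$ (falling into two conjugacy classes of size $12$, namely that of $(12345)$ and that of $(12354)$) and $15$ elements of order $2$ (a single conjugacy class). The centralizer of a $5$-cycle in $\mathcal{A}_5$ is the cyclic group it generates, of order $5$, and the centralizer of a double transposition is the Klein four-group containing it, of order $4$. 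I would then argue freeness first: if $h\in\mathcal{A}_5$ fixes a pair $(g_1,g_2)$ with $g_1$ of order $5$ and $g_2$ of order $2$, then $h$ lies in the centralizer of $g_1$, which is $\langle g_1\rangle$, and also in the centralizer of $g_2$, which is a Klein four-group; since $\gcd(5,4)=1$ the intersection is trivial, so $h=1$. Hence every orbit has size $\lvert\mathcal{A}_5\rvert = 60$, and the number of orbits is $360/60 = 6$.

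Next I would produce the six representatives and distinguish them by the invariant $\mathrm{ord}(g_1g_2)$, which is clearly constant on each orbit. Fixing $g_1 = (12345)$: its centralizer $\langle g_1\rangle$ acts on the $15$ double transpositions by conjugation with orbits of size dividing $5$, hence (since $15 = 5+5+5$ and no fixed points are possible as a $5$-cycle conjugating a double transposition to itself would force it into the centralizer) exactly three orbits of size $5$. So $\langle g_1\rangle\backslash \mathcal{A}_5(2)$ has three classes; representatives can be taken as $(12)(35)$, $(12)(34)$, $(13)(25)$ — these must be checked to be pairwise non-conjugate under $\langle g_1\rangle$, which one does by direct computation of products $g_1^k g_2 g_1^{-k}$, or more slickly by computing the values of $\mathrm{ord}(g_1 g_2)$ and observing they are $2$, $3$, $5$ respectively, hence distinct. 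Then repeating verbatim with $g_1 = (12354)$ gives the other three orbits and the other three representatives $(12)(34)$, $(12)(45)$, $(13)(25)$, again separated by $\mathrm{ord}(g_1 g_2) \in \{2,3,5\}$. Since the two classes of $5$-cycles are not fused in $\mathcal{A}_5$ (they are only fused by the outer automorphism $\iota$), no orbit with $g_1$ conjugate to $(12345)$ can coincide with one where $g_1$ is conjugate to $(12354)$, so all six are genuinely distinct; combined with the count of exactly six orbits from the freeness argument, this list is complete.

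The only genuinely computational content is the identification $\mathrm{ord}(g_1g_2)\in\{2,3,5\}$ for each of the six listed pairs and the verification that within a fixed $\langle g_1\rangle$-orbit count the three chosen double transpositions lie in distinct orbits; both are short explicit multiplications in $\mathcal{S}_5$ (for instance $(12345)\cdot(12)(35) = (13)(45)$, which has order $2$, and so on). I expect the main obstacle — really the only thing requiring care — to be keeping the bookkeeping straight: one must be consistent about the convention for composing permutations (left-to-right versus right-to-left) since this affects which concrete double transposition lands in which orbit, and one must make sure the chosen representatives for the $g_1 = (12345)$ case and the $g_1 = (12354)$ case are stated in a mutually compatible convention so that the asserted six pairs are literally correct as written. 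Beyond that the proof is essentially a clean orbit-counting argument with the centralizer orders doing all the work.
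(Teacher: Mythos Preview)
Your proposal is correct and follows essentially the same approach as the paper: establish freeness of the action, deduce there are $360/60=6$ orbits, then exhibit six representatives distinguished by the conjugacy class of $g_1$ and the order of $g_1g_2$. Your freeness argument via the coprimality of the centralizer orders $|C_{\mathcal{A}_5}(g_1)|=5$ and $|C_{\mathcal{A}_5}(g_2)|=4$ is in fact a bit cleaner than the paper's more hands-on verification, and your use of the $\langle g_1\rangle$-action on $\mathcal{A}_5(2)$ to organize the enumeration is likewise more structured than the paper's ``by calculation''.
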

\begin{proof}
  First we prove that the $\mathcal{A}_5$-action is free. If that is not is case, then we can find $a\in\mathcal{A}_5(5)$, $b\in\mathcal{A}_5(2)$ and $g\in\mathcal{A}_5$ such that $g^{-1}ag=a$ and $g^{-1}bg=b$. Without loss of generality, we may assume $a=(12345)$. Then we have $g^{-1}ag=(g(1),g(2),g(3),g(4),g(5))$. But $(12345)$ has only finite many forms, list them all we will find that $g$ must be a power of $a$. But such element can not fix $b$ under conjugation. Since for the same reason $b$ must be a power of $g$, thus a power of $a$, a contradiction. Now we have $12\times 30=360$ elements in $\mathcal{A}_5(5)\times \mathcal{A}_5(2)$ and $\mathcal{A}_5$ has 60 elements and $\mathcal{A}_5$-action is free. Thus we must have 6 different orbits.

  To list all the orbits we may take a proper element $g\in \mathcal{A}_5$ such that $g^{-1}g_1g$ is one of two following elements $(12345)$ or $(12354)$. In this case $g^{-1}g_2g$ can be any order 2 element. By calculation, we find that all possibilities are as above.
\end{proof}
\begin{lem}\label{g3g4}
  Take $h\in \mathcal{A}_5(r)$ and consider the set of $(h_1,h_2)\in \mathcal{A}_5(2)\times\mathcal{A}_5(2)$ such that $h=h_1h_2$. Then for $r=2,3,5$ this set has $r$ elements. For $r=2$, $h_1$ and $h_2$ commute. For $r=3$ or $r=5$, this is a free $\langle h\rangle$-orbit under simultaneous conjugation.
\end{lem}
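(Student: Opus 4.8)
The plan is to recast the problem as one about a single involution. If $h=h_1h_2$ with $h_1,h_2\in\mathcal{A}_5(2)$, then
\[
h_1hh_1^{-1}=h_1(h_1h_2)h_1=h_2h_1=(h_1h_2)^{-1}=h^{-1},
\]
so $h_1$ inverts $h$; conversely, if $h_1$ is an involution with $h_1hh_1^{-1}=h^{-1}$ then $h_2:=h_1h$ satisfies $h_2^2=h_1hh_1h=h^{-1}h=1$. Hence $(h_1,h_2)\mapsto h_1$ identifies the set of factorizations $h=h_1h_2$ into involutions with $\{\,h_1\in\mathcal{A}_5(2): h_1hh_1^{-1}=h^{-1},\ h_1h\neq 1\,\}$, and since such an $h_1$ carries $\langle h\rangle$ to itself it lies in $N:=N_{\mathcal{A}_5}(\langle h\rangle)$.

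For $r=3$ or $r=5$ I would first pin down $N$. Counting the elements of order $3$ (there are $20$) resp.\ of order $5$ (there are $24$) shows that $\mathcal{A}_5$ has $10$ resp.\ $6$ subgroups of order $r$, so $|N|=6$ resp.\ $10$; and since the conjugacy class of an element of order $3$ resp.\ $5$ has $20$ resp.\ $12$ elements, its centralizer is exactly $\langle h\rangle$, so $N/\langle h\rangle$ embeds into $\Aut(\langle h\rangle)$ with image of order $2$, which must be inversion. Thus $N$ is dihedral of order $2r$, it has exactly $r$ involutions, every one of them inverts $h$, and the condition $h_1h\neq 1$ is automatic because $h^{-1}$ has order $r\geq 3$. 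This gives the count $r$. For the orbit assertion, note that $\langle h\rangle$ acts on the solution set by simultaneous conjugation — this preserves the set since $(hh_1h^{-1})(hh_2h^{-1})=h(h_1h_2)h^{-1}=h$ — and this action is free: if $h^{k}$ conjugates $(h_1,h_2)$ to itself then $h^{k}$ commutes with $h_1$, and if $h^{k}\neq 1$ then $\langle h^{k}\rangle=\langle h\rangle$ (as $r$ is prime), forcing $h_1$ to commute with $h$, contradicting $h_1hh_1^{-1}=h^{-1}\neq h$. A free action of the order-$r$ group $\langle h\rangle$ on an $r$-element set is transitive, so we obtain a single free orbit.

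For $r=2$ we have $h^{-1}=h$, so the condition on $h_1$ becomes: $h_1$ is an involution commuting with $h$ and $h_1\neq h$ (the last clause because $h_1=h$ would give $h_2=1$). The centralizer $C_{\mathcal{A}_5}(h)$ is a Klein four-group — it has order $4$, since the class of double transpositions has $15$ elements — hence contains exactly three involutions, $h$ together with two others; discarding $h_1=h$ leaves two admissible choices of $h_1$, each paired with $h_2=h_1h$, the third involution of $C_{\mathcal{A}_5}(h)$. So the set has $2$ elements, and $h_1,h_2$ both lie in the abelian group $C_{\mathcal{A}_5}(h)$, hence commute.

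The argument is essentially bookkeeping and I do not anticipate a serious obstacle; the only point needing care is the case $r=2$, where one must remember to discard the degenerate factorizations $h=h\cdot 1$ and $h=1\cdot h$ — excluded precisely because $h_1$ and $h_2$ are required to have order exactly $2$ — in order to arrive at two pairs rather than three, and then observe that the two surviving involutions necessarily commute, as they both lie in the Klein four-group $C_{\mathcal{A}_5}(h)$.
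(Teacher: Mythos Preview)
Your argument is correct. The key identity $h_1hh_1^{-1}=h^{-1}$ reduces the count to enumerating involutions in the normalizer $N_{\mathcal{A}_5}(\langle h\rangle)$, and your determination of that normalizer as dihedral of order $2r$ (for $r=3,5$) and of the centralizer as a Klein four-group (for $r=2$) is accurate; the free-orbit claim follows cleanly from the primality of $r$.

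The paper's own proof consists of the single sentence ``This Lemma comes from computation,'' i.e.\ a direct enumeration. Your route is genuinely different and more conceptual: rather than listing all factorizations, you exploit the dihedral structure of $N_{\mathcal{A}_5}(\langle h\rangle)$ to get the count and the orbit structure simultaneously, and the commutation in the $r=2$ case falls out of the abelianness of the Klein four-group rather than inspection. The advantage of your approach is that it explains \emph{why} the answer is $r$ and why the $\langle h\rangle$-action is free and transitive; the paper's brute-force check has the virtue of being foolproof but offers no such insight. Either is adequate for a lemma of this size.
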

This Lemma comes from computation.
\begin{prop}\label{g1g2g3g4}
  The tuples $(g_1,g_2,g_3,g_4)\in\mathcal{A}_5(5,2,2,2)$ come in three types, according to the order $r$ of $g_1g_2$. For $r=2$ we have 4 elements, for $r=3$ we have $6$ elements and for $r=5$ we have 10 elements, giving therefore 20 elements in total.

  The group $\Mod_{0,4}$ acts transitively on the subset of $\mathcal{A}_5(5,2,2,2)$ whose first coordinate is conjugate to  $(12345)$ resp.\   $(12354)$, so that $\Mod_{0,4}$ has two orbits in $\Acal_5(5,2,2,2)$.

  The group $\Mod(5,2,2,2)$ acts transitively on the subset of $\mathcal{A}_5(5,2,2,2)$ whose first coordinate is conjugate to  $(12345)$ resp.\   $(12354)$, so that  $\Mod(5,2,2,2)$ has two orbits in $\mathcal{A}_5(5,2,2,2)$.
\end{prop}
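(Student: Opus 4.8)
The plan is to treat the three claims in turn, resting everything on Lemmas~\ref{g1g2} and~\ref{g3g4}. Throughout put $h=g_1g_2$, whose order $r\in\{2,3,5\}$ is a conjugation invariant and hence a well-defined function on $G(\vec p)=\mathcal{A}_5\backslash\mathcal{A}_5(5,2,2,2)$.

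\textbf{The count.} First note that simultaneous conjugation acts \emph{freely} on $\mathcal{A}_5(5,2,2,2)$: the stabilizer of $(g_1,g_2,g_3,g_4)$ is contained in that of $(g_1,g_2)\in\mathcal{A}_5(5)\times\mathcal{A}_5(2)$, which is trivial by Lemma~\ref{g1g2}. The map $(g_1,g_2,g_3,g_4)\mapsto(g_1,g_2)$ descends to $G(\vec p)\to(\mathcal{A}_5(5)\times\mathcal{A}_5(2))/\mathcal{A}_5$; by Lemma~\ref{g1g2} the target has six elements, two with $r=2$, two with $r=3$, two with $r=5$, and in each such pair exactly one representative has $g_1$ conjugate to $(12345)$ and one has $g_1$ conjugate to $(12354)$. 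Over a fixed representative $(g_1,g_2)$ with $\operatorname{ord}(g_1g_2)=r$ the completions are the pairs $(g_3,g_4)\in\mathcal{A}_5(2)^2$ with $g_3g_4=(g_1g_2)^{-1}$; by freeness distinct completions give distinct classes, and by Lemma~\ref{g3g4} there are exactly $r$ of them. Hence the fiber over each pair-class has $r$ elements, so $|G(\vec p)|=\sum 2r=2\cdot2+2\cdot3+2\cdot5=20$, distributed as $4$, $6$, $10$ according to $r=2,3,5$. (That every such tuple generates $\mathcal{A}_5$, so that $\mathcal{A}_5(5,2,2,2)=\tilde G(\vec p)$ and $\sharp G(\vec p)=20$ as in Theorem~\ref{finemod}, is easy: for $r=3,5$ one has $\langle g_1,h\rangle=\mathcal{A}_5$, and for $r=2$ Lemma~\ref{g3g4} gives distinct commuting involutions $g_3,g_4$, so $\langle g_1,g_3,g_4\rangle$ already equals $\mathcal{A}_5$.)

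\textbf{The $g_1$-invariant, and reduction of (3) to (2).} The puncture of weight $5$ is the only one of its weight, so every element of $\widetilde{\Mod}_{0,4}$, and likewise of $\widetilde{\Mod}(5,2,2,2)$, carries $a_1$ to a conjugate of $a_1$ in $\pi_{0,4}$; thus precomposition changes $g_1$ only within its $\mathcal{A}_5$-conjugacy class, which is therefore an invariant of both actions on $G(\vec p)$. Since $\mathcal{A}_5$ has two classes of order-$5$ elements, each group has at least two orbits, and by the refinement above the two invariant subsets ($g_1\sim(12345)$ resp.\ $g_1\sim(12354)$) have $2+3+5=10$ elements each. By the exact sequence~\eqref{exqModpModSnp}, $\Mod_{0,4}$ is normal in $\Mod(5,2,2,2)$, so transitivity of $\Mod_{0,4}$ on each $10$-element subset at once gives transitivity of $\Mod(5,2,2,2)$ there; with the $g_1$-obstruction this yields exactly two orbits for $\Mod(5,2,2,2)$ too. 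It therefore remains only to prove that $\Mod_{0,4}$ is transitive on each of the two $10$-element subsets.

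\textbf{Transitivity of $\Mod_{0,4}$, and the main obstacle.} I would use the classical fact that $\Mod_{0,4}$ is free of rank two, generated by (the images of) the Dehn twists $A_{12}$ and $A_{23}$ about simple closed curves enclosing the punctures $\{1,2\}$ and $\{2,3\}$. As moves on $G(\vec p)$ these act, after the usual normalization by a global conjugation, by $A_{12}\colon(g_1,g_2,g_3,g_4)\mapsto(g_1,g_2,h^{-1}g_3h,h^{-1}g_4h)$ (so $g_1,g_2$, hence $r$, are unchanged), and $A_{23}\colon(g_1,g_2,g_3,g_4)\mapsto(g_1,(g_2g_3)g_2(g_2g_3)^{-1},(g_2g_3)g_3(g_2g_3)^{-1},g_4)$ (so $g_1$ is unchanged but $r=\operatorname{ord}(g_1g_2)$ can change). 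Within a fixed pair-class with $r\in\{3,5\}$, Lemma~\ref{g3g4} says the $r$ completions form a single free $\langle h\rangle$-orbit and $A_{12}$ realizes the generator $h^{-1}$, so $A_{12}$ already acts transitively on those completions; it then suffices to bridge the three values $r=2,3,5$ and, for $r=2$, the two completions over one pair-class (which $A_{12}$ fixes). The plan is to do this by an explicit finite computation: starting from the representatives of Lemma~\ref{g1g2} extended via Lemma~\ref{g3g4}, with $g_1=(12345)$ fixed, chase words in $A_{12}^{\pm1},A_{23}^{\pm1}$ through $\mathcal{A}_5$ and check that all ten classes with $g_1\sim(12345)$ are reached; the $g_1\sim(12354)$ half is formally identical. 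The crux — and the main obstacle — is precisely this last verification: pinning down the correct (convention-dependent) conjugation formulas for the Dehn-twist generators of $\Mod_{0,4}$, including the normalization needed to pass from $\widetilde{\Mod}$ to $\Mod$, and then checking that the $A_{23}$-type moves really do connect the strata $r=2$, $r=3$, $r=5$. Everything else reduces to the two cited lemmas and the elementary invariance of the conjugacy class of $g_1$.
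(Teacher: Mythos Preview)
Your approach is essentially the same as the paper's: the count via Lemmas~\ref{g1g2} and~\ref{g3g4} and freeness, the invariance of the conjugacy class of $g_1$, and the two-step transitivity argument (first using the ``$A_{12}$''-type move $(a_1,a_2,a_3,a_4)\mapsto(a_1,a_2,(a_1a_2)a_3(a_1a_2)^{-1},(a_1a_2)a_4(a_1a_2)^{-1})$ to act transitively within each $r$-stratum, then an explicit second $\Mod_{0,4}$-move to bridge the strata) all match the paper exactly. The paper simply carries out the explicit verification you flag as the ``main obstacle'': it writes down a single $\Mod_{0,4}$-element and checks on concrete representatives that it sends an $r=2$ class to an $r=5$ class and an $r=5$ class to an $r=3$ class, which together with the within-stratum transitivity finishes the proof.
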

\begin{proof}
  The first statement comes from the last two lemmas. First by Lemma \ref{g1g2}, we may assume $(g_1,g_2)$ is one of the 6 pairs listed there. Then $g=g_1g_2$ has order $r$ where $r=2,3$ or 5. Lemma \ref{g3g4} implies that if $(g_3,g_4)$ and $(g'_3,g'_4)$ lie in the same orbit, then they can be transformed into each other through  conjugation by a some power of $g$. But similarly as above, such a conjugation cannot fix the first two coordinates. So the first statement follows.

  For the second and the third statements, let us take any such $g=(g_1,g_2,g_3,g_4)$ where $g_1=(12345)$. First of all, by Lemma \ref{g3g4}, if we fix the order of $g_1g_2=(12345)g_2$, then $\Mod_{0,4}$ acts transitively inside these `types'. The transformation map is given by
  \begin{equation*}\label{morptrans1}
    (a_1,a_2,a_3,a_4)\rightarrow(a_1,a_2,(a_1a_2)a_3(a_1a_2)^{-1},(a_1a_2)a_4(a_1a_2)^{-1})
  \end{equation*}
  To show that $\Mod_{0,4}$ acts transitively on `types', we only need to work on some examples. We first let $g$ is the element $((12345),(12)(35),(15)(34),(14)(35))$. Here we find $g_1g_2$ has order 2. Then the map
  \begin{equation*}
    (a_1,a_2,a_3,a_4)\rightarrow (a_2^{-1}a_1a_2,a_3a_2a_3^{-1},a_3,a_2^{-1}a_4a_2)
  \end{equation*}
  will map $g$ to $((15432),(14)(25),\ast,\ast)$. By calculation $(15432)(14)(25)=(13245)$ which has order 5. Next we let $g$ to be the element $((12345),(13)(25),(12)(34),(24)(35))$. In this case $g_1g_2=(12345)(13)(25)$ is of order 5. This time the same map has image $((14235),(15)(24),\ast,\ast)$. By calculation $(14235)(15)(24)=(354)$ which has order 3. Since $\Mod_{0,4}$ is a subgroup of $\Mod(5,2,2,2)$, the same results holds for $\Mod(5,2,2,2)$.

  On the other hand, by the definition of $\Mod_{0,4}$ resp. $\Mod(5,2,2,2)$, the conjugacy class of $a_1$ must be fixed. Thus $g_1$ must lie in the conjugacy class of $(12345)$ (not of $(12354)$). In summary, we have 20 elements in $\mathcal{A}_5(5,2,2,2)$ which make up two $\Mod_{0,4}$ resp. $\Mod(5,2,2,2)$-orbits of size 10.
\end{proof}
When $g_1=(12345)$ we made a table of all values of $\Acal_5(5,2,2,2)$ see Table \ref{tvg1234}.
\begin{table}
  \centering
  \caption{Values of $(g_1,g_2,g_3,g_4)$}\label{tvg1234}
  \begin{tabular}{lllll}
    \hline\noalign{\smallskip}
    Order of $g_1g_2$ & $g_1$   & $g_2$    & $g_3$    & $g_4$    \\
    \noalign{\smallskip}\hline\noalign{\smallskip}
    2                 & (12345) & (12)(35) & (15)(34) & (14)(35) \\
    2                 & (12345) & (12)(35) & (14)(35) & (15)(34) \\
    3                 & (12345) & (12)(34) & (15)(24) & (24)(35) \\
    3                 & (12345) & (12)(34) & (13)(24) & (15)(24) \\
    3                 & (12345) & (12)(34) & (24)(35) & (13)(24) \\
    5                 & (12345) & (13)(25) & (14)(25) & (15)(23) \\
    5                 & (12345) & (13)(25) & (12)(34) & (24)(35) \\
    5                 & (12345) & (13)(25) & (13)(45) & (12)(34) \\
    5                 & (12345) & (13)(25) & (15)(23) & (13)(45) \\
    5                 & (12345) & (13)(25) & (24)(35) & (14)(25) \\
    \noalign{\smallskip}\hline
  \end{tabular}
\end{table}

\begin{prop}\label{howCkhapp}
  There are only two smooth algebraic curves {of genus 10} endowed with a faithful $\Acal_5$-action up to an $\Acal_5$-isomorphism which admit an $\Acal_5$-automorphism of order three. Moreover these are the only possible smooth $\Acal_5$-curves with non-trivial $\Acal_5$-automorphisms.
\end{prop}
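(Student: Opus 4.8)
The plan is to study the group $\Aut_{\Acal_5}(C)$ of $\Acal_5$-automorphisms of $C$ (those commuting with the $\Acal_5$-action). Any $\phi\in\Aut_{\Acal_5}(C)$ descends to an automorphism $\bar\phi$ of the quotient orbifold $C/\Acal_5$, which by Proposition~\ref{quta55222} is of type $(0;5,2,2,2)$. The assignment $\phi\mapsto\bar\phi$ is injective: if $\bar\phi=\mathrm{id}$ then $\phi$ is a deck transformation of the Galois cover $C\to C/\Acal_5$, hence an element of $\Acal_5$, and an element of $\Acal_5$ commuting with all of $\Acal_5$ lies in $Z(\Acal_5)=\{1\}$. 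An orbifold automorphism of $(0;5,2,2,2)$ fixes the unique order-$5$ point and permutes the three order-$2$ points; placing the order-$5$ point at $\infty$ it is an affine transformation of $\CC$, and a finite group of affine transformations of $\CC$ fixes the barycentre of an orbit, hence is cyclic; since it also injects into $\Scal_3$, it has order $1$, $2$, or $3$. Thus $\Aut_{\Acal_5}(C)$ is cyclic of order $1$, $2$, or $3$, and if its order is $3$ then the three order-$2$ points form an equilateral triangle about the centre of rotation, so $C/\Acal_5$ is the equianharmonic orbifold of this type (unique up to isomorphism).

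Next I would exclude order $2$. Suppose $\phi$ has order $2$; let $f\colon C\to\PP^1=C/\Acal_5$ be the quotient and $o_1$ (order $5$), $o_2$ (order $2$) the two orbifold points fixed by $\bar\phi$ (a Möbius involution fixing the order-$5$ point must fix exactly one of the order-$2$ points, an odd set admitting no fixed-point-free involution). The fibre $f^{-1}(o_1)$ is the $\Acal_5$-set $\Acal_5/C_5$ with $C_5$ a Sylow $5$-subgroup, so $\phi$ acts on it through $N_{\Acal_5}(C_5)/C_5$, a group of order $2$ whose nontrivial element is fixed-point-free; hence $\phi$ is trivial or free on these $12$ points. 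If free, $C/\langle\phi\rangle$ is a smooth curve with faithful $\Acal_5$-action having an orbit of size $6$, hence a point-stabilizer of order $10$, which is dihedral---contradicting Lemma~\ref{faithstab}. The same reasoning on $f^{-1}(o_2)=\Acal_5/C_2$, where $N_{\Acal_5}(C_2)/C_2$ is again of order $2$, would in the free case give an $\Acal_5$-stabilizer of order $4$, a Klein four-group, again against Lemma~\ref{faithstab}. So $\phi$ fixes $f^{-1}(o_1)$ and $f^{-1}(o_2)$ pointwise and has at least $12+30=42$ fixed points, whereas Riemann--Hurwitz for $C\to C/\langle\phi\rangle$ bounds this by $22$. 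Hence no order-$2$ element exists, and $\Aut_{\Acal_5}(C)$ is trivial or cyclic of order $3$.

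For the order-$3$ case, fix a generator $\phi$ of $\Aut_{\Acal_5}(C)\cong\FFF$. Again $\bar\phi$ fixes the order-$5$ point $o_1$, so $\phi$ preserves $f^{-1}(o_1)=\Acal_5/C_5$ and acts there through the order-$2$ group $N_{\Acal_5}(C_5)/C_5$; as $3$ is coprime to $2$ this action is trivial, so $\phi$ fixes all $12$ points of $f^{-1}(o_1)$. Writing $C/\langle\phi\rangle$ for the quotient, which carries a faithful $\Acal_5$-action since $\Acal_5\cap\langle\phi\rangle=\{1\}$ in $\Aut(C)$, and noting that $\phi$ is totally ramified at each of its fixed points, Riemann--Hurwitz reads $12=3\,g(C/\langle\phi\rangle)+\#\mathrm{Fix}(\phi)$; with $\#\mathrm{Fix}(\phi)\ge12$ this forces $g(C/\langle\phi\rangle)=0$ and $\#\mathrm{Fix}(\phi)=12$. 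So $C\to C/\langle\phi\rangle$ is a $\FFF$-cover of a projective line with faithful $\Acal_5$-action, totally ramified over exactly the $12$ points under $\mathrm{Fix}(\phi)$; these form an $\Acal_5$-invariant set of size $12$, hence, as a union of $\Acal_5$-orbits of size $\ge12$, the unique $\Acal_5$-orbit of size $12$. The construction and uniqueness in Theorem~\ref{tricov} depend only on a projective line with $\Acal_5$-action and its size-$12$ orbit, so they apply verbatim: $C$ is $\Acal_5$-equivariantly determined by the $\Acal_5$-line $C/\langle\phi\rangle$, has genus $10$, and conversely each $\Acal_5$-line yields such a curve, on which the deck group $\FFF$ gives an order-$3$ $\Acal_5$-automorphism (so by the first paragraph its full $\Acal_5$-automorphism group is that $\FFF$).

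It remains to count the $\Acal_5$-lines. A faithful $\Acal_5$-action on $\PP^1$ is an embedding $\Acal_5\hookrightarrow\PSL_2(\CC)$, and all such have the same image up to conjugacy; that image has trivial centralizer in $\PSL_2(\CC)$, so its normalizer is finite, injects into $\Aut(\Acal_5)$, contains $\Acal_5$, and---$\Acal_5$ being a maximal finite subgroup of $\PSL_2(\CC)$---equals $\Acal_5$, realizing only inner automorphisms. Hence the $\Acal_5$-lines form an $\Out(\Acal_5)$-torsor, so there are exactly two, namely $K$ and its twist $K'$ by $\iota$; these are not $\Acal_5$-isomorphic, which is the triple-conic case in the proof of Corollary~\ref{a5equiv}. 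The resulting covers $C_K$ and $C_{K'}$ are then not $\Acal_5$-isomorphic either, since an $\Acal_5$-isomorphism would conjugate $\FFF=\Aut_{\Acal_5}(C_K)$ onto $\Aut_{\Acal_5}(C_{K'})$ and descend to an $\Acal_5$-isomorphism $K=C_K/\FFF\to C_{K'}/\FFF=K'$. Therefore $C_K$ and $C_{K'}$ are exactly the two smooth genus-$10$ $\Acal_5$-curves admitting an order-$3$ $\Acal_5$-automorphism, and any smooth genus-$10$ $\Acal_5$-curve with a non-trivial $\Acal_5$-automorphism is one of them. The main obstacle is the order-$2$ exclusion together with the Riemann--Hurwitz bookkeeping in the order-$3$ step---getting the fibre actions over the orbifold points and the (in)equalities exactly right---plus the normalizer identification $N_{\PSL_2(\CC)}(\Acal_5)=\Acal_5$, which rests on the classification of finite subgroups of $\PSL_2(\CC)$.
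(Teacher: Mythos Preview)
Your argument is correct, and it proceeds by a genuinely different route from the paper in both the order-$2$ exclusion and the order-$3$ classification. For order $2$, the paper works locally: it places the orbifold points at $\infty,-1,1,0$ with $\bar\psi\colon z\mapsto -z$, and observes that over the fixed order-$2$ point $0$ the map $w\mapsto w^2$ forces the lift to be $w\mapsto \pm iw$, hence of order $4$. Your approach is global: you analyse the $\Acal_5$-equivariant permutation of the fibres $\Acal_5/C_5$ and $\Acal_5/C_2$ via their Weyl groups $N(C_5)/C_5$ and $N(C_2)/C_2$, rule out the free case by Lemma~\ref{faithstab} on the quotient, and then overcount fixed points against Riemann--Hurwitz. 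For order $3$, the paper applies Riemann--Hurwitz blindly to get two candidates for $\mu_3\backslash C$ (genus $0$ with $12$ branch points, or genus $4$ unramified) and eliminates the genus-$4$ case by invoking properties of Bring's curve from Remark~\ref{norS5act}; you instead first pin down $12$ fixed points over the order-$5$ orbifold point (since $\phi$ of order $3$ must act trivially through the order-$2$ group $N(C_5)/C_5$), and then Riemann--Hurwitz immediately forces $g'=0$, $F=12$, so the genus-$4$ alternative never arises. Your route is more self-contained in the order-$3$ step, while the paper's local-coordinate trick is slicker for order $2$; both land on Theorem~\ref{tricov} for the final uniqueness.

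One point deserving a sentence of justification: when you pass to $C/\langle\phi\rangle$ you assert the $\Acal_5$-action is faithful ``since $\Acal_5\cap\langle\phi\rangle=\{1\}$''. Strictly, you need that if $g\in\Acal_5$ acts trivially on the quotient then for every $x$ either $gx=x$ or $gx=\phi x$, and irreducibility of $C$ forces one identity globally, whence $g=1$ or $g=\phi$; the latter is excluded because $\phi$ centralises $\Acal_5$ and $Z(\Acal_5)=1$. This is routine, but worth spelling out.
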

\begin{proof}
  We have showed in Theorem \ref{tricov} that there are at least two different smooth $\Acal_5$-curves $C_K$ resp. $C'_K$ admitting an $\Acal_5$-automorphism of order three. In fact, by quotient out these $\Acal_5$-automorphisms, we get a smooth conic $K$ resp. $K'$ with faithful $\Acal_5$-action. Moreover $K$ and $K'$ are all the possibilities that $\Acal_5$ acts on a copy of $\PP^1$.

  We only need to show they are all possible cases. If not, let $C$ be a smooth curve of genus 10 endowed with a faithful $\Acal_5\times \mu_3$-action. Then Riemann-Hurwitz formula and Lemma \ref{faithstab} implies that the smooth curve $\mu_3\backslash C$ has two possibilities. One is of genus 0 with 12 ramification points which is isomorphic to the known cases by the uniqueness part of Theorem \ref{tricov}. The other one is of genus 4 with no ramifications. However in this case Remark \ref{norS5act} implies that $\mu_3\backslash C$ has two irregular orbits of size 12 and one irregular orbit of size 30 which contradicts to Proposition \ref{quta55222}.

  To show the last assertion, note that by Proposition \ref{quta55222}, we only need to show that we don't have a smooth $\Acal_5$-curve which admits an $\Acal_5$-automorphism cyclic of order two. If not, assume that $C$ is a smooth $\Acal_5$-curve which admits an $\Acal_5$-automorphism $\tilde{\psi}$ of order two. Let $\psi$ be the map induced from $\tilde{\psi}$ by quotient out the $\Acal_5$-action. Without loss of generality, we may assume the four orbifold points on $P$ are $\infty,-1,1$ and $0$ and the map $\psi$ is given by $z\to -z$. Let us concentrate on the point $0$ which is an orbifold point of order 2 and fixed by $\psi$. Under suitable local coordinate, the quotient map $f:C\to P$ is given by $w\to z=w^2$. The equivariant condition implies $\tilde{\psi}$ is of the form $w\to i w$ or $w\to -iw$. In particular it is of order four. A contradiction!

\end{proof}

\subsection{\textbf{Degeneration}}
Now we will allow the two of the order 2 orbifold points come together into one point (which we shall call the \emph{special point}). This defines an $\mathcal{A}_5$-cover $C'$ of $P'$ where $P'$ is an orbifold of genus 0 and 3 orbifold points. We  will study $C'$ form the property of curves in the last section.

Let $P$ be an orbifold of type $(0;5,2,2,2)$. The orbifold points are named as $z_1,z_2,z_3,z_4$. And $C$ is smooth genus 10 curve with faithfully $\mathcal{A}_5$-action. The morphism $f:C\rightarrow P$ is canonical map by quotient $\mathcal{A}_5$ which is also a ramified $\mathcal{A}_5$-covering of $P$ branched at 4 orbifold points. Let us take a open disk $D\subseteq P$ which contains $z_3$ and $z_4$, but not $z_1$ and $z_2$, so that $f^{-1}D\to D$ is an $\mathcal{A}_5$-cover ramified over  $z_3$ and $z_4$ only. Let $\gamma$ be a simple arc in $D$ that joins $z_3$ with $z_4$.  Let $D'\subset f^{-1}D$ be a connected component of $f^{-1}D$ and  $\gamma'$ be a connected component of $f^{-1}\gamma\cap D'$. We have the following Lemma:
\begin{lem}\label{primplygn}
  The curve $\gamma'$ is a polygon which is  homotopic to a \emph{nonseparating} embedded circle on $C$
  (this means that its complement is connected). The number of its edges is give by $2\Ord(g_3g_4)$ where $g_3$ and $g_4$ are the same as the one defined in Proposition \ref{g1g2g3g4}.
\end{lem}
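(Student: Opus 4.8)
The plan is to compute $f^{-1}\gamma$ explicitly from the monodromy of the $\Acal_5$-cover — this gives the edge count and the fact that $\gamma'$ is an embedded circle at once — and then to deduce that $\gamma'$ is non-separating from an $\Acal_5$-equivariant dual-graph count.

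First I would fix the local picture. Let $(g_1,g_2,g_3,g_4)$ be the tuple of Proposition \ref{g1g2g3g4} that recovers $C$ from $P$, so $g_3,g_4$ are involutions and, since $g_1g_2g_3g_4=1$, $r:=\Ord(g_3g_4)=\Ord(g_1g_2)\in\{2,3,5\}$. The component $D'$ is a branched Galois cover of $D$ with deck group the dihedral subgroup $H=\langle g_3,g_4\rangle$ of order $2r$, and the small loops $\alpha_3,\alpha_4$ about $z_3,z_4$ in $D\setminus\{z_3,z_4\}$ have monodromy $g_3,g_4$. Since $\gamma\setminus\{z_3,z_4\}$ is contractible, $f^{-1}(\gamma\setminus\{z_3,z_4\})\cap D'$ is a disjoint union of $|H|=2r$ open arcs, which I label by $H$; the index-$2$ branching over $z_3$ (resp.\ over $z_4$) glues the end of the arc $h$ to the end of $hg_3$ (resp.\ of $hg_4$). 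Thus $\gamma'$ is the graph with vertex set $H/\langle g_3\rangle\sqcup H/\langle g_4\rangle$ and one edge $h$ from $h\langle g_3\rangle$ to $h\langle g_4\rangle$; every vertex has valence $2$, and the cycle through the arc $e$ runs through $(g_3g_4)^j$ and $(g_3g_4)^jg_3$, $j=0,1,2,\dots$, so, since $g_3\notin\langle g_3g_4\rangle$, it first returns to $e$ after exactly $2r$ steps. Hence $\gamma'$ is a single embedded $2r$-gon and therefore has $2\Ord(g_3g_4)$ edges; running over all components of $f^{-1}D$ shows that $f^{-1}\gamma$ is a disjoint union of $k:=30/r$ such circles, forming one $\Acal_5$-orbit (these are the vanishing cycles that appear when $z_3$ and $z_4$ collide).

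For the non-separating statement I would look at $C\setminus f^{-1}\gamma=f^{-1}(P\setminus\gamma)$. As a curve $P\setminus\gamma$ is an open disc, and it carries the two remaining orbifold points $z_1,z_2$, of orders $5$ and $2$, so $\pi_1^{\mathrm{orb}}(P\setminus\gamma)\cong\ZZ/5\ast\ZZ/2$ and $C\setminus f^{-1}\gamma$ has exactly $m:=[\Acal_5:\langle g_1,g_2\rangle]$ connected components, permuted transitively by $\Acal_5$. Since $\langle g_1,g_2\rangle$ contains elements of orders $5$ and $2$, its order is divisible by $10$, whence $m\le 6\le 30/r=k$. Now form the multigraph $\Gamma$ with vertex set the $m$ components and edge set the $k$ circles $\gamma'_i$, an edge being incident to the one or two components it borders. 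Since $C$ is connected, so is $\Gamma$, hence its first Betti number equals $k-m+1\ge 1$ and $\Gamma$ is not a tree. As $\Acal_5$ acts on $\Gamma$ by graph automorphisms transitively on the edges, no edge can be a bridge — for then every edge would be one and $\Gamma$ would be a tree. Thus deleting $\gamma'$ keeps $\Gamma$ connected, i.e.\ $C\setminus\gamma'$ is connected, i.e.\ $\gamma'$ is non-separating; in particular the $2r$-gon $\gamma'$ is isotopic to a non-separating smooth embedded circle.

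The monodromy bookkeeping is routine. The step that must be set up with care is the dual-graph count: one needs the right value (or at least a good upper bound) for the number $m$ of components of $C\setminus f^{-1}\gamma$, which is where the subgroup structure of $\langle g_1,g_2\rangle$ enters, and one must note that an edge $\gamma'_i$ bordering a single component (a loop of $\Gamma$) is harmless, since a loop is never a bridge.
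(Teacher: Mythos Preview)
Your argument is correct and follows the same architecture as the paper's: a monodromy computation over $D$ for the polygon structure and edge count, followed by a dual-graph argument exploiting the transitive $\Acal_5$-action on $\pi_0(f^{-1}\gamma)$ for the non-separating claim. The one substantive difference is in how the numerical contradiction is reached. The paper argues by contradiction: if $\gamma'$ (hence every $\Acal_5$-translate) were separating, the graph $\Gamma$ would be a tree, forcing $\#\pi_0(C\setminus f^{-1}\gamma)=\#\pi_0(f^{-1}\gamma)+1=30/r+1\in\{16,11,7\}$, none of which is the index of a subgroup of $\Acal_5$. You instead identify $\#\pi_0(C\setminus f^{-1}\gamma)$ directly as $[\Acal_5:\langle g_1,g_2\rangle]\le 6\le 30/r$, so $\Gamma$ is not a tree and hence (by edge-transitivity) has no bridges. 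Your route is a little more informative---it pins down the number of complementary pieces rather than just excluding a value---but both rest on the same equivariant graph picture.
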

\begin{proof}
  To see the first assertion, note first that $\gamma$ is the deformation retract of $D$. Since $f$ is an ramified $\mathcal{A}_5$-cover over $D$ with ramification in the end point $z_3$ and $z_4$ of $\gamma$ only,  a deformation retraction of $D$ onto $\gamma$ lifts (uniquely) to a deformation retraction of $f^{-1}D$ onto $f^{-1}\gamma$. Since the map $f:f^{-1}\gamma\to\gamma$ has simple ramification only over the end points of  $\gamma$, every connected component will be a polygon (so homeomorphic to a circle) with the vertices being the preimage of $z_3$ and $z_4$.

  To see the number of its edges, we will working on the monodromy of $f:C\to  P$. Let us take a generic point $x\in D$ and fix a lifting $x'\in f^{-1}x$. We will have a monodroy map $\rho_{x'}: \pi_1(P-\{z_1,z_2,z_3,z_4\},x)\to \Acal_5$ as following: since $f$ is an $\Acal_5$-cover  after removing $\{z_1,z_2,z_3,z_4\}$, there exists an unique element $g_l$ for any $l\in\pi_1(P-\{z_1,z_2,z_3,z_4\},x)$ such that $g_l.x'$ is the ending point of the lifting of $l$ whose beginning point is $x'$. We will let  $\rho_{x'}(l)=g_l$. Clearly the uniqueness of the lifting implies that $\rho_{x'}(l_1l_2)=\rho_{x'}(l_1)\rho_{x'}(l_2)$ and $\rho_{g.x'}=g\rho_{x'}g^{-1}$.

  Let $s_3$ resp. $s_4$ a simple loop in $D\subset P$ based at $x$ encircles $z_3$ resp. $z_4$ only. The curve $s$ is a simple loop base at $x$ and homotopic to $\partial D$ in $D-\{z_3,z_4\}$. Without loss of generality, we will assume $s_3$, $s_4$ and $s$ are oriented counterclockwise. Since $f^{-1}\gamma$ is the deformation retract of $f^{-1}D$, the number of its edges is $2\Ord(\rho_{x'}(s))$. The latter equals to $2\Ord(\rho_{x'}(s_3)\rho_{x'}(s_4))=2\Ord(g_3g_4)$.

  To see it is nonseparating, suppose otherwise so that $C-\gamma'$ is not connected.  It is clear that  $f^{-1}\gamma$ resp. $f^{-1}(C-\gamma)$ is  $\Acal_5$-invariant and $\Acal_5$ acts transitively on $\pi_0(f^{-1}\gamma)$ resp. $\pi_0(C-f^{-1}\gamma)$. Since $\gamma'$ is separating, the same is the other connected component of $f^{-1}\gamma$. Hence we have  the following
  \begin{equation*}
    \sharp \pi_0(C-f^{-1}\gamma)=\sharp \pi_0(f^{-1}\gamma)+1=\frac{60}{\sharp \hbox{Stabilizer of $\gamma'$}}+1
  \end{equation*}
  However combining the Proposition \ref{g1g2} and the proof above implies that this is impossible.
\end{proof}

\begin{prop}\label{nodalcurve}
  The curve  $C'$ is a nodal curve of arithmetic genus 10 whose nodes lie over the special point.
\end{prop}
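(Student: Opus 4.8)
The plan is to realise $C'$ as the flat limit of the nearby smooth $\Acal_5$-curves and to read its singularities off from the local structure of the covering along the vanishing arc $\gamma$.

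First I would turn the coalescing into a one-parameter degeneration. Choose a family of $4$-pointed lines $(\mathscr{P};z_1,z_2,z_3,z_4)\to\Delta$ over a disk $\Delta$ with $\mathscr{P}_t\cong\PP^1$, keeping $z_1,z_2$ fixed and letting $z_3(t),z_4(t)$ be distinct for $t\neq 0$ but both tend to the special point as $t\to 0$, so that $\mathscr{P}_0=P'$. Over $\Delta\setminus\{0\}$ the recipe of Section~\ref{CGOM}, applied to the tuple $\vec g=(g_1,g_2,g_3,g_4)\in\Acal_5(5,2,2,2)$ of Proposition~\ref{g1g2g3g4}, produces an $\Acal_5$-cover $\mathscr{C}^{\circ}\to\Delta\setminus\{0\}$ with smooth genus $10$ fibres $C_t$; by properness it extends (after a finite base change of the parameter if necessary, which does not affect the central fibre) to a flat proper family $\mathscr{C}\to\Delta$ whose central fibre is $C'$. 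I then work inside a disk $D_t\subset\mathscr{P}_t$ containing $z_3(t),z_4(t)$ and disjoint from $z_1,z_2$, with a simple arc $\gamma_t\subset D_t$ joining $z_3(t)$ to $z_4(t)$, exactly as in the paragraph preceding Lemma~\ref{primplygn}; the disk $D_t$ collapses onto the special point as $t\to 0$.

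The heart of the argument is the local picture of $f_t^{-1}(D_t)\to D_t$. This cover is branched only at $z_3(t),z_4(t)$, with local monodromies $g_3,g_4$; since these are involutions and $\Ord(g_3g_4)=\Ord(g_1g_2)=:r$, the group $\langle g_3,g_4\rangle$ has order $2r$, so the connected components of $f_t^{-1}(D_t)$ are in bijection with $\langle g_3,g_4\rangle\backslash\Acal_5$, i.e.\ there are $30/r$ of them, each a $\langle g_3,g_4\rangle$-Galois cover of $D_t$. A Riemann--Hurwitz count (two branch points, each carrying $r$ ramification points of index $2$) gives Euler characteristic $2r-r-r=0$ for each, so each component is an annulus whose core circle is a component of $f_t^{-1}(\gamma_t)$ --- the $2r$-gon of Lemma~\ref{primplygn}. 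Now the cyclic subgroup $\langle g_3g_4\rangle$ of order $r$ meets every point-stabilizer (a conjugate, inside $\langle g_3,g_4\rangle$, of $\langle g_3\rangle$ or $\langle g_4\rangle$) trivially, hence acts freely on each component, with quotient the degree-$2$ cover of $D_t$ branched at $z_3(t)$ and $z_4(t)$. In a coordinate $u$ on $D_t$ with $z_3,z_4$ the two roots of $u^2=\varepsilon(t)$, $\varepsilon(t)\to 0$, this is $\{w^2=u^2-\varepsilon\}=\{xy=\varepsilon\}$ (with $x=w-u,\ y=w+u$), which manifestly degenerates at $t=0$ to an ordinary node, the vanishing cycle being the core circle; since a free $\langle g_3g_4\rangle$-cover of a node is again a single node, each of the $30/r$ annular components limits to exactly one node lying over the special point. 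Off $f^{-1}(\bigcup_t\gamma_t)$ the family $\mathscr{C}$ is a trivial product, so $C'$ is smooth there. In sum, $C'$ is obtained from $C_t$ ($t\neq 0$ small) by pinching the $30/r$ pairwise disjoint circles $\gamma'_1,\dots,\gamma'_{30/r}$ to nodes.

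From this the proposition follows quickly: $C'$ is nodal, its $30/r$ nodes all lie over the special point (each is the image of some $\gamma'_j\subset f^{-1}(\gamma_t)$ and $\gamma_t$ collapses to that point), and the arithmetic genus is computed by Euler-characteristic bookkeeping. The normalization $\widetilde{C'}$ is $C_t$ cut along the $30/r$ disjoint circles with the $60/r$ resulting boundary circles capped off, so $\chi(\widetilde{C'})=\chi(C_t)+60/r=-18+60/r$; if $\widetilde{C'}$ has $k$ components of genera $g'_1,\dots,g'_k$ then $\sum_i g'_i=k+9-30/r$, whence $p_a(C')=\sum_i g'_i+30/r-k+1=10$. (One even finds $k=[\Acal_5:\langle g_1,g_2\rangle]$, which is $6,1,1$ for $r=2,3,5$, so that $C'$ is respectively six lines meeting pairwise in $15$ nodes, an irreducible $10$-nodal rational curve, or an irreducible $6$-nodal curve of geometric genus $4$ --- the shapes of $C_\infty$, $C_{27/5}$, $C_{-1}$ --- but this refinement is not needed here.) The one point genuinely requiring care is the claim that the flat limit of each annular piece is an ordinary node and nothing worse, and it is precisely for this that I reduce, via the free $\langle g_3g_4\rangle$-action, to the hyperelliptic double cover $\{w^2=u^2-\varepsilon\}$ and its standard local model $xy=\varepsilon$; everything else is routine, using Lemma~\ref{primplygn} and Proposition~\ref{g1g2g3g4}.
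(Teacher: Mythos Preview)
Your argument is considerably more explicit than the paper's: the paper's entire proof is the single sentence that Lemma~\ref{primplygn} exhibits each polygon as a vanishing cycle, which therefore contracts to a node over the special point. Your reconstruction of the annular component $A_t$ as a $D_r$-cover of $D_t$ (with $r=\Ord(g_3g_4)$), the Riemann--Hurwitz check that it is an annulus, and the factorisation $A_t\to A_t/\langle g_3g_4\rangle\to D_t$ down to the hyperelliptic model $\{w^2=u^2-\varepsilon\}=\{xy=\varepsilon\}$ are all correct and give a genuinely hands-on route to the same conclusion; your arithmetic-genus count and the case split $r=2,3,5$ in the final paragraph even anticipate Proposition~\ref{deg}.

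The one point that needs tightening is the sentence ``a free $\langle g_3g_4\rangle$-cover of a node is again a single node''. Read literally this is false: a connected \emph{\'etale} $\mu_r$-cover of the germ $\{xy=0\}$ is a cycle of $r$ nodes, and if you simply pull back the family $\{xy=\varepsilon\}$ along a fibrewise $r$-fold cover you typically get a non-reduced central fibre. What actually happens is that, after the base change you allow yourself (say $\varepsilon=t^r$), the correct $\mu_r$-cover of $\{xy=t^r\}$ is $\{\xi\eta=t\}$ via $(x,y)=(\xi^r,\eta^r)$; here $\mu_r$ acts by $(\xi,\eta)\mapsto(\zeta\xi,\zeta^{-1}\eta)$, which is free on the smooth fibres but \emph{fixes the node} of the central fibre, and the total space is smooth with a single ordinary node over $t=0$. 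So the $\mu_r$-action is not free at the limit --- and that is exactly why the node does not split into $r$. Your hedge ``after a finite base change of the parameter if necessary'' is doing real work at this step; making the model $\{\xi\eta=t\}\to\{xy=t^r\}$ explicit would close the argument cleanly.
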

\begin{proof}
  The Lemma \ref{primplygn} imply that this polygon is a vanishing locus for the  degeneration: it gets contracted to produce a node above the special point.
\end{proof}

\begin{rmk}
  If we consider the GIT quotient of the projective space of effective degree 11 divisors on $\PP^1$ by $\SL_{2}(\mathds{C})$, then for the obvious
  $\SL_{2}(\mathds{C})$-linearisation, a divisor is stable if only if it has no point of multiplicity $>5$ and  here semistability is equivalent to stability.
  A point $(z_1,z_2,z_3,z_4)\in\mathcal{M}_{0,4}$ with weight $(5,2,2,2)$ can be considered as a stable degree 11 divisor on  $\PP^1$ if
  weight is interpreted as multiplicity. So if we allow only two of $z_2,z_3,z_4$ to be equal, then we still get a stable point, but any other
  type of coalescing yields an unstable point. This explains why we only allow two order 2 points to coincide. For more details on this quotient see Chapter 2 of \cite{dolgachev1988point}.
\end{rmk}

Let $\tilde{C}'\to C'$ be the normalization of $C'$. By its universal property, the $\Acal_5$-action on $C'$ lifts to $\tilde{C}'$. Note that $C'$ is not necessarily irreducible, and so $\tilde{C}'$ might be disconnected.
\begin{prop}\label{prop:stab}
  Let $y\in P$ be the special point. Then for every $x\in \tilde{C}'$ over $y$, the $\Acal_5$-stabilizer of $x$ is cyclic of order $n\in\{ 2, 3,5\}$.
  In particular, the special point is an orbifold point of type $n$.
\end{prop}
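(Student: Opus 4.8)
The plan is to read off the local structure of $\tilde C'$ over the special point $y$ from the monodromy of the $\Acal_5$-cover $f\colon C'\to P'$, and then to conclude from the fact that $\Acal_5$ acts transitively on that fibre.

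First I would identify the relevant element of $\Acal_5$. Since $P'$ arises from $P$ by letting $z_3$ and $z_4$ collide, a small loop around $y$ pulls back to the loop $s_3s_4$ of Lemma~\ref{primplygn}, so the monodromy of $f$ around $y$ is conjugate to $g:=g_3g_4$, with $g_3,g_4$ as in Proposition~\ref{g1g2g3g4}. The relation $a_1a_2a_3a_4=1$ in $\pi_{0,4}$ gives $g_1g_2g_3g_4=1$, hence $g=(g_1g_2)^{-1}$ and $n:=\Ord(g)=\Ord(g_1g_2)\in\{2,3,5\}$ by Proposition~\ref{g1g2g3g4}; in particular $g\neq 1$, so $y$ is a genuine branch point of $f$.

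Next I would work over a small punctured disk $D^\ast=D\setminus\{y\}$ about $y$. By Proposition~\ref{nodalcurve} the only singularities of $C'$ over $D$ lie over $y$, so $\tilde C'\to C'$ is an isomorphism over $D^\ast$, and there $C'|_{D^\ast}\to D^\ast$ is the unramified $\Acal_5$-cover classified by $\pi_1(D^\ast)\cong\ZZ\to\Acal_5$, $1\mapsto g$. Writing a fibre as $\Acal_5$ with $\Acal_5$ acting by left translation and the monodromy acting by right translation by $g$, the connected components correspond to the left cosets in $\Acal_5/\langle g\rangle$, so there are $60/n$ of them, each a connected cyclic $n$-fold cover of $D^\ast$. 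By the Riemann extension theorem each extends over $y$ to a disk on which the map to $D$ has the form $w\mapsto w^n$; hence $\tilde C'$ has exactly $60/n$ points over $y$ and $\tilde C'\to P'$ is totally ramified of index $n$ at each of them. Moreover $\Acal_5$ permutes these points according to the left-translation action on $\Acal_5/\langle g\rangle$, which is transitive with point stabilizers the conjugates of $\langle g\rangle$; thus the $\Acal_5$-stabilizer of any $x\in\tilde C'$ over $y$ is cyclic of order $n\in\{2,3,5\}$, and since a neighbourhood of $x$ is a disk on which this $\mu_n$ acts by $n$-th roots of unity with quotient $D$, the special point is an orbifold point of type $\mu_n$.

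The step needing the most care is the claim that $C'|_{D^\ast}$ is exactly the $\Acal_5$-cover with monodromy $g_3g_4$, i.e.\ pinning down $C'$ near $y$ from the degeneration. One way is purely topological: the polygon $\gamma'$, which has $2\Ord(g_3g_4)=2n$ edges, contracts to a node $p$; since the covering has degree $60$ the preimage of $\gamma\setminus\{z_3,z_4\}$ has $60$ components, so $f^{-1}\gamma$ consists of $30/n$ polygons and the $\Acal_5$-stabilizer of $p$ is the setwise stabilizer of $\gamma'$, a dihedral group of order $2n$; on the annular neighbourhood of the two-sided circle $\gamma'$ its rotation subgroup $\langle g_3g_4\rangle$ preserves each of the two branches of $C'$ at $p$ while every reflection, being an involution of the annulus, swaps them, so after normalization the stabilizer of the branch point $x$ is exactly $\langle g_3g_4\rangle\cong\mu_n$, recovering the same answer. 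Alternatively one invokes the construction directly: $P'$ is the genus-zero orbifold with orbifold fundamental group $\pi_{0,3}$ whose three standard generators are sent to $g_1$, $g_2$, $g_3g_4$, and $C'$ is the associated $\Acal_5$-cover, so the monodromy around $y$ is $g_3g_4$ by definition.
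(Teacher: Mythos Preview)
Your argument is correct, but it is considerably more elaborate than the paper's and proceeds along a genuinely different line. The paper argues abstractly: on the connected component $\tilde C'_i$ of $\tilde C'$ containing $x$, the $\Acal_5$-stabilizer of $\tilde C'_i$ acts faithfully (no nontrivial element of $\Acal_5$ can fix a whole component of a faithful $\Acal_5$-curve), so Lemma~\ref{faithstab} forces the point stabilizer of $x$ to be cyclic; since it is nontrivial and every nontrivial element of $\Acal_5$ has order $2$, $3$ or $5$, the conclusion follows. No monodromy is computed and the actual value of $n$ is left open.

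Your approach instead pins down the stabilizer explicitly as a conjugate of $\langle g_3g_4\rangle$ via the local monodromy over $D^\ast$, and reads off $n\in\{2,3,5\}$ from Lemma~\ref{g1g2} (through Proposition~\ref{g1g2g3g4}). This buys you more than the paper's proof: you obtain the precise order $n=\Ord(g_3g_4)$ and the exact count $60/n$ of points over $y$, information the paper only extracts later (in the Remark following the proposition and in Proposition~\ref{deg}). Your alternative polygon argument, identifying the node stabilizer as the dihedral group of order $2n$ with the rotation subgroup fixing each branch, is also sound and makes the link with Lemma~\ref{primplygn} very transparent. Conversely, the paper's route is shorter and more robust: it needs nothing about the specific monodromy, only the general principle that point stabilizers for faithful actions on smooth curves are cyclic, together with the group-theoretic fact about element orders in $\Acal_5$. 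One small point worth noting is that the paper's assertion ``this stabilizer cannot be trivial'' is left without explicit justification there; your monodromy computation (showing $g_3g_4\neq 1$) supplies exactly that missing step.
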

\begin{proof}
  Denote by $\tilde C'_i$ the connected component of $\tilde C'$ which contains $x$. No nontrivial element of $\Acal_5$ can act as the identity
  on $\tilde C'_i$ and so the $\Acal_5$-stabilizer of $\tilde C'_i$ acts faithfully on $\tilde C'_i$.
  Lemma \ref{faithstab} then implies that the $\Acal_5$-stabilizer of $x$ is cyclic. This stabilizer cannot be trivial and  so the proposition then follows from the fact that the  elements of $\Acal_5-\{1\}$ have order
  $2$, $3$ or $5$.
\end{proof}

\begin{rmk}
  We can distinguish these three cases using the polygon mentioned in the Lemma \ref{primplygn}. In the order 2 case, we have four points with two in the fiber of $z_3$ and two in the fiber of $z_4$. Each point connects with the other two and no two points lie in the same fiber are connected. So the polygon  is a square. In the order 3 case, we have six points, three lie above $z_3$ and three lie above $z_4$ and we get a hexagon. Finally in the order 5 case, we have ten points, five lie above $z_3$ and five above $z_4$ and we get a decagon.
\end{rmk}

\begin{prop}\label{deg}
  Let  $n$ be defined as in Proposition \ref{prop:stab}. Then
  \begin{description}
    \item[$n=2$]  $\tilde{C}'$ is the disjoint union of 6 lines and $C'$ is the union of 6 lines, each two such lines intersect, but no three pass through the same point.
    \item[$n=3$] $\tilde{C}'$ is irreducible and is of genus 0 and $C'$ is irreducible with 10 nodes.
    \item[$n=5$] $\tilde{C}'$ is irreducible and is of genus 4 and $C'$ is irreducible with 6 nodes.
  \end{description}
\end{prop}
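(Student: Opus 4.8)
The plan is to determine $C'$ and $\tilde C'$ from three pieces of data: the number $\delta$ of nodes of $C'$, the arithmetic genus relation $p_a(C')=10$ from Proposition~\ref{nodalcurve}, and transitivity of $\Acal_5$ on the components of $\tilde C'$. Once these are in hand, the three cases $n=2,3,5$ reduce to a small Diophantine problem.

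First I would compute $\delta$. The nodes of $C'$ are exactly the images of the connected components of $f^{-1}(\gamma)$, each of which is contracted to a single node (Lemma~\ref{primplygn} and Proposition~\ref{nodalcurve}). By Lemma~\ref{primplygn} such a component is a $2\Ord(g_3g_4)$-gon whose vertices alternate between $f^{-1}(z_3)$ and $f^{-1}(z_4)$, and $\Ord(g_3g_4)=n$, this being the ramification index of $f$ over the special point, hence the order of the $\Acal_5$-stabilizer of a point of $\tilde C'$ lying above it (Proposition~\ref{prop:stab} and the Remark preceding this proposition). As $z_3$ is an orbifold point of order $2$, the set $f^{-1}(z_3)$ has $60/2=30$ elements, and each of these polygons contains exactly $n$ of them; hence $C'$ has $\delta=30/n$ nodes, i.e.\ $15$, $10$, $6$ according as $n=2,3,5$.

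Next I would set up the bookkeeping for $\tilde C'$. The curve $C'$ is connected, being a degeneration of the connected curve $C$, and $C'/\Acal_5=P'$ is irreducible, so $\Acal_5$ permutes the set of connected components of $\tilde C'$ transitively; in particular their number $c$ divides $60$ and they share a common genus $g_0$. Substituting $p_a(C')=10$, $\delta=30/n$ and $\sum_i g_i=cg_0$ into the identity $p_a(C')=\sum_i g_i+\delta-c+1$ yields
\[
 c\,(g_0-1)=9-\tfrac{30}{n}.
\]
For $n=3$ the right-hand side equals $-1$, forcing $c=1$ and $g_0=0$: $\tilde C'$ is a smooth rational curve and $C'$ is irreducible with $10$ nodes. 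For $n=5$ it equals $3$, so $(c,g_0)$ is $(1,4)$ or $(3,2)$; the value $c=3$ would produce a subgroup of $\Acal_5$ of index $3$, which is excluded by the simplicity of $\Acal_5$, so $c=1$ and $g_0=4$: $\tilde C'$ is irreducible of genus $4$ and $C'$ is irreducible with $6$ nodes. For $n=2$ the right-hand side equals $-6$, and since $g_0\ge 0$ this forces $g_0=0$ and $c=6$: $\tilde C'$ is a disjoint union of six copies of $\PP^1$.

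It remains to identify $C'$ when $n=2$. Here the $15$ nodes form a single $\Acal_5$-orbit (the fibre of $C'\to P'$ over the special point), and the dual graph is a connected graph on $6$ vertices with $15$ edges carrying an $\Acal_5$-action. Since the (unique, up to equivalence) transitive action of $\Acal_5$ on six points -- that of $\PSL_2(\FFFFF)$ on $\PP^1(\FFFFF)$ -- is $2$-transitive, each of the $\binom{6}{2}=15$ pairs of components is joined by exactly one node, so the dual graph is $K_6$ and no three components pass through a common point (which is automatic at a node). This configuration of six rational curves in $K_6$-position is precisely that of the six-line curve $C_\infty$, which identifies $C'$ with $C_\infty$ and exhibits the components as lines. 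The step I expect to be most delicate is the node count $\delta=30/n$ -- in particular the facts that the vanishing polygon has exactly $2n$ edges and that these polygons partition $f^{-1}(z_3)$ -- together with the elimination of the spurious solution $(c,g_0)=(3,2)$ when $n=5$, where the simplicity of $\Acal_5$ is what does the work.
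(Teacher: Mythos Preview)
Your proof is correct and follows essentially the same route as the paper: both compute the number of nodes as $30/n$, feed this into the arithmetic-genus identity $p_a(C')=\sum g_i+\delta-c+1$ to obtain $c(g_0-1)=9-30/n$, and eliminate the spurious $c=3$ solution when $n=5$ via the absence of an index-$3$ subgroup in $\Acal_5$. Your treatment of the $n=2$ case is in fact more thorough than the paper's, since you explicitly invoke the $2$-transitivity of the $\Acal_5$-action on six points to force the dual graph to be $K_6$, whereas the paper simply asserts the intersection pattern.
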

\begin{proof}
  Consider the dual graph $\Gamma$ of $C$. This is the graph whose vertices are indexed by the irreducible component of $C'$ and whose edges are indexed the nodes of $C'$ with an edge connecting the vertices defined by the irreducible components on which the node lies (for more details, see \cite{arbarello2011geometry} Chapter X Section 2).

  Now we have the equation that
  \begin{equation*}
    p_a(C')=\sum p_a(\tilde{C}'_i)+1-v+e
  \end{equation*}
  where the sum is over the connected components $\tilde{C}'_i$ of $\tilde{C}'$, $v$ is the number of vertices of $\Gamma$ which is also the number of irreducible components of $C'$ and  $e$ is the number of edges of $\Gamma$ which equals to the number of nodes of $C'$.
  Since $\tilde{C}'\to P$ is a $\Acal_5$-cover, $\Acal_5$ acts transitively on the set of connected components $\tilde{C}'_i$ of $\tilde{C}'$, so that
  $p_a(\tilde{C}'_i)$ is in fact independent of $i$.
  This also implies that  the number $v$ of connected components must be the index of some subgroup of $\mathcal{A}_5$ in $\mathcal{A}_5$.
  It follows that
  \begin{equation*}
    v(p_a(\tilde{C}'_i)-1)=p_a(C')-1-e=9-e
  \end{equation*}
  For the value of $e$, by the last proposition, we know that it must has only possibilities of 15, 10 and 6 (nodes must lie in the fiber of $y$ and normalization makes one nodes into two points). So the possible solutions are as follows:
  \begin{description}
    \item[$e=15$, $v=6$, $p_a(\tilde{C}')=0$] in this case $\tilde{C}'$ is the disjoint union of 6 lines, $C'$ is the union of 6 lines, every two intersecting, but no three passing through the same point and  the $\Acal_5$-orbit space  is an orbifold of type $(0;5,2,2)$.
    \item[$e=10$, $v=1$, $p_a(\tilde{C}')=0$] then $\tilde{C}'$ is a projective line, then $C'$ is irreducible curve with 10 nodes and  the $\Acal_5$-orbit space is an orbifold of type $(0;5,2,3)$
    \item[$e=6$, $v=1$, $p_a(\tilde{C}')=4$] then $\tilde{C}'$ is a genus 4 curve, $C'$ is irreducible with 6 nodes and the $\Acal_5$-orbit space is an orbifold of type $(0;5,2,5)$.
  \end{description}
  A priori  we might also have $e=6$, $v=3$. Then  $\tilde{C}'$ has 3 connected components, but since $\mathcal{A}_5$ has no subgroup of order 20, this case is impossible.

  This ends the proof.
\end{proof}
We can now  prove our main theorems.

\begin{thm}\label{modspcursec}
  { We have a fine moduli space $B\Acal_5^M(5,2,2,2)$ of the tuple $(C,O_1,O_2,O_3,O_4)$ where $C$ is a smooth quasi-projective genus 10 curves endowed with a faithful $\Acal_5$-action and tuple $(O_1,O_2,O_3,O_4)$ are marked irregular orbits. The space $B\Acal_5^M(5,2,2,2)$ has 2 connected components, each of which is a degree 10 cover of $\Mcal_{0,4}$.}
\end{thm}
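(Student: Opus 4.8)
The plan is to recognize Theorem~\ref{modspcursec} as the special case $G=\Acal_5$, $\vec p=(5,2,2,2)$, $n=4$ of the framework of Subsection~\ref{CGOM}, and then to feed in the combinatorics of Subsection~4.2. I would carry this out in three short steps.

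\textbf{Step 1: identifying the functor.} I would first check that the functor sending a $\CC$-scheme $S$ to the set of isomorphism classes of tuples $(\phi\colon\Cs\to S,\Os_1,\dots,\Os_4)$ described in the statement is precisely the functor $\Mk^{M}_{\Acal_5,(5,2,2,2)}$ of Subsection~\ref{CGOM}. The only real content here is that, for a smooth projective $\Acal_5$-curve, being of genus $10$ is equivalent to having $\Acal_5$-orbifold quotient of type $(0;5,2,2,2)$: one implication is Proposition~\ref{quta55222}, and the reverse is the Riemann--Hurwitz computation in its proof, which gives $2g-2=60\cdot(-2)+48+3\cdot 30=18$, i.e.\ $g=10$, for a curve of that orbifold type. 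One also verifies the standing hypotheses of Subsection~\ref{CGOM}: $n=4\ge 4$, the weights $(5,2,2,2)$ are weakly decreasing with each entry $\ge 2$, and $\Acal_5$ has trivial centre.

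\textbf{Step 2: representability and enumeration.} With the functor identified, Theorem~\ref{finemod} applies verbatim and produces a quasi-projective fine moduli space, which I denote $B\Acal_5^M(5,2,2,2)$, whose set of connected components is in bijection with $\Acal_5(5,2,2,2)/\Mod_{0,4}$, and for which the $\Acal_5$-quotient morphism $B\Acal_5^M(5,2,2,2)\to\Mcal_{0,4}$ is a finite cover of degree $\sharp\,\Acal_5(5,2,2,2)$; here $\Acal_5(5,2,2,2)=G(\vec p)$ is the set of $\Acal_5$-conjugacy classes of generating tuples $(g_1,g_2,g_3,g_4)$ with $g_1$ of order $5$, $g_2,g_3,g_4$ of order $2$, and $g_1g_2g_3g_4=1$. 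Now Proposition~\ref{g1g2g3g4} tells us that $\sharp\,\Acal_5(5,2,2,2)=20$ and that $\Mod_{0,4}$ acts on this $20$-element set with exactly two orbits, each of size $10$, the two orbits being distinguished by whether $g_1$ lies in the conjugacy class of $(12345)$ or of $(12354)$.

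\textbf{Step 3: conclusion.} The two $\Mod_{0,4}$-orbits give the two connected components of $B\Acal_5^M(5,2,2,2)$. Since $\Mcal_{0,4}$ is connected with fundamental group $\Mod_{0,4}$, and the covering $B\Acal_5^M(5,2,2,2)\to\Mcal_{0,4}$ corresponds to the $\Mod_{0,4}$-set $\Acal_5(5,2,2,2)$, the part of a fibre lying over one connected component is the corresponding $\Mod_{0,4}$-orbit; hence each component is a connected cover of $\Mcal_{0,4}$ of degree equal to the orbit size, namely $10$. This yields all assertions of the theorem. In short, the proof is bookkeeping layered on top of Theorem~\ref{finemod} and Proposition~\ref{g1g2g3g4}; the one place demanding care is Step~1, where one must make sure the genus-$10$ hypothesis and the orbifold-type hypothesis are genuinely interchangeable and that throughout one compares $\Acal_5$-orbits of tuples (the $20$-element set $G(\vec p)$) rather than the much larger set of tuples itself.
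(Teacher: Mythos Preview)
Your proposal is correct and follows essentially the same route as the paper: invoke Theorem~\ref{finemod} for $G=\Acal_5$, $\vec p=(5,2,2,2)$, and feed in the combinatorics of Proposition~\ref{g1g2g3g4} to count $\sharp\,\Acal_5(5,2,2,2)=20$ split into two $\Mod_{0,4}$-orbits of size $10$. Your Step~1, making explicit via Proposition~\ref{quta55222} (and its Riemann--Hurwitz converse) that the genus-$10$ condition is equivalent to orbifold type $(0;5,2,2,2)$, is a worthwhile clarification that the paper leaves implicit.
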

\begin{thm}\label{modstkcur}
  {We have a moduli stack $\ \BB\Acal_5(5,2,2,2)$ of smooth quasi-projective genus 10 curves endowed with a faithful $\Acal_5$-action, whose underlying coarse moduli space $B\Acal_5(5,2,2,2)$ is a quasi-projective variety with 2 connected components. By assigning an object of $\ \BB\Acal_5(5,2,2,2)$ its $\Acal_5$-quotient we get a degree 20 cover of $\ \MM_0(5,2,2,2)$ (resp. a degree 20 cover between coarse moduli spaces $B\Acal_5(5,2,2,2)\to \Mcal_0(5,2,2,2)$).}
\end{thm}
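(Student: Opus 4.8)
The plan is to derive both theorems, and in particular Theorem \ref{modstkcur}, from the general machinery developed in Section \ref{CGOM}, applied with the finite group $G=\Acal_5$ (which has trivial centre) and the weight vector $\vec p=(5,2,2,2)$, for which $n=4\ge 4$, all $p_i\ge 2$, and $\Scal(\vec p)\cong\Scal_3$. The first step is to recognise that the two moduli functors occurring in the statements are exactly $\Mk^M_{\Acal_5,(5,2,2,2)}$ and $\Mk_{\Acal_5,(5,2,2,2)}$. One direction is Proposition \ref{quta55222}: the $\Acal_5$-quotient of a smooth genus $10$ curve with faithful $\Acal_5$-action is an orbifold of type $(0;5,2,2,2)$. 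For the converse, any connected smooth $\Acal_5$-cover of an orbifold of type $(0;5,2,2,2)$ has, by Riemann--Hurwitz, genus $g$ with $2g-2=60(2\cdot 0-2)+(48+30+30+30)=18$, hence $g=10$; and connectedness is automatic, since the classifying homomorphism $\pi_{0,4}\to\Acal_5$ is by definition surjective. Thus a smooth $\Acal_5$-curve of genus $10$ together with an ordering of its three $\mu_2$-orbits is precisely an object of $\Mk^M_{\Acal_5,(5,2,2,2)}$, and forgetting the ordering matches $\Mk_{\Acal_5,(5,2,2,2)}$.

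With this identification in hand, Theorem \ref{modspcursec} is obtained by quoting Theorem \ref{finemod}: $\Mk^M_{\Acal_5,(5,2,2,2)}$ is represented by a quasi-projective fine moduli space $B\Acal_5^M(5,2,2,2)$, with $\sharp\big(\Acal_5(5,2,2,2)/\Mod_{0,4}\big)$ connected components, and $\Acal_5$-quotient formation defines a finite cover $B\Acal_5^M(5,2,2,2)\to\Mcal_{0,4}$ of degree $\sharp \Acal_5(5,2,2,2)$. It then remains to insert the combinatorial input already established in Proposition \ref{g1g2g3g4}: the set $\Acal_5(5,2,2,2)$ has $20$ elements, and $\Mod_{0,4}$ partitions it into exactly two orbits of size $10$ (according to whether the first generator is conjugate to $(12345)$ or to $(12354)$). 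Hence $B\Acal_5^M(5,2,2,2)$ has two connected components; since $\Mcal_{0,4}$ is connected, the restriction of the cover to a component has degree equal to the size of the corresponding $\Mod_{0,4}$-orbit, namely $10$.

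Theorem \ref{modstkcur} follows the same pattern with Theorem \ref{thm:orbifoldmoduli} in place of Theorem \ref{finemod}: $\Mk_{\Acal_5,(5,2,2,2)}$ is given by a Deligne--Mumford stack $\BB\Acal_5(5,2,2,2)$ whose coarse moduli space $B\Acal_5(5,2,2,2)$ is a quasi-projective variety with $\sharp\big(\Acal_5(5,2,2,2)/\Mod(5,2,2,2)\big)$ connected components, and $\Acal_5$-quotient formation yields a finite cover $\BB\Acal_5(5,2,2,2)\to\MM_0(5,2,2,2)$ of stacks, resp. $B\Acal_5(5,2,2,2)\to\Mcal_0(5,2,2,2)$ of coarse spaces, of degree $\sharp\Acal_5(5,2,2,2)=20$. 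Again Proposition \ref{g1g2g3g4} supplies the count: $\Mod(5,2,2,2)$ also has exactly two orbits on $\Acal_5(5,2,2,2)$, so there are two connected components. I would also record, via Remark \ref{rmk:DM} and Proposition \ref{howCkhapp}, why a stack is genuinely necessary here: the curve $C_K$ carries an $\Acal_5$-automorphism of order three, which by Lemma \ref{noauto} must permute its three $\mu_2$-orbits nontrivially, so $B\Acal_5(5,2,2,2)$ cannot carry a universal family.

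I do not anticipate a real obstacle: essentially all the work has already been done, in Section \ref{CGOM} and in the combinatorial Lemmas \ref{g1g2}, \ref{g3g4} and Proposition \ref{g1g2g3g4}. The one point that must be argued rather than merely cited is the functor identification of the first paragraph --- that ``smooth genus $10$ curve with faithful $\Acal_5$-action'' is the same datum as ``smooth connected $\Acal_5$-cover of an orbifold of type $(0;5,2,2,2)$'' --- but this is precisely Proposition \ref{quta55222} together with the Riemann--Hurwitz computation above. Once that is in place, Theorems \ref{modspcursec} and \ref{modstkcur} are a formal consequence of Theorems \ref{finemod} and \ref{thm:orbifoldmoduli}.
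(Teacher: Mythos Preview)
Your proposal is correct and follows essentially the same approach as the paper: cite Proposition \ref{g1g2g3g4} for the combinatorial counts ($\sharp\Acal_5(5,2,2,2)=20$, two $\Mod_{0,4}$- and two $\Mod(5,2,2,2)$-orbits) and then invoke Theorems \ref{finemod} and \ref{thm:orbifoldmoduli}. Your write-up is in fact more complete than the paper's one-sentence proof, since you make explicit the functor identification via Proposition \ref{quta55222} and the converse Riemann--Hurwitz computation, and you add the observation (using Proposition \ref{howCkhapp} and Remark \ref{rmk:DM}) that $C_K$ forces the stack structure to be nontrivial.
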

\begin{proof}
  {The Proposition \ref{g1g2g3g4} implies that the set $\Acal_5(5,2,2,2)$ has 20 elements which is divided into two $\Mod_{0,4}$-orbit. Then the Theorem \ref{finemod} and Theorem \ref{thm:orbifoldmoduli} implies the two results.}
\end{proof}
\begin{rmk}
  {The two components of the moduli space $B\Acal_5^M(5,2,2,2)$ resp. $B\Acal_5(5,2,2,2)$ are distinguished by the isomorphism class of the representation of $\Acal_5$ on $H^0(C,\omega_C)$ where $C$ is a smooth genus 10 $\Acal_5$-curve. And the two components are exchanged by the outer automorphism of $\Acal_5$.}
\end{rmk}

\begin{proof}[Proof of Theorem \ref{main1}]
   The fine moduli space $\Mcal_{10;4}^{\Acal_5}$ appearing in the statement of Theorem \ref{main1} is of course our ${B\mathcal{A}_5^M(5,2,2,2)}$. Then Theorems \ref{modspcursec}, \ref{modstkcur}, \ref{smfam}, \ref{universal} and Proposition \ref{deg} have established most parts of Theorem. The Proposition \ref{howCkhapp} implies $C_K$ is the only smooth $\Acal_5$-curve with non-trivial $\Acal_5$-automorphisms. Hence $\Scal_3$ action on $\Mcal_{10;4}^{\Acal_5}$ has only one irregular orbit.

We only need to prove that this orbit space $\bar{B}\mathcal{A}_5(5,2,2,2)={\bar{B}\mathcal{A}_5^M(5,2,2,2)}/\Scal_3$ is the Winger pencil. Let us first consider the smooth part ${B}\mathcal{A}_5(5,2,2,2)=B\mathcal{A}_5^M(5,2,2,2)/\Scal_3$. Using the construction in the proof of Theorem \ref{finemod} we can construct the "universal" family $\Cs^{\circ}\to B\Acal_5(5,2,2,2)\backslash[C_K]$ of smooth 10 $\Acal_5$-curves which missing $C_K$ only. Note that $B\Acal_5(5,2,2,2)\backslash[C_K]$ has dimension one. The univsersal property implies that there exist a map $\Bs^{\circ}\to B\Acal_5(5,2,2,2)\backslash[C_K]$ such that $\Ws^{\circ}$ is the pullback of $\Cs^{\circ}$ from this map.
  Now Theorem \ref{universal} shows that Winger pencil is locally universal at its smooth members and this pencil also has a dimension one base. The map $\Bs^{\circ}\to B\Acal_5(5,2,2,2)\backslash[C_K]$ must be an isomorphism. Hence by passing to the Deligne-Mumford compactification and adding $C_K$, we get the whole Winger pencil. This finishes the proof.
\end{proof}
\bibliographystyle{./spmpsci}

\bibliography{./Reference}

\end{document}